\newtheorem{prop}{Proposition}[section]
\newtheorem{thm}{Theorem}[section]
\newtheorem{cor}{Corollary}[section]
\newtheorem{lem}[thm]{Lemma}
\theoremstyle{definition}
\newtheorem{dfn}{Definition}[section]
\newtheorem{ex}{Example}
\newtheorem*{clm}{Claim}
\theoremstyle{remark}
\newtheorem*{rmk}{Remark}
\theoremstyle{remark}
\theoremstyle{remark}
\newcounter{alphasect}
\def\alphainsection{0}
\let\oldsection=\section
\def\section{%
  \ifnum\alphainsection=1%
    \addtocounter{alphasect}{1}
  \fi%
\oldsection}%
\renewcommand\thesection{%
  \ifnum\alphainsection=1%
    \Alph{alphasect}
  \else%
    \arabic{section}
  \fi%
}%
\newenvironment{alphasection}{%
  \ifnum\alphainsection=1%
    \errhelp={Let other blocks end at the beginning of the next block.}
    \errmessage{Nested Alpha section not allowed}
  \fi%
  \setcounter{alphasect}{0}
  \def\alphainsection{1}
}{%
  \setcounter{alphasect}{0}
  \def\alphainsection{0}
}%
\newcommand{\G}         {\mathcal{G}}
\newcommand{\h}         {\mathcal{H}}
\newcommand{\K}         {\mathcal{K}}
\newcommand{\yg} {\tilde y\left(\G\right)}
\newcommand{\ps}        {\operatorname{Set}^{\C^{op}}}
\newcommand{\pst}        {Gpd^{\C^{op}}}
\newcommand{\Prst} {\mathfrak{Pres}St_J\left(\T\right)}
\newcommand{\Zt}{\raise0.7ex\hbox{$\mathbb{Z}$} \!\mathord{\left/
 {\vphantom {\mathbb{Z} {2\mathbb{Z}}}}\right.\kern-\nulldelimiterspace}
\!\lower0.7ex\hbox{${2\mathbb{Z}}$}}
\newcommand{\holim}{\operatorname{holim}}
\newcommand{\hl}        {\underset{{\longleftarrow\!\!\!-\!\!\!-\!\!\!-\!\!\!-\!\!\!-\!\!\!-} } \holim  \:}
\newcommand{\hc}        {\underset{{-\!\!\!-\!\!\!-\!\!\!-\!\!\!-\!\!\!-\!\!\!\longrightarrow}} \holim  \:}
\newcommand{\Hom}{\operatorname{Hom}}
\newcommand{\Map}{\mathbb{M}\!\operatorname{ap}}
\newcommand{\St}{\operatorname{St}}
\newcommand{\Set}{\operatorname{Set}}
\def\X{\mathscr X}
\def\Y{\mathscr Y}
\def\Z{\mathscr Z}
\def\C{\mathscr C}
\def\Ee{\mathbb{E}}
\newcommand{\T} {\mathbb{TOP}}
\newcommand{\ch} {\mathbb{CH}}
\newcommand{\cg} {\mathbb{CGH}}
\newcommand{\cgr} {\mathbb{CG}}
\newcommand{\cgh} {\mathbb{CGH}}
\newcommand{\kp}   {\mathbb{CH}}
\newcommand{\lk}   {\mathbb{LCH}}
\newcommand{\Haus} {\mathbb{HAUS}}
\newcommand{\ju} {j^{*}}
\newcommand{\jd} {j_{*}}
\newcommand{\js} {j_{!}}
\newcommand{\Skp} {\operatorname{Set}^{{\kp}^{op}}}
\newcommand{\Sg} {\operatorname{Set}^{{\cg}^{op}}}
\newcommand{\Psg} {Gpd^{{\cg}^{op}}}
\newcommand{\Psk} {Gpd^{{\kp}^{op}}}
\newcommand{\yk} {y_{\kp}}
\newcommand{\ycg} {y_{\cg}}
\newcommand{\cgt} {\mathscr{CG}}
\newcommand{\kpt} {\mathscr{K}}
\newcommand{\Sgs} {Sh_{\cgt}\left(\cg\right)}
\newcommand{\Stcg} {St_{\cgt}\left(\cg\right)}
\newcommand{\Stk} {St\left(\kp\right)}
\newcommand{\Stc} {St\left(\cg\right)}
\newcommand{\Tst} {\mathfrak{TSt}}
\newcommand{\Ctst} {\mathscr{CG}\mathfrak{TSt}}
\newcommand{\QCtst} {\mathfrak{QuasiTSt}}
\newcommand{\rt} { \rotatebox[origin=C]{90}{$\perp$} }
\newcommand{\rightt} {\: \rt \:}
\def\C{\mathscr C}
\def\X{\mathscr X}
\def\Y{\mathscr Y}
\def\Z{\mathscr Z}
\DeclareMathAlphabet{\scr}{OT1}{pzc}%
                                 {m}{it}
\def\rrrarrow{\hspace{.05cm}\mbox{\,\put(0,-3){$\rightarrow$}\put(0,1){$\rightarrow$}\put(0,5){$\rightarrow$}\hspace{.45cm}}}
\def\rrarrow{  \hspace{.05cm}\mbox{\,\put(0,-2){$\rightarrow$}\put(0,2){$\rightarrow$}\hspace{.45cm}}}
\def\acts{\hspace{.1cm}{\setlength{\unitlength}{.30mm}\linethickness{.09mm}
                        \begin{picture}(8,8)(0,0)\qbezier(7,6)(4.5,8.3)(2,7)\qbezier(2,7)(-1.5,4)(2,1)\qbezier(2,1)(4.5,-.3)(7,2)
                                                 \qbezier(7,6)(6.1,7.5)(6.8,9)\qbezier(7,6)(5,6.1)(4.2,4.4)
                        \end{picture}\hspace{.1cm}}}
\def\acted{\hspace{.1cm}{\setlength{\unitlength}{.30mm}\linethickness{.09mm}
                        \begin{picture}(8,8)(0,0)\qbezier(1,6)(3.5,8.3)(6,7)\qbezier(6,7)(9.5,4)(6,1)\qbezier(6,1)(3.5,-.3)(1,2)
                                                 \qbezier(1,6)(1.9,7.5)(1.2,9)\qbezier(1,6)(3,6.1)(3.8,4.4)
                        \end{picture}\hspace{.1cm}}}
\newlength{\ignora}
\def\dontshow#1{}
\begin{document}

\title{Compactly Generated Stacks: A Cartesian Closed Theory of Topological Stacks}
\author{David Carchedi}

\begin{abstract}
A convenient bicategory of topological stacks is constructed which is both complete and Cartesian closed. This bicategory, called the bicategory of compactly generated stacks, is the analogue of classical topological stacks, but for a different Grothendieck topology. In fact, there is an equivalence of bicategories between compactly generated stacks and those classical topological stacks which admit locally compact Hausdorff atlases. Compactly generated stacks are also equivalent to a bicategory of topological groupoids and principal bundles, just as in the classical case. If a classical topological stack and a compactly generated stack have a presentation by the same topological groupoid, then they restrict to the same stack over locally compact Hausdorff spaces and are homotopy equivalent.
\end{abstract}

\maketitle
\markboth{David Carchedi}{Compactly Generated Stacks}


\tableofcontents

\section{Introduction}

The aim of this paper is to introduce the bicategory of compactly generated stacks. Compactly generated stacks are ``essentially the same'' as topological stacks, however, their associated bicategory is Cartesian closed and complete, whereas the bicategory of topological stacks appears to enjoy neither of these properties. In this paper, we show that these categorical shortcomings can be overcame by refining the open cover Grothendieck topology to take into account compact generation.


It is well known that the category of topological spaces is not well behaved. In particular, it is not Cartesian closed. Recall that if a category $\C$ is Cartesian closed then for any two objects $X$ and $Y$ of $\C$, there exists a mapping object $\Map\left(X,Y\right),$ such that for every object $Z$ of $\C$, there is a natural isomorphism $$\Hom\left(X,\Map\left(X,Y\right)\right) \cong \Hom\left(Z\times X,Y\right).$$ The category of topological spaces is not Cartesian closed; one can topologize the set of maps from $X$ to $Y$ with the compact-open topology, but this space will not always satisfy the above universal property. In a 1967 paper \cite{Steen}, Norman Steenrod set forth compactly generated Hausdorff spaces as a convenient category of topological spaces in which to work. In particular, compactly generated spaces are Cartesian closed. Though technical in nature, history showed this paper to be of great importance; it is now standard practice to work within the framework of compactly generated spaces.

Unfortunately, topological stacks are not as nicely behaved as topological spaces, even when considering only those associated to compactly generated Hausdorff topological groupoids. The bicategory of topological stacks is deficient in two ways as it appears to be neither complete, nor Cartesian closed \cite{NoohiH},\cite{NoohiM}; that is, mapping stacks need not exist. Analogously to the definition of mapping spaces, if $\X$ and $\Y$ are two topological stacks, a mapping stack $\Map\left(\X,\Y\right)$ (if it exists), would be a topological stack such that there is a natural equivalence of groupoids $$\Hom\left(\Z,\Map\left(\X,\Y\right)\right) \simeq \Hom\left(\Z \times \X,\Y\right),$$ for every topological stack $\Z$. The mapping stack $\Map\left(\X,\Y\right)$ always exists as an abstract stack, but it may not be a \emph{topological} stack, so we may not be able to apply all the tools of topology to it. This problem can be fixed however, as there exists a more well behaved bicategory of topological stacks, which we call ``compactly generated stacks'', which is Cartesian closed and complete as a bicategory. This bicategory provides the topologist with a convenient bicategory of topological stacks in which to work. The aim of this paper is to introduce this theory.

The study of mapping stacks has been done in many different settings. In the algebraic world, Masao Aoki and Martin Olsson studied the existence of mapping stacks between algebraic stacks in \cite{Aoki}, and \cite{Olsson} respectively. The special case of differentiable maps between orbifolds has been studied by Weimin Chen in \cite{Chen}, and is restricted to the case where the domain orbifold is compact. Andr\'e Haefliger has studied the case of smooth maps between \'etale Lie groupoids (which correspond to differentiable stacks with an \'etale atlas) in \cite{AndreH}. In \cite{Loop}, Ernesto Lupercio and Bernardo Uribe showed that the free loop stack (the stack of maps from $S^1$ to the stack) of an arbitrary topological stack is again a topological stack. In \cite{NoohiM}, Behrang Noohi addressed the general case of maps between topological stacks. He showed that under a certain compactness condition on the domain stack, the stack of maps between two topological stacks is a topological stack, and if this compactness condition is replaced with a local compactness condition, the mapping stack is ``not very far'' from being topological.


In order to obtain a Cartesian closed bicategory of topological stacks, we first restrict to stacks over a Cartesian closed subcategory of the category $\T$ of all topological spaces. For instance, all of the results of \cite{NoohiM} about mapping stacks are about stacks over the category of compactly generated spaces with respect to the open cover Grothendieck topology. We choose to work over the category of compactly generated Hausdorff spaces ($\cgh$) since, in addition to being Cartesian closed, every compact Hausdorff space is locally compact Hausdorff, which is crucial in defining the compactly generated Grothendieck topology.

There are several equivalent ways of describing compactly generated stacks. The description that substantiates most clearly the name ``compactly generated'' is the description in terms of topological groupoids and principal bundles. Recall that the bicategory of topological stacks is equivalent to the bicategory of topological groupoids and principal bundles. Classically, if $X$ is a topological space and $\G$ is a topological groupoid, the map $\pi$ of a (left) principal $\G$-bundle $P$ over $X$

$$\xymatrix @R=2pc @C=0.15pc {\G_1  \ar@<+.7ex>[d] \ar@<-.7ex>[d] & \acts & P \ar_{\mu}[lld] \ar^{\pi}[d] \\
\G_0 && X}$$
must admit local sections. If instead $\pi$ only admits local sections \emph{over each compact subset of $X,$} then one arrives at the definition of a compactly generated principal bundle. With this notion of compactly generated principal bundles, one can define a bicategory of topological groupoids in an obvious way. This bicategory is equivalent to compactly generated stacks.

There is another simple way of defining compactly generated stacks. Given any stack $\X$ over the category of compactly generated Hausdorff spaces, it can be restricted to the category of compact Hausdorff spaces $\ch$. This produces a 2-functor $$\ju:\Tst \to St \left(\ch\right)$$ from the bicategory of topological stacks to the bicategory of stacks over compact Hausdorff spaces. Compactly generated stacks are (equivalent to) the essential image of this 2-functor.

Finally, the simplest description of compactly generated stacks is that compactly generated stacks are classical topological stacks (over compactly generated Hausdorff spaces) which admit a locally compact Hausdorff atlas. In this description, the mapping stack of two spaces is usually not a space, but a stack!

For technical reasons, neither of the three previous concepts of compactly generated stacks are put forth as the definition. Instead, a Grothendieck topology $\cgt$ is introduced on the category $\cg$ of compactly generated Hausdorff spaces which takes into account the compact generation of this category. It is in fact the Grothendieck topology induced by geometrically embedding the topos $Sh \left(\ch\right)$ of sheaves over $\ch$ into the topos of presheaves $\Sg$. Compactly generated stacks are defined to be presentable stacks (see Definition \ref{dfn:presentable}) with respect to this Grothendieck topology. The equivalence of all four notions of compactly generated stacks is shown in Section \ref{sec:tcgs}.

\subsection{Why are Compactly Generated Hausdorff Spaces Cartesian Closed?}\label{subsec:cgh}
In order to obtain a Cartesian closed bicategory of topological stacks, we start with a Cartesian closed category of topological spaces. We choose to work with the aforementioned category $\cg$ of compactly generated Hausdorff spaces (also known as Kelley spaces).

\begin{dfn}\index{compactly generated space}
A topological space $X$ is \textbf{compactly generated} if it has the final topology with respect to all maps into $X$ with compact Hausdorff domain. When $X$ is Hausdorff, this is equivalent to saying that a subset $A$ of $X$ is open if and only if its intersection which every compact subset of $X$ is open.
\end{dfn}

The inclusion $v:\cg \hookrightarrow \Haus$ of the category of compactly generated Hausdorff spaces into the category of Hausdorff spaces admits a right adjoint $k$, called the Kelley functor, which replaces the topology of a space $X$ with the final topology with respect to all maps into $X$ with compact Hausdorff domain. Limits in $\cgh$ are computed by first computing the limit in $\Haus$, and then applying the Kelley functor. (In this way the compactly generated product topology differs from the ordinary product topology). In particular, $\cgh$ is a complete category.

Although the fact that this category is Cartesian closed is a classical result (see: \cite{Steen}), we will recall briefly the key reasons why this is true in order to gain insight into how one could construct a Cartesian closed theory of topological stacks.

\begin{enumerate}
	\item In $\mathbb{TOP}$, if $K$ is compact Hausdorff, then for any space $X$, the space of maps endowed with the compact-open topology serves as an exponential object $\Map\left(K,X\right)$.
	\item A Hausdorff space $Y$ is compactly generated if and only if it is the colimit of all its compact subsets:
	$$Y = \underset{K_\alpha \hookrightarrow Y} \varinjlim K_\alpha.$$
\end{enumerate}

\begin{itemize}
	\item[i)] For a fixed $Y$, for all $X$, $\Map\left(K_\alpha,X\right)$ exists for each compact subset $K_\alpha$ of $Y$.
	\item[ii)] $\cgh$ has all limits
\end{itemize}

So by general properties of limits and colimits, the space

$$\Map\left(Y,X\right):=\underset{K_\alpha \hookrightarrow Y} \varprojlim \Map\left(K_\alpha,X\right)\footnote{Technically, some of the spaces $\Map\left(K_\alpha,X\right)$ may be fail to be compactly generated, so we must actually take the limit after applying the Kelley functor to each of these mapping spaces.}$$
is a well defined exponential object (with the correct universal property).

The story starts the same for topological stacks:\\

Let $Y$ be as above and let $\X$ be a topological stack. Then $\Map\left(K_\alpha,\X\right)$ is a topological stack for each compact subset $K_\alpha \subset Y$ (see \cite{NoohiM}).

One might therefore be tempted to claim:

$$\Map\left(Y,\X\right):= \underset{K_\alpha \hookrightarrow Y} \hl \Map\left(K_\alpha,\X\right),$$
but there are some problems with this. First of all, this weak limit may not exist as a topological stack, since topological stacks are only known to have finite weak limits. There is also a more technical problem related to the fact that the Yoneda embedding does not preserve colimits (see Section \ref{sec:cgtop} for details). The main task of this paper is to show that both of these difficulties can be surmounted by using a more suitable choice of Grothendieck topology on the category of compactly generated Hausdorff spaces. The resulting bicategory of presentable stacks with respect to this topology will be the bicategory of compactly generated stacks and turn out to be both Cartesian closed and complete.

\subsection{Organization and Main Results}

Section \ref{sec:rec} is a review of some recent developments in topological groupoids and topological stacks, including some results of David Gepner and Andr\'e Henriques in \cite{Andre} which are crucial for the proof of the completeness and Cartesian closedness of compactly generated stacks. In this section, we also extend Behrang Noohi's results to show that the mapping stack of two topological stacks is topological if the domain stack admits a compact Hausdorff atlas and ``nearly topological'' if the domain stack admits a locally compact Hausdorff atlas.

Section \ref{sec:cgtop} details the construction of the compactly generated Grothendieck topology $\cgt$ on the category of compactly generated Hausdorff spaces $\cgh$. This is the Grothendieck topology whose associated presentable stacks are precisely compactly generated stacks. Many properties of the associated categories of sheaves and stacks are derived.

Section \ref{sec:cgst} is dedicated to compactly generated stacks. In Section \ref{sec:tcgs}, it is shown that compactly generated stacks are equivalent to two bicategories of topological groupoids. Also, it is shown that these are in turn equivalent to the restriction of topological stacks to compact Hausdorff spaces. Finally, it is shown that compactly generated stacks are equivalent to ordinary topological stacks (over compactly generated Hausdorff spaces) which admit a locally compact Hausdorff atlas.

Section \ref{sec:tcgs} also contains one of the main results of the paper:

\begin{thm}
The bicategory of compactly generated stacks is closed under arbitrary small weak limits.
\end{thm}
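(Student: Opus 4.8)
The plan is to prove that the fully faithful inclusion $\Ctst\hookrightarrow\Stcg$ of compactly generated stacks into the $2$-category of all stacks on $(\cg,\cgt)$ creates small weak limits. I would first record two generalities. On the one hand, $\Stcg$ possesses all small weak (bi)limits, computed object-wise: weak limits of presheaves of groupoids exist object-wise since the $2$-category of groupoids is complete, a weak limit of stacks is again a stack (descent being a limit condition), and the inclusion $\Stcg\hookrightarrow\Psg$, being right adjoint to stackification, preserves weak limits. Hence for any small $2$-diagram $D\colon I\to\Ctst$ the weak limit $\mathcal W:=\operatorname{wlim}_I D$ exists as a stack, and the whole content of the theorem is that $\mathcal W$ is presentable. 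On the other hand, by the standard analysis of weighted weak limits --- every small weak limit being built from small products by a finite weak limit construction on $\prod_{i}D(i)$ and $\prod_{\phi\colon i\to j}D(j)$ --- it suffices to treat two cases separately: (a) finite weak limits, and (b) arbitrary small products.

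Case (a) I would treat exactly as for classical topological stacks. Given presentations $p_i\colon X_i\to\X_i$, one uses that each $p_i$ is representable, that $\cg$ is complete (hence has all finite limits), and that ``representable'' and ``$\cgt$-covering'' are each stable under pullback and under composition; an atlas on a finite weak limit is then produced by the usual iterated fibre products (for a weak pullback $\X\times_\Z\Y$, one forms the fibre product $X\times_\Z Z\times_\Z Y$ of the three atlases over $\Z$ and checks it is a space), with routine verifications that the resulting map is a representable effective epimorphism; iso-inserters and equifiers are analogous and easier.

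Case (b) is where the genuine difficulty lies, and I expect it to be the main obstacle. The naive candidate atlas $\prod_i X_i\to\prod_i\X_i$ (the domain lies in $\cg$ by completeness, and the map is representable by the same fibre-product argument as in (a)) is \emph{not} in general an effective epimorphism for $\cgt$: pulled back to a compact Hausdorff test space $T$, one would need a single open cover of $T$ simultaneously trivialising every $T\times_{\X_i}X_i\to T$, but the common refinement of the individual (finite) trivialising covers is an infinite intersection of opens and need not be open. The way around this is to choose the atlases more carefully, as \emph{classifying-space atlases}: present $\X_i$ by a topological groupoid $\G_i$ with $\G_{i,0},\G_{i,1}\in\cg$, let $B_i\in\cg$ be its classifying space $|N\G_i|$, and let $B_i\to\X_i$ be classified by the universal $\G_i$-bundle. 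The crucial property --- this is precisely the classifying-space theory of topological groupoids of Gepner--Henriques \cite{Andre}, and the reason \S\ref{sec:rec} extends Noohi's results to stacks with CW atlases --- is that any $\G_i$-bundle over a space carrying a numerable cover is classified by a map into $B_i$, i.e.\ lifts globally through $B_i\to\X_i$. Now the $\G_i$-bundle $P_i\to T$ classified by $T\to\X_i$ is $\cgt$-locally trivial over the \emph{compact Hausdorff}, hence normal, space $T$, so it is trivial over a finite open cover, hence numerable, hence classified by a map $T\to B_i$; equivalently $T\times_{\X_i}B_i\to T$ has a \emph{global} section, and this holds for every $i$ over all of $T$. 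Consequently $\prod_i^{T}(T\times_{\X_i}B_i)\to T$ has a global section, so in particular is a $\cgt$-cover; since every object of $\cg$ is covered (tautologically) by its compact Hausdorff subspaces, it follows that $\prod_i B_i\to\prod_i\X_i$ is an effective epimorphism, and it is representable as before, so it is an atlas and $\prod_i\X_i$ is presentable.

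The hard part is thus case (b), and within it the input that every compactly generated stack admits a classifying-space atlas with the numerable-lifting property above --- this is exactly where the Gepner--Henriques machinery is essential, and where the mild niceness of the presenting groupoid enters (secured via Morita-invariance of classifying spaces and the CW-atlas discussion of \S\ref{sec:rec}); the representability and effective-epimorphism checks for such atlases are routine. Combining (a) and (b), $\Ctst$ is closed under all small weak limits, which is the theorem.
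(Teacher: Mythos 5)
Your proposal is correct, and it rests on the same two pillars as the paper: the reduction of arbitrary small weak limits to finite weak limits plus arbitrary small products (the paper cites Street and Fiore for exactly this), and the Gepner--Henriques classifying-bundle machinery as the essential input for infinite products. Where you differ is in execution. The paper's proof of Theorem \ref{thm:allprod} never exhibits an atlas or checks an epimorphism condition directly: it replaces each presenting groupoid $\G_i$ by its fibrant replacement, uses Theorem \ref{thm:imp} to identify $\left[\G_i\right]_{\cgt}$ with $\jd \tilde y_{\kp}\left(\G_i\right)$ (compact Hausdorff spaces being paracompact, no stackification is needed over $\kp$), and then argues formally that $\tilde y_{\kp}$ and the right 2-adjoint $\jd$ preserve weak limits, so $\prod_i \X_i \cong \jd \tilde y_{\kp}\left(\prod_i \G_i\right)$, whose $\cgt$-stackification is $\left[\prod_i \G_i\right]_{\cgt}$. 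You instead verify by hand that $\prod_i \left\|\G_i\right\| \to \prod_i \X_i$ is a representable $\cgt$-epimorphism, using universality of $\left\|\E\G_i\right\| \to \left\|\G_i\right\\|$ over paracompact bases, the fact that a $\cgt$-principal bundle over a compact Hausdorff space is an ordinary one (Proposition \ref{prop:cgpb}), and that the compact subsets of any test space form a $\cgt$-cover. Since $Fib\left(\G_i\right)$ is the gauge groupoid of $\left\|\E\G_i\right\|$, its object space is $\left\|\G_i\right\|$, so your atlas is exactly the one implicit in the paper's presentation by $\prod_i Fib\left(\G_i\right)$: the two arguments are the same idea packaged differently, yours more concrete and bundle-theoretic (at the cost of the deferred representability and atlas checks, which do go through), the paper's shorter and more formal, with the bundle theory absorbed into Theorem \ref{thm:imp}. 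Your diagnosis of why a naive product of arbitrary atlases fails (no common refinement of infinitely many trivializing covers) is precisely the motivation for passing to fibrant presentations, and your treatment of finite weak limits is the atlas-level counterpart of the paper's remark that $\cg Gpd$ has weak fibred products and $St_{\cgt}$ preserves finite weak limits.
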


(See Corollary \ref{cor:alllimits}).

Section \ref{sec:map} is dedicated to the proof of \emph{the} main result of the paper:

\begin{thm}
If $\X$ and $\Y$ are arbitrary compactly generated stacks, then $\Map\left(\Y,\X\right)$ is a compactly generated stack.
\end{thm}

(See Theorem \ref{thm:carclos}).

This of course proves that classical topological stacks (over compactly generated Hausdorff spaces) which admit a locally compact Hausdorff atlas form a Cartesian closed and complete bicategory. We also give a concrete description of a topological groupoid presentation for the mapping stack of two compactly generated stacks.

Section \ref{sec:homotopytype} uses techniques developed in \cite{NoohiH} to assign to each compactly generated stack a weak homotopy type.

Finally, in section \ref{sec:comp}, there is a series of results showing how compactly generated stacks are ``essentially the same'' as topological stacks. In particular we extend the construction of a weak homotopy type to a wider class of stacks which include all topological stacks and all compactly generated stacks, so that their corresponding homotopy types can be compared.

For instance, the following theorems are proven:

\begin{thm}
For every topological stack $\X,$ there is a canonical compactly generated stack $\bar \X$ and a map $$\X \to \bar \X$$ which induces an equivalence of groupoids
$$\X\left(Y\right) \to \bar\X\left(Y\right)$$
for all locally compact Hausdorff spaces $Y$. Moreover, this map is a weak homotopy equivalence.
\end{thm}

(See Corollary \ref{prop:comp1}).

\begin{thm}
Let $\X$ and $\Y$ be topological stacks such that $\Y$ admits a locally compact Hausdorff atlas. Then $\Map\left(\Y,\X\right)$ is ``nearly topological'', $\Map\left(\bar\Y,\bar\X\right)$ is a compactly generated stack, and there is a canonical weak homotopy equivalence
$$\Map\left(\Y,\X \right) \to \Map\left(\bar \Y,\bar\X\right).$$
Moreover, $\Map\left(\Y,\X \right)$ and $\Map\left(\bar \Y,\bar\X\right)$ restrict to the same stack over locally compact Hausdorff spaces.
\end{thm}

(See Theorem \ref{thm:hommap}).

We end this paper by showing in what way compactly generated stacks are to topological stacks what compactly generated spaces are to topological spaces:

Recall that there is an adjunction

$$\xymatrix@C=1.5cm{\cgr \ar@<-0.5ex>[r]_-{v} & \T \ar@<-0.5ex>[l]_-{k},}$$
exhibiting compactly generated spaces as a co-reflective subcategory of the category of topological spaces, and for any space $X,$ the co-reflector $$vk\left(X\right) \to X$$ is a weak homotopy equivalence.

We prove the $2$-categorical analogue of this statement:

\begin{thm}
There is a $2$-adjunction
$$\xymatrix@C=1.5cm{\Ctst \ar@<-0.5ex>[r]_-{v} & \mathfrak{\Tst} \ar@<-0.5ex>[l]_-{k},}$$
$$v \rt \mspace{2mu} k,$$
exhibiting compactly generated stacks as a co-reflective sub-bicategory of topological stacks, and for any topological stack $\X$, the co-reflector  $$vk\left(\X\right) \to \X$$ is a weak homotopy equivalence. A topological stack is in the essential image of the $2$-functor $$v:\Ctst \to \Tst$$ if and only if it admits a locally compact Hausdorff atlas.
\end{thm}

(See Theorem \ref{thm:kellyref}.)

\vspace{0.2in}\noindent{\bf Acknowledgment:} This paper was written during my studies as a Ph.D. student at Utrecht University. I would like to thank my adviser Ieke Moerdijk for his guidance and patience; it was due to him that I came to consider the question of the presentability of the mapping stack of two topological stacks, starting first through the language of topological groupoids and \'etendues. I would also like to thank Marius Crainic for sparking my interest, in the first place, in topological groupoids and Lie groupoids in the form of a very interesting seminar as part of an MRI masterclass. I would very much like to thank Andr\'e Henriques as well for countless hours of useful mathematical discussions and suggestions. I would like to thank Andr\'e Haefliger for taking the time to explain to me his work on maps between \'etale Lie groupoids during a visit to Gen\`eve. I would also like to thank Thomas Goodwillie for help with a homotopy-theoretic proof, and Behrang Noohi for his helpful e-mail correspondence. Finally, I would like to thank the referees of this paper whose comments have been quite helpful.

 \newpage

\section{Topological Stacks}\label{sec:rec}
A review of the basics of topological groupoids and topological stacks including many notational conventions used in this section can be found in Appendix \ref{app:topst}.

\begin{rmk}
In this paper, we will denote the bicategory of topological stacks by $\Tst.$
\end{rmk}

\subsection{Fibrant Topological Groupoids}
The notion of fibrant topological groupoids was introduced in \cite{Andre}. Roughly speaking, fibrant topological groupoids are topological groupoids which ``in the eyes of paracompact Hausdorff spaces are stacks.'' The fact that every topological groupoid is Morita equivalent to a fibrant one is essential to the existence of arbitrary weak limits of compactly generated stacks. Since this concept is relatively new, in this subsection, we summarize the basic facts about fibrant topological groupoids. All details may be found in \cite{Andre}.

\begin{dfn}
The \textbf{classifying space} of a topological groupoid $\G$ is the fat geometric realization of its simplicial nerve (regarded as a simplicial space) and is denoted by $\left\|\G\right\|$.
\end{dfn}

For any topological groupoid $\G$, the classifying space of its translation groupoid $\left\|\Ee\G\right\|$ (see Definition \ref{dfn:tran})  admits the structure of a principal $\G$-bundle over the classifying space $\left\|\G\right\|$.

\begin{dfn}\cite{Andre}
Let $\G$ be a topological groupoid. A principal $\G$-bundle $E$ over a space $B$ is \textbf{universal} if every principal $\G$-bundle $P$ over a paracompact Hausdorff base $X$ admits a $\G$-bundle map

$$\xymatrix{P\ar[r]\ar[d]&E\ar@<-.5ex>[d]\\
X\ar[r]&B,}$$
unique up to homotopy.
\end{dfn}

\begin{lem}\cite{Andre}
The principal $\G$-bundle

$$\xymatrix @R=2pc @C=0.15pc {\G_1  \ar@<+.7ex>[d] \ar@<-.7ex>[d] & \acts & \left\|\Ee\G\right\| \ar_{\mu}[lld] \ar^{\pi}[d] \\
\G_0 && \left\|\G\right\|}$$
is universal.
\end{lem}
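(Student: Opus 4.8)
The plan is to establish universality by the classical Milnor--Segal--tom~Dieck argument, adapted to topological groupoids exactly as in \cite{Andre}: one produces a classifying $\G$-bundle map out of a numerable trivializing cover together with a subordinate partition of unity. Two things must be checked: \emph{existence} of a $\G$-bundle map from any principal $\G$-bundle $P \to X$ over a paracompact base to $\left\|\E\G\right\|$, and its \emph{uniqueness up to homotopy}. It is essential throughout that one works with the \emph{fat} realization: for an arbitrary topological groupoid the nerve $\mathrm{N}\G_\bullet$ need not be a ``good'' simplicial space, so the thin realization is homotopically pathological, whereas $\left\|\E\G\right\|$ --- glued along face maps alone --- is an ``infinite join'' type object whose fibers are highly connected, which is precisely what leaves enough room for the constructions below with no hypotheses on $\G$.

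\medskip\noindent\textbf{Existence.} Let $\pi \colon P \to X$ be a principal $\G$-bundle over a paracompact base; such a bundle admits local sections, so fix a locally finite trivializing cover $\{U_i\}_{i \in I}$ with a total order on $I$, local sections $\sigma_i \colon U_i \to P$, and a subordinate partition of unity $\{\phi_i\}$. Put $\tau_i := \mu \circ \sigma_i \colon U_i \to \G_0$; over $U_i \cap U_j$ the sections $\sigma_i,\sigma_j$ differ by a unique arrow, giving cocycle data $g_{ij} \colon U_i \cap U_j \to \G_1$ with $s \circ g_{ij} = \tau_j$, $t \circ g_{ij} = \tau_i$, $g_{ij}\cdot\sigma_j = \sigma_i$, and $g_{ij}g_{jk} = g_{ik}$ on triple overlaps. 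For $x \in X$ write its (finite, by local finiteness) support set as $\{i_0 < \dots < i_k\}$ and send
$$x \longmapsto \bigl[\,(\phi_{i_0}(x),\dots,\phi_{i_k}(x)) ;\, (g_{i_0 i_1}(x), \dots, g_{i_{k-1} i_k}(x))\,\bigr] \in \left\|\G\right\| .$$
The cocycle identities are exactly the face relations of the fat realization --- as a $\phi_{i_a}$ vanishes its vertex is deleted and the two adjacent arrows compose via $g_{i_{a-1}i_a}g_{i_ai_{a+1}} = g_{i_{a-1}i_{a+1}}$ --- so this is well defined; and it is continuous because near any point local finiteness confines the nonvanishing $\phi_i$ to a fixed finite set $T$ with all the $g_{ij}$, $i,j \in T$, defined there, so that the map factors through a single $\Delta^{|T|-1} \times \mathrm{N}\G_{|T|-1}$. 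To lift it to $\left\|\E\G\right\|$, recall $\mathrm{N}(\E\G)_n \cong \G_1 \times_{\G_0} \mathrm{N}\G_n$, the extra arrow recording where the chain begins; given $p \in P$ over $x$ there is a unique $h(p) \in \G_1$ with $p = h(p) \cdot \sigma_{i_0}(x)$ and $s(h(p)) = \tau_{i_0}(x)$, and we send $p \mapsto [\,(\phi_{i_0}(x),\dots,\phi_{i_k}(x)) ;\, (h(p), g_{i_0 i_1}(x), \dots, g_{i_{k-1}i_k}(x))\,]$. This is continuous by the same argument, is $\G$-equivariant because left translation by $\delta$ carries $h(p)$ to $\delta h(p)$ and fixes the rest of the tuple, covers the previous map under the forgetful projection $\left\|\E\G\right\| \to \left\|\G\right\|$, and is consistent with the $d_0$-relation when $\phi_{i_0}$ vanishes because $h(p)g_{i_0i_1}(x) \cdot \sigma_{i_1}(x) = p$.

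\medskip\noindent\textbf{Uniqueness up to homotopy.} If $(F_0, f_0)$ and $(F_1, f_1)$ are two $\G$-bundle maps $P \to \left\|\E\G\right\|$, then a homotopy between them over a homotopy $f_0 \simeq f_1$ is precisely a $\G$-bundle map $P \times [0,1] \to \left\|\E\G\right\|$ restricting to $F_0 \sqcup F_1$ over $X \times \{0,1\}$. Since $P \times [0,1] \to X \times [0,1]$ is again a principal $\G$-bundle over a paracompact base, such an extension exists by the relative form of the existence argument: one juxtaposes the two trivializing systems, keeping on $X \times \{0\}$ (resp.\ $X \times \{1\}$) the sections and cocycles of $F_0$ (resp.\ $F_1$) and, in between, \emph{all} transition arrows among sections of either family, and one damps the two partitions of unity by $1 - t$ and $t$ in the interval coordinate --- the combined functions already sum to $1$. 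That the fat realization accommodates both systems at once is the classical ``infinite join'' trick; restricting the resulting bundle map to $X \times \{t\}$ gives the required homotopy.

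\medskip\noindent\textbf{Main obstacle.} The genuine weight is the fat-realization bookkeeping: verifying that the classifying map is continuous across the strata where the set of nonvanishing $\phi_i$ jumps, and that the relative extension used for uniqueness is legitimate (one uses that $X \times \{0,1\} \hookrightarrow X \times [0,1]$ is a well-behaved closed inclusion). Both are standard for fat realizations, and I would invoke the foundational results of \cite{Andre} rather than reprove them; the only point beyond the classical group case is that $\mathrm{N}\G_\bullet$ carries no good simplicial structure, which is exactly why the \emph{fat} realization is mandatory.
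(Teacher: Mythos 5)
Two preliminary remarks: this lemma is not proved in the paper at all --- it is quoted from \cite{Andre} --- so the only comparison available is with the argument of that reference, and your plan (the Milnor/Segal partition-of-unity construction adapted to the fat realization) is indeed the same kind of argument used there. Your existence half is essentially complete and correct: the cocycle bookkeeping $g_{ij}g_{jk}=g_{ik}$, the lift to $\left\|\E\G\right\|$ via the unique arrow $h(p)$ with $p=h(p)\cdot\sigma_{i_0}(x)$, the compatibility with the face identification when $\phi_{i_0}$ vanishes, equivariance, and continuity via local finiteness are exactly the checks that need to be made, and they go through for the fat realization because the face relations are present.

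The uniqueness half, however, has a genuine gap. What your relative-extension argument proves is that the classifying maps built from two trivializing systems (cover, sections, subordinate partition of unity) are homotopic through bundle maps: juxtaposing the two index sets, using all mutual transition arrows, and damping the two partitions of unity by $1-t$ and $t$ does give a bundle map $P\times[0,1]\to\left\|\E\G\right\|$ with the two constructed maps as endpoints. But the lemma requires that \emph{any} two $\G$-bundle maps $F_0,F_1\colon P\to\left\|\E\G\right\|$ be homotopic, and an arbitrary bundle map does not come presented by ``sections and cocycles'' --- that is precisely what your phrase ``keeping on $X\times\{0\}$ the sections and cocycles of $F_0$'' silently assumes. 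In Milnor's join model for a group one closes this gap by pulling back the globally defined join coordinates $t_i$ (which are invariant, hence descend to $X$) together with the associated partial sections, so that every bundle map is literally in standard form; for a fat realization there is no canonical coordinate system indexed by a fixed set, since the simplices are parametrized by the whole spaces $\mathrm{N}\G_n$, so extracting numerable trivializing data from an arbitrary $F_0$ --- or proving that every bundle map is homotopic through bundle maps to one arising from such data --- is a genuine additional step, not fat-realization ``bookkeeping'' that can be waved off to the foundational results you cite. Until that step is supplied, your argument establishes independence of the constructed classifying map from the choices made, not universality in the sense of the statement.
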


\begin{dfn}\cite{Andre}\index{fibrant topological groupoid}
A topological groupoid $\G$ is \textbf{fibrant} if the unit principal $\G$-bundle is universal. (See Definition \ref{dfn:unitbundle}.)
\end{dfn}

\begin{dfn}\cite{Andre}
The \textbf{fibrant replacement} of a topological groupoid $\G$ is the gauge groupoid of the universal principal $\G$-bundle $\left\|\Ee\G\right\|$, denoted $Fib\left(\G\right)$. (See Definition \ref{dfn:gaugegpd}.)
\end{dfn}

\begin{rmk}
If $\G$ is compactly generated Hausdorff, then so is $Fib\left(\G\right)$.
\end{rmk}

\begin{lem}\cite{Andre}
$Fib\left(\G\right)$ is fibrant.
\end{lem}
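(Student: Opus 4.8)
The plan is to exploit the fact that $Fib\left(\G\right)$ is, by construction, Morita equivalent to $\G$, together with the principle that universality of a principal bundle is a Morita-invariant notion: the only geometry entering the definition is that of the base, which a Morita equivalence leaves untouched, and the relevant comparison is only up to homotopy. Concretely, recall the basic properties of the gauge-groupoid construction. If $\pi\colon E\to B$ with moment map $\mu\colon E\to\G_0$ is a principal $\G$-bundle, then its gauge groupoid has object space $B$ and arrow space the balanced product $E\otimes_\G E$ (one factor a right, the other a left $\G$-space over $\G_0$), with source and target the two projections to $B$; moreover $E$ itself is a bibundle realizing a Morita equivalence between the gauge groupoid and $\G$. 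Applying this to the universal principal $\G$-bundle $\left\|\E\G\right\|\to\left\|\G\right\|$ of the Lemma above, we obtain that $Fib\left(\G\right)$ has object space $\left\|\G\right\|$ and comes equipped with a Morita equivalence $Fib\left(\G\right)\simeq\G$ with bibundle $\left\|\E\G\right\|$.

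Next, recall that a Morita equivalence $\K\simeq\G$ induces, for every space $X$, an equivalence between the groupoid of principal $\K$-bundles over $X$ and the groupoid of principal $\G$-bundles over $X$, obtained by tensoring with the bibundle; this equivalence is natural in $X$, commutes with pullback along maps of base spaces, carries bundle maps to bundle maps, and carries homotopies of bundle maps to homotopies. Consequently it preserves the property of a principal bundle being universal: universality is the assertion that every principal bundle over a paracompact base admits a bundle map to it, unique up to homotopy, and the base --- in particular, its paracompactness --- is unchanged under the transfer.

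Finally, I would identify, under the Morita equivalence $Fib\left(\G\right)\simeq\G$ given by $\left\|\E\G\right\|$, the universal $\G$-bundle $\left\|\E\G\right\|\to\left\|\G\right\|$ with the unit principal $Fib\left(\G\right)$-bundle $Fib\left(\G\right)_1\to\left\|\G\right\|$. This is exactly what the gauge-groupoid description provides: transferring the $\G$-bundle $\left\|\E\G\right\|\to\left\|\G\right\|$ across the equivalence is the formation of $\left\|\E\G\right\|\otimes_\G\left\|\E\G\right\|$, which is by definition the arrow space of $Fib\left(\G\right)$, with its two canonical maps to $\left\|\G\right\|$. Combining the three steps: since $\left\|\E\G\right\|\to\left\|\G\right\|$ is universal as a $\G$-bundle, its image $Fib\left(\G\right)_1\to\left\|\G\right\|$ under the Morita transfer is universal as a $Fib\left(\G\right)$-bundle; that is, the unit principal $Fib\left(\G\right)$-bundle is universal, so $Fib\left(\G\right)$ is fibrant.

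The main obstacle is the bookkeeping in the last two steps: one must fix conventions for the bibundle-tensoring operation (including the distinction between the bibundle and its opposite, and left versus right actions), verify that this operation is genuinely compatible with bundle maps and with the homotopy relation so that ``universal'' really does transfer, and confirm that the unit bundle of a gauge groupoid is the tensor of $E$ with the transfer bibundle rather than some variant. All of this is formal once the conventions are pinned down, and it is implicit in the framework of \cite{Andre}.
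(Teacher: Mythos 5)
Your argument is correct in outline, and it is worth noting that the paper itself offers no proof of this lemma at all: it is quoted verbatim from \cite{Andre}, so there is no in-text argument to compare against. Your route --- (1) the gauge groupoid of a principal $\G$-bundle $P$ is Morita equivalent to $\G$ with $P$ itself as the equivalence bibundle, (2) tensoring with a Morita bibundle gives equivalences $Bun_{\G}\left(X\right) \simeq Bun_{Fib\left(\G\right)}\left(X\right)$ compatible with pullback, bundle maps, and homotopies of bundle maps (here one uses that the balanced product commutes with $- \times \left[0,1\right]$ and that paracompactness of the base, the only hypothesis in the definition of universality, is untouched), and (3) the transfer of $\left\|\E\G\right\|\to\left\|\G\right\|$ across this equivalence is $\left\|\E\G\right\|$ balanced with its opposite over $\G$, which is by construction the arrow space of $Fib\left(\G\right)$ with its projection to $\left\|\G\right\|$, i.e.\ the unit bundle --- is exactly the natural argument, and it is essentially how the result is established in \cite{Andre}. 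The only points requiring care are the ones you already flag: a bundle map $P \to E$ covering $f$ must be identified with an isomorphism $P \cong f^{*}E$ over the base so that ``universal'' is genuinely a statement internal to the bundle groupoids that the Morita transfer preserves, and the left/right conventions in the balanced product must be fixed so that the transferred bundle is the unit bundle (source projection) rather than its opposite. With those conventions pinned down the proof is complete.
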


\begin{lem}\cite{Andre}
There is a canonical groupoid homomorphism $$\xi_\G: \G \to Fib\left(\G\right)$$ which is a Morita equivalence for all topological groupoids $\G$.
\label{lem:fibm}
\end{lem}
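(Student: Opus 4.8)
The plan is to realise $\xi_\G$ as the inclusion of a full topological subgroupoid of $Fib(\G)$, and then to verify the two conditions characterising a Morita equivalence: that the induced square on arrow spaces is Cartesian (full faithfulness), and that a certain map to the object space admits local sections (essential surjectivity). Recall $Fib(\G)$ is the gauge groupoid of the universal principal $\G$-bundle $\pi\colon\left\|\E\G\right\|\to\left\|\G\right\|$, so its object space is $\left\|\G\right\|$ and its arrow space is $\left(\left\|\E\G\right\|\times_{\G_0}\left\|\E\G\right\|\right)/\G$ --- the fibre product over the moment map $\mu\colon\left\|\E\G\right\|\to\G_0$, with $\G$ acting diagonally and with source and target the two projections composed with $\pi$. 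The fat realisation carries a canonical map $\iota\colon\G_0\to\left\|\G\right\|$ (inclusion of the $0$-simplices), and I would take $\xi_\G$ to be the inclusion of the full subgroupoid of $Fib(\G)$ spanned by the image of $\iota$. To identify this full subgroupoid with $\G$: its arrow space is $\left(\iota^{*}\left\|\E\G\right\|\times_{\G_0}\iota^{*}\left\|\E\G\right\|\right)/\G$, and inspecting the degree-$0$ part of the realisation shows that $\iota^{*}\left\|\E\G\right\|$ is the unit principal $\G$-bundle $U_\G$ over $\G_0$ (underlying space $\G_1$); since the gauge groupoid of the unit bundle of any $\G$ is canonically $\G$ itself, one obtains a canonical isomorphism $\G\xrightarrow{\ \sim\ }Fib(\G)\big|_{\G_0}$, natural in $\G$. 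Composing it with the inclusion gives $\xi_\G$, whose object map is $\iota$.

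Since $\xi_\G$ is by construction a full-subgroupoid inclusion, the square with corners $\G_1$, $Fib(\G)_1$, $\G_0\times\G_0$, $\left\|\G\right\|\times\left\|\G\right\|$ (the vertical maps being $(s,t)$) is Cartesian, so full faithfulness is automatic, and only essential surjectivity must be checked: the map $Fib(\G)_1\,{}_{s}\!\times_{\iota}\G_0\to\left\|\G\right\|$, $(\gamma,x)\mapsto t(\gamma)$, should admit local sections. Such a $\gamma$ is a class $[p,q]$ with $q\in\pi^{-1}(\G_0)=U_\G$, and it is sent to $\pi(p)\in\left\|\G\right\|$. But $\pi$ is locally trivial --- it is the fat realisation of the simplicial map $N\E\G\to N\G$, which is degreewise a trivial principal $\G$-bundle --- so over a suitable open $V\subseteq\left\|\G\right\|$ there is a continuous section $\sigma$ of $\pi$, and pairing it with the unit section $u$ of $U_\G$ gives $b\mapsto\bigl[\sigma(b),\,u(\mu(\sigma(b)))\bigr]$, a continuous local section over $V$ (its source lies in $\G_0$ and its target is $\pi(\sigma(b))=b$). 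Hence $\xi_\G$ is a Morita equivalence; no hypothesis on $\G$ was used, so this holds for every topological groupoid.

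The substantive input is geometric rather than formal: the identification $\iota^{*}\left\|\E\G\right\|\cong U_\G$, which requires genuinely tracing the fat realisations of $N\G$ and $N\E\G$ in low degrees, and the local triviality of $\pi$; both are standard facts about classifying spaces of topological groupoids, available from \cite{Andre}. The remaining steps --- the identification $\operatorname{Gauge}(U_\G)\cong\G$, the automatic full faithfulness of a full-subgroupoid inclusion, and the assembly of local sections --- are routine. The only extra bookkeeping is that the fibre products and quotients above be formed in $\cg$ and that $Fib(\G)$ remain compactly generated Hausdorff when $\G$ is, which is exactly the preceding Remark; so the construction does not leave the ambient category.
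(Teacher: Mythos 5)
The paper does not prove this lemma at all: it is imported wholesale from \cite{Andre}, so there is no in-text argument to compare against, and your proposal has to stand on its own. On those terms it is sound, and it appears to be the same canonical map that \cite{Andre} has in mind: restricting the universal bundle $\left\|\E\G\right\|$ along the $0$-skeleton inclusion $\G_0\hookrightarrow\left\|\G\right\|$ yields the unit bundle (the point-set checks---that $\G_0$ and $\pi^{-1}\left(\G_0\right)=\G_1$ are embedded with their original topologies, and that forming the $\G$-quotient of the saturated closed subspace $\G_1\times_{\G_0}\G_1$ is compatible with passing to subspaces---do go through for the fat realisation), the gauge groupoid of the unit bundle is canonically $\G$, and so $\G$ sits inside $Fib\left(\G\right)$ as the full subgroupoid over $\iota\left(\G_0\right)$, which makes the full-faithfulness square Cartesian by construction. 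The one place where your wording overstates what you have actually argued is essential surjectivity: the parenthetical claim that $\pi\colon\left\|\E\G\right\|\to\left\|\G\right\|$ admits local sections \emph{because} it is the fat realisation of a degreewise trivial simplicial principal bundle is not a proof---realisation does not automatically preserve local triviality, and this is exactly the nontrivial input supplied by \cite{Andre}. Nothing is lost, however: the paper has already recorded that $\left\|\E\G\right\|$ is a principal $\G$-bundle over $\left\|\G\right\|$, and ``principal bundle'' in this paper includes the requirement that the projection admit local sections, so you should simply quote that statement rather than rederive it; then your section $b\mapsto\bigl[\sigma\left(b\right),u\left(\mu\left(\sigma\left(b\right)\right)\right)\bigr]$ (recording also the point $\mu\left(\sigma\left(b\right)\right)\in\G_0$, and with the pair reordered if one follows the paper's convention that $\left[\left(p,q\right)\right]$ runs from $\pi\left(p\right)$ to $\pi\left(q\right)$) completes the verification, for every topological groupoid $\G$, as claimed.
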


The following theorem will be of importance later:

\begin{thm}\cite{Andre}
Let $\G$ be a fibrant topological groupoid. Then for any topological groupoid $\h$ with paracompact Hausdorff object space $\h_0$, there is an equivalence of groupoids

$$\Hom_{\Tst}\left(\left[\h\right],\left[\G\right]\right) \simeq \Hom_{\T Gpd}\left(\h,\G\right)$$
natural in $\h$. In particular, the restriction of $\yg$ to the full subcategory of paracompact Hausdorff spaces agrees with the restriction of $\left[\G\right]$. (See Appendix \ref{app:topst} for the notation.)
\label{thm:imp}
\end{thm}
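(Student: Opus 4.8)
The plan is to exhibit the canonical comparison functor and prove it is an equivalence of groupoids by checking essential surjectivity and full faithfulness; the only substantive point is a single appeal to the universal property of the unit bundle of the fibrant groupoid $\G$, everything else being a mechanical translation between descent data and $\G_1$-valued cochains on the nerve of $\h$.

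First I would set up the target groupoid concretely. Recall that $\h_0\to[\h]$ is an atlas with $\h_1\cong\h_0\times_{[\h]}\h_0$ (and $\h_2$ the space of composable pairs of arrows), and that $[\h]$ is the stack quotient of $\h_1\rightrightarrows\h_0$; hence descent identifies $\Hom_{\Tst}([\h],[\G])$ with the groupoid whose objects are a principal $\G$-bundle $P\to\h_0$ (the restriction along the atlas) together with an isomorphism of principal $\G$-bundles $\alpha\colon s^{*}P\xrightarrow{\sim}t^{*}P$ over $\h_1$, normalized along the unit $u_\h\colon\h_0\to\h_1$ and satisfying the cocycle identity over $\h_2$, and whose morphisms are isomorphisms of $\G$-bundles over $\h_0$ intertwining the $\alpha$'s. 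A continuous homomorphism $f=(f_0,f_1)\colon\h\to\G$ produces such data: take $P=f_0^{*}\G_1$, the pullback of the unit principal $\G$-bundle $\G_1\to\G_0$, with $\alpha$ the descent datum built from $f_1$; a natural transformation produces the evident intertwining isomorphism. This is the comparison functor $\Phi\colon\Hom_{\T Gpd}(\h,\G)\to\Hom_{\Tst}([\h],[\G])$ of the statement, and it suffices to show it is essentially surjective and fully faithful.

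For essential surjectivity I would start from an arbitrary $\phi\colon[\h]\to[\G]$, i.e.\ a pair $(P,\alpha)$ as above. This is where the hypotheses enter: since $\h_0$ is paracompact Hausdorff and $\G$ is fibrant, the unit principal $\G$-bundle is universal, so $P$ admits a $\G$-bundle map to $\G_1$ covering some continuous $f_0\colon\h_0\to\G_0$ (only the existence half of the universal property is used), equivalently $P\cong f_0^{*}\G_1$ over $\h_0$; fix one such isomorphism and transport $\alpha$ across it. The key elementary observation is then that a descent datum on $f_0^{*}\G_1$ unwinds to a continuous map $f_1\colon\h_1\to\G_1$ with $s\circ f_1=f_0\circ s$ and $t\circ f_1=f_0\circ t$: over $\gamma\in\h_1$ the two fibres are the left $\G$-torsors $s^{-1}(f_0s\gamma)$ and $s^{-1}(f_0t\gamma)$, a $\G$-equivariant isomorphism between them is right translation by a unique element of $\G_1$, and this element depends continuously on $\gamma$ (being obtained by feeding the appropriate unit section into $\alpha$). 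Under this dictionary the normalization condition becomes $f_1\circ u_\h=u_\G\circ f_0$ and the cocycle identity becomes multiplicativity of $f_1$, so $f:=(f_0,f_1)\colon\h\to\G$ is a continuous homomorphism with $\Phi(f)\cong\phi$ by construction.

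For full faithfulness the same dictionary applies one dimension down: given homomorphisms $f,g\colon\h\to\G$, an isomorphism $\Phi(f)\xrightarrow{\sim}\Phi(g)$ restricts over $\h_0$ to a continuous $\theta\colon\h_0\to\G_1$ with $s\theta=f_0$, $t\theta=g_0$ (right translation again), and compatibility with the two descent data over $\h_1$ is exactly the naturality square, so $\theta$ is a natural transformation $f\Rightarrow g$; this assignment is inverse to $\Phi$ on morphisms. A fully faithful, essentially surjective functor of groupoids is an equivalence, and since $\Phi$ is given by precomposition-compatible formulae it is natural in $\h$ along homomorphisms $\h'\to\h$ with $\h'_0$ paracompact Hausdorff. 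The final assertion is the special case $\h=Y$ for a paracompact Hausdorff space $Y$ regarded as a groupoid with only identity arrows, where $[\h]=Y$; then $\Hom_{\T Gpd}(\h,\G)=\yg(Y)$ and $\Hom_{\Tst}([\h],[\G])=[\G](Y)$, naturally in $Y$. The hard part is the essential surjectivity, and it is entirely concentrated in that one invocation of fibrancy: without it a principal $\G$-bundle over $\h_0$ need not be pulled back from $\G_0$ at all (for $\G$ a discrete group, a nontrivial torsor over $S^1$ is a counterexample), so $\Phi$ fails to be essentially surjective; the remaining care is only to transport all structure along one fixed trivialization $P\cong f_0^{*}\G_1$ and to note that every extracted map is a composite of continuous maps.
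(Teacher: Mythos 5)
The paper itself does not prove Theorem \ref{thm:imp} --- it is quoted from \cite{Andre} --- so there is no internal proof to compare against. Judged on its own terms, your argument is correct and follows the standard route (essentially that of \cite{Andre}): identifying $\Hom_{\Tst}\left(\left[\h\right],\left[\G\right]\right)$ with descent data, i.e.\ principal $\G$-bundles over $\h$ (via Lemma \ref{lem:319} and the bundle description of $\left[\G\right]$), noting that full faithfulness is the purely formal dictionary between biequivariant bundle maps and continuous natural transformations (no fibrancy needed there), and concentrating the hypotheses in essential surjectivity, where only the existence half of the universality of the unit bundle over the paracompact Hausdorff space $\h_0$ is used to write the underlying bundle as $f_0^{*}\G_1$ and then unwind the transported descent datum into a continuous $f_1$ by evaluating on the unit section; the only point requiring care is the source/target and left/right conventions (whether the dictionary yields $s\circ f_1=f_0\circ s$ or the flipped identities), which is harmless bookkeeping.
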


\subsection{Paratopological Stacks and Mapping Stacks}\label{subsection:homotopical}

Stacks on $\T$ (with respect to the open cover topology) come in many different flavors. Of particular importance of course are topological stacks, which are those stacks coming from topological groupoids. However, this class of stacks seems to be too restrictive since many natural stacks, for instance the stack of maps between two topological stacks, appear to not be topological.

A topological stack is a stack $\X$ which admits a representable epimorphism $X \to \X$ from a topological space $X$. This implies:

\begin{itemize}
\item[i)] Any map $T \to \X$ from a topological space is representable (equivalently, the diagonal $\Delta:\X \to \X \times \X$ is representable) \cite{NoohiF}.
\item[ii)] If $T \to \X$ is any map, then the induced map $T \times_{\X} X \to T$ admits local sections (i.e. is an epimorphism in $\T$).
\end{itemize}

If the second condition is slightly weakened, the result is a stack which is ``nearly topological''.

\begin{dfn}\cite{NoohiH}\label{dfn:para}\index{paratopological stack}
A \textbf{paratopological} stack is a stack $\X$ on $\T$ (with respect to the open cover topology), satisfying condition i) above, and satisfying condition ii) for all maps $T \to \X$ from a paracompact Hausdorff space\footnote{In \cite{NoohiH}, this Hausdorff condition does not explicitly appear, but it is necessary for the theorems proven in \cite{NoohiH} and \cite{NoohiM}.}.
\end{dfn}

Paratopological stacks are very nearly topological stacks:

\begin{prop}\label{prop:n2} \cite{NoohiH}
A stack $\X$ with representable diagonal is paratopological if and only if there exists a topological stack $\bar \X$ and a morphism $q:\bar \X \to \X$ such that for any paracompact Hausdorff space $T,$ $q$ induces an equivalence of groupoids

\begin{equation}
q\left(T\right):\bar \X\left(T\right) \to \X\left(T\right).
\label{eq:peq}
\end{equation}
\end{prop}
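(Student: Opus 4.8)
The plan is to prove both implications, the forward direction by an explicit construction of $\bar\X$ and the reverse one by a fibre-product bookkeeping argument.

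For the forward direction, suppose $\X$ is paratopological (resp.\ pseudotopological) and let $X\to\X$ be the representable morphism from a space witnessing conditions i) and ii). Since the diagonal of $\X$ is representable, $R:=X\times_\X X$ is again a space, and the two projections, together with the diagonal of $X$ and the composition $R\times_X R\to R$, make $R\rightrightarrows X$ into a topological groupoid $\G$. I would set $\bar\X:=[\G]$, the topological stack presented by $\G$, and take $q\colon\bar\X\to\X$ to be the canonical morphism classifying the cocycle $X\to\X$ with its tautological descent datum over $R$. The remaining task is to show that $q(T)\colon\bar\X(T)\to\X(T)$ is an equivalence for every paracompact (resp.\ CW) space $T$. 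Full faithfulness holds for \emph{all} test spaces, because the natural comparison $X\times_{[\G]}X\to X\times_\X X$ is, by construction, the identity of $R$, and this (together with the fact that the atlas $X\to[\G]$ is an epimorphism) is exactly the standard criterion for $q$ to be a monomorphism of stacks. For essential surjectivity over a paracompact (resp.\ CW) space $T$, given $t\colon T\to\X$ form $P:=T\times_\X X$; by condition ii) the projection $P\to T$ admits local sections, so $P$ is an ordinary principal $\G$-bundle over $T$, i.e.\ an object of $\bar\X(T)$, and $q(T)$ carries it to $t$. Hence $q(T)$ is an equivalence, giving the ``only if'' direction.

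For the converse, suppose we are given a topological stack $\bar\X$ and $q\colon\bar\X\to\X$ inducing equivalences $q(T)$ over all paracompact (resp.\ CW) spaces $T$, and choose an atlas $p\colon Z\to\bar\X$. The composite $f:=q\circ p\colon Z\to\X$ is representable because $\X$ has representable diagonal, so condition i) is in place. For condition ii), let $t\colon T\to\X$ with $T$ paracompact (resp.\ a CW-complex); using a quasi-inverse to $q(T)$, lift $t$ to $\bar t\colon T\to\bar\X$ together with a $2$-isomorphism $q\circ\bar t\cong t$. Since $\bar\X$ is topological and $p$ is an atlas, the projection $T\times_{\bar\X}Z\to T$ admits local sections; composing the chosen $2$-isomorphism with $q\circ p=f$ identifies this projection with the composite $T\times_{\bar\X}Z\to T\times_\X Z\to T$, so any local section of $T\times_{\bar\X}Z\to T$ maps forward to a local section of $T\times_\X Z\to T$. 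Thus $f\colon Z\to\X$ witnesses condition ii), and $\X$ is paratopological (resp.\ pseudotopological).

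The main obstacle is the bookkeeping around the canonical morphism $q\colon[\G]\to\X$: one must set up the description of $[\G](T)$ as the groupoid of ordinary principal $\G$-bundles on $T$, identify $q(T)$ under this description with the operation $P\mapsto$ ``glue $P$ along $R$'' applied to $P=T\times_\X X$, and then record that full faithfulness of $q$ reduces to the identification $X\times_{[\G]}X\cong R\cong X\times_\X X$. A secondary technical point, in the converse, is checking that the factorization $T\times_{\bar\X}Z\to T\times_\X Z\to T$ is compatible with the chosen $2$-isomorphism $q\circ\bar t\cong t$ so that local sections genuinely transport; once these identifications are made explicit, the rest of the argument is formal.
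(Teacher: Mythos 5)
Your proposal is correct and follows essentially the same route as the paper's (sketched) proof: one direction composes an atlas $X \to \bar \X$ with $q$ to get a map satisfying condition ii), and the other builds $\bar \X$ as the stack of the groupoid $X \times_{\X} X \rightrightarrows X$ with $q$ the canonical map. You simply spell out the details (full faithfulness via $X\times_{\bar\X}X \cong X\times_{\X}X$, essential surjectivity via principal bundles, and the transport of local sections along the $2$-isomorphism) that the paper leaves implicit.
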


The idea of the proof can be found in \cite{NoohiH}, but is enlightening, so we include it for completeness:

If $q$ is as in (\ref{eq:peq}), and $p:X \to \bar \X$ is an atlas for $\bar \X$, then $$q \circ p:X \to \X$$ satisfies condition ii) of Definition \ref{dfn:para}, hence $\X$ is paratopological. Conversely, if $\X$ is paratopological, take $$p:X \to \X$$ as in condition ii) of Definition \ref{dfn:para}. Form the weak fibered product

$$\xymatrix{X \times_{\X} X \ar[r] \ar[d] & X \ar[d]^-{p}\\
X \ar[r]^-{p} & \X.}$$
Let $\bar \X$ be the topological stack associated with the topological groupoid $$X \times_{\X} X \rightrightarrows X,$$ and $q:\bar \X \to \X$ the canonical map.

\begin{dfn}\index{mapping stack}
Any two stacks $\X$ and $\Y$ on $\T$ have an exponential stack $\X^{\Y}$ such that

$$\X^{\Y}\left(T\right)=\Hom\left(\Y \times T,\X\right).$$
We will from here on in denote $\X^{\Y}$ by $\Map\left(\Y,\X\right)$ and refer to it as the \textbf{mapping stack} from $\Y$ to $\X$.
\end{dfn}

For the rest of this section, we work in the category $\cg$ of compactly generated Hausdorff spaces, which is Cartesian closed.

In \cite{NoohiM} Noohi proved:

\begin{thm}
If $\X$ and $\Y$ are topological stacks with $\Y \simeq \left[\h\right]$ with $\h_0$ and $\h_1$ compact Hausdorff\footnote{Noohi claimed that this works without assuming the Hausdorff condition and working with compactly generated spaces (the monocoreflective hull of $\ch$ in $\T$), however, his proof seems to have a gap without this Hausdorff assumption; within it, it is (implicitly) assumed that the product of a (non-Hausdorff) compact space with a compactly generated space is compactly generated, but a non-Hausdorff compact space is not necessarily locally compact (in the sense that each point has a neighborhood base of compact sets).}, then $\Map\left(\Y,\X\right)$ is a topological stack.
\label{thm:comap}
\end{thm}

It appears that when $\Y$ does not satisfy this rather rigid compactness condition, that this may fail (however, we will shortly release the condition for the arrow space). Noohi also proved that when $\Y$ is instead locally compact, then $\Map\left(\Y,\X\right)$ is at least paratopological:

\begin{thm}\cite{NoohiM}
If $\X$ and $\Y$ are paratopological stacks with $\Y \simeq \left[\h\right]$ such that $\h_0$ and $\h_1$ are locally compact, then $\Map\left(\Y,\X\right)$ is a paratopological stack.
\label{thm:paramap}
\end{thm}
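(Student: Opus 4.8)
The plan is to verify the two conditions that, by Proposition~\ref{prop:n2}, characterize paratopological stacks: that $\Map\left(\Y,\X\right)$ has representable diagonal, and that there is a topological stack together with a morphism to $\Map\left(\Y,\X\right)$ inducing an equivalence of groupoids on $T$-points for every paracompact Hausdorff space $T$.

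First I would dispose of the diagonal. Fix a space $T$ and two maps $f,g\colon\Y\times T\to\X$. Since $\X$ is paratopological, its diagonal is representable, so the stack $\operatorname{Isom}_{\X}\left(f,g\right)$ over $\Y\times T$ classifying $2$-isomorphisms $f\Rightarrow g$ is representable by a topological space $E\to\Y\times T$. For $h\colon T'\to T$, a $2$-isomorphism $h^{*}f\Rightarrow h^{*}g$ is precisely a section of $E$ over $\Y\times T'$, so $\operatorname{Isom}_{\Map\left(\Y,\X\right)}\left(f,g\right)$ is the sheaf on $T$ of relative sections of $E$ along the projection $\Y\times T\to T$. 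Presenting $\Y\cong\left[\h\right]$ and applying stack descent along the atlas $\h_{0}\to\Y$, this sheaf of sections is an equalizer of the two ordinary relative mapping spaces built from $E|_{\h_{0}\times T}$ and $E|_{\h_{1}\times T}$, which are representable by topological spaces because $\h_{0}$ and $\h_{1}$ are locally compact Hausdorff; an equalizer of spaces is a space, so $\Map\left(\Y,\X\right)$ has representable diagonal. (The same argument applies whenever the source is any locally compact Hausdorff space, which is all we will need below.)

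Next I would produce the topological model. Present $\Y\cong\left[\h\right]$. By stack descent along $\h_{0}\to\Y$, a map $\Y\times T\to\X$ is the same datum as a map $\h_{0}\times T\to\X$ together with descent data over $\h_{1}\times T$ subject to the cocycle condition over $\h_{2}\times T$, where $\h_{2}=\h_{1}\times_{\h_{0}}\h_{1}$; so $\Map\left(\Y,\X\right)$ is the weak limit of the diagram
\[
\Map\left(\h_{0},\X\right)\;\rightrightarrows\;\Map\left(\h_{1},\X\right)\;\Rrightarrow\;\Map\left(\h_{2},\X\right),
\]
and this weak limit is \emph{finite}, since descent for a stack is specified by levels $0,1,2$ of the nerve of $\h$. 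Now $\h_{0}$ and $\h_{1}$ are locally compact Hausdorff, and $\h_{2}$, being a closed subspace of the locally compact Hausdorff space $\h_{1}\times\h_{1}$, is locally compact Hausdorff as well. Finite weak limits of topological stacks are again topological, and, since a finite weak limit of stacks is computed objectwise on $T$-points, finite weak limits of paratopological stacks are paratopological; so it suffices to prove the special case that $\Map\left(K,\X\right)$ is paratopological for every locally compact Hausdorff space $K$.

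To handle this case, cover $K$ by open sets $\left\{U_{\alpha}\right\}$ with compact closures, and fix a topological model $\bar\X$ of $\X$ with atlas $X_{0}\to\bar\X$. Each $\Map\left(\overline{U_{\alpha}},\bar\X\right)$ is a topological stack by Theorem~\ref{thm:comap} (as $\overline{U_{\alpha}}$ is compact Hausdorff), as are the mapping stacks out of the compact intersections $\overline{U_{\alpha_{0}}}\cap\cdots\cap\overline{U_{\alpha_{n}}}$; and since the product of a compact space with a paracompact space is paracompact, each maps to the corresponding mapping stack out of $\X$ by an equivalence on paracompact $T$-points, so these pieces are paratopological. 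Assembling them exhibits $\Map\left(K,\X\right)$ as a weak limit of paratopological stacks, but one indexed by the (in general infinite) nerve of $\left\{U_{\alpha}\right\}$, hence a priori only a stack. I expect this last step to be the main obstacle: showing that the infinite gluing is nonetheless modeled by a topological stack over paracompact test spaces. I would attack it by verifying condition~ii) of Definition~\ref{dfn:para} directly. Given a paracompact $T$ and a map $f\colon K\times T\to\X$, I would construct, over a neighbourhood of each point $t\in T$, a lift of $f$ to $X_{0}$: choose local lifts in the $K$-direction over the relatively compact $\overline{U_{\alpha}}$ (finitely many of which cover any given compact subset of $K$), compare them on overlaps up to the isotropy of $\X$, and patch the finitely many relevant ones over a small enough neighbourhood of $t$ in $T$ by a partition of unity subordinate to a locally finite refinement. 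Local compactness of $K$ permits working with finitely many pieces at a time in the $K$-direction, paracompactness of $T$ permits the patching in the $T$-direction, and the two must be used in tandem; the bookkeeping is delicate because $K\times T$ need not itself be paracompact, so the construction has to be organized fibrewise over $T$. The space so produced over $\Map\left(K,\X\right)$ satisfies condition~ii), and then Proposition~\ref{prop:n2} exhibits $\Map\left(K,\X\right)$, hence also $\Map\left(\Y,\X\right)$, as paratopological.
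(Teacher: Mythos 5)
This theorem is not proved in the paper at all: it is quoted from \cite{NoohiM}, so there is no internal argument to compare against, and your proposal has to stand on its own as a proof of Noohi's result. As such it does not close. The reductions you perform (representable diagonal via relative section spaces, and the presentation of $\Map\left(\Y,\X\right)$ as a finite weak limit of $\Map\left(\h_0,\X\right)$, $\Map\left(\h_1,\X\right)$, $\Map\left(\h_2,\X\right)$ via descent along the nerve) are sensible, modulo two points you should tighten: the Isom stack over $\Y\times T$ is a representable \emph{morphism}, not a space, so you must pull back to $\h_0\times T$ and $\h_1\times T$ before invoking exponentiability; and the closure of paratopological stacks under finite weak limits is not automatic from pointwise computation of limits --- condition ii) of Definition~\ref{dfn:para} asks for a topological model, and to get one for the limit you need \emph{compatible} models for the whole diagram, which requires the functorial approximation alluded to in Proposition~\ref{prop:homot} (or an independent argument), not just Proposition~\ref{prop:n2} object by object.

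The genuine gap, however, is the case you yourself flag as ``the main obstacle'': showing $\Map\left(K,\X\right)$ is paratopological for a non-compact locally compact Hausdorff $K$. This is precisely the content of Noohi's theorem, and your plan for it does not work as described. Covering $K$ by relatively compact opens expresses $\Map\left(K,\X\right)$ as a weak limit indexed by an \emph{infinite} nerve, so the finite-limit closure you set up earlier is of no use, and the direct verification of condition ii) by ``patching by a partition of unity'' is not a legitimate operation here: local lifts of $f\colon K\times T\to\X$ to an atlas $X_0\to\bar\X$ take values in a space with no affine structure, so they cannot be averaged; gluing them requires coherent choices of comparison data in $X_0\times_{\bar\X}X_0$ over all overlaps, which is exactly the (infinite) descent problem one is trying to solve, not a bookkeeping issue. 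You also note, but do not resolve, that $K\times T$ need not be paracompact, which blocks the naive use of the paratopologicality of $\X$ on the product. Noohi's actual proof handles this by a different mechanism (a careful construction of a classifying space/atlas for the mapping stack exploiting exponentiability of locally compact spaces, with the paracompactness of the test space entering through covering-dimension/partition-of-unity arguments applied to $T$ alone), none of which is reproduced or replaced in your sketch; until that case is genuinely established, the theorem is not proved.
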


We end this section by extending Noohi's results to work without imposing conditions on the arrow space. Firstly, Noohi's proof of Theorem \ref{thm:comap} easily extends to the case when $\h_1$ need not be compact:

First, we will need a lemma from \cite{NoohiM}:

\begin{lem}\label{lem:mapstrep}
Let $\X$ and $\Y$ be topological stacks. Then the diagonal of the stack $\Map\left(\Y,\X\right)$ is representable, i.e., every morphism $T \to \Map\left(\Y,\X\right),$ with $T$ a topological space, is representable.
\end{lem}

\begin{thm}\label{thm:topcompmapme}
If $\X$ and $\Y$ are topological stacks and $\Y$ admits a compact Hausdorff atlas, then $\Map\left(\Y,\X\right)$ is a topological stack.
\end{thm}
\begin{proof}
Fix a compact Hausdorff atlas $K \to \Y$ for $\Y$. Let $\K$ denote the corresponding topological groupoid $$K \times_{\Y} K \rightrightarrows K.$$ Let $\mathcal{U}$ denote the set of finite open covers of $K$. For each $U \in \mathcal{U},$ consider the \v{C}ech groupoid $\K_{U}$ (See Definition \ref{dfn:cech}) and $\G^{\K_{U}},$ the internal exponent of groupoid objects in compactly generated Hausdorff spaces.

Set $R_U:=\left(G^{\K_{U}}\right)_0.$ By adjunction, the canonical map $$R_U \to \G^{\K_{U}}$$ induced by the unit, produces a homomorphism $$R_U \times \K_{U} \to \G.$$ Suppose $U$ is given by $U=\left(N_i\right),$ and let $V$ be the open cover of $R_U \times K,$ given by $$\left(R_U \times N_i\right).$$ Then this homomorphism is a map $$\left(R_U \times \K\right)_V \to \G,$$ and in particular, a generalized homomorphism from $R_U \times \K$ to $\G$ (See Definition \ref{dfn:genhom}). This corresponds to a map of stacks $$R_U \times \Y \to \X,$$ which by adjunction is a morphism $$p_U:R_U \to \Map\left(\Y,\X\right).$$ Let $$R:=\underset{U \in J} \coprod R_U.$$ Then we can conglomerate these morphisms to a morphism $$p:R \to \Map\left(\Y,\X\right).$$ We will show that $p$ is an epimorphism. By Lemma \ref{lem:mapstrep}, it is representable.

Suppose that $f:T \to \Map\left(\Y,\X\right)$ is any morphism from a space $T$. We will show that $f$ locally factors through $p$ up to isomorphism. By adjunction, $f$ corresponds to a map $$\bar f:T \times \Y \to \X,$$ which corresponds to a generalized homomorphism $$\tilde f:\left(T \times \K\right)_W \to \G$$ for some open cover $W$ of $T \times K.$ Let $t \in T$ be an arbitrary point. Since $K$ is compact Hausdorff, $T \times K,$ with the classical product topology, is already compactly generated, so we can assume, without loss of generality, that each element $W_j$ of the cover $W$ is of the form $$V_j \times U_j$$ for open subsets $V_j$ and $U_j$ of $T$ and $K$ respectively. Let $t \in T$ be an arbitrary point. Then as $W$ covers the slice $\left\{t\right\} \times K,$ which is compact, there exists a finite collection $\left(U_j \times V_j\right)_{j \in A_t}$ which covers it. Let $$\mathcal{O}_t:=\underset{j \in A_t} \cap U_j.$$ Then $\mathcal{O}_t$ is a neighborhood of $t$ in $T$ such that for all $j \in A_t,$ $$\mathcal{O}_t \times V_j \subset W_j.$$ Let $U_t=\left(V_j\right)_{j \in A_t}$ be the corresponding finite cover of $K,$ and $$N_t=\left(\mathcal{O}_t \times V_j\right)_{j \in A_t}$$ the cover of $\mathcal{O}_t \times K.$ Denote the composite
$$\mathcal{O}_t \times \K_{U_t} \cong \left(\mathcal{O}_t \times \K\right)_{N_t} \to \left(T \times \K\right)_W \stackrel{\tilde f}{\longrightarrow} \G$$ by $\tilde f_t.$ By adjunction, this corresponds to a homomorphism $$\mathcal{O}_t \to G^{\K_{U_t}},$$ so the induced map of stacks $$\mathcal{O}_t \to \Map\left(\Y,\X\right)$$ factors through $p$. This induced map of stacks is the same as the map adjoint to the one induced by $\tilde f_t,$
$$\mathcal{O}_t \times \Y \to T \times \Y \stackrel{\bar f}{\longrightarrow} \X,$$ so we are done.
\end{proof}

Now, we will recall some basic notions from topology:

\begin{dfn}
A \textbf{shrinking} of an open cover $\left(U_\alpha\right)_{\alpha \in A}$ is another open cover $\left(V_\alpha\right)_{\alpha \in A}$ \emph{indexed by the same set} such that for each $\alpha,$ the closure of $V_\alpha$ is contained in $U_\alpha.$ A topological space $X$ is a \textbf{shrinking space} if and only if every open cover of $X$ admits a shrinking.
\end{dfn}

The following proposition is standard:

\begin{prop}
Every paracompact Hausdorff space is a shrinking space.
\end{prop}

\begin{thm}\label{thm:cw}
If $\X$ and $\Y$ are topological stacks such that $\Y$ admits a locally compact Hausdorff atlas, then $\Map\left(\Y,\X\right)$ is paratopological.
\end{thm}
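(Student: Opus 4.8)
The plan is to reduce to the criterion of Proposition~\ref{prop:n2}: it suffices to exhibit a topological stack $\overline{\mathcal M}$ and a morphism $q\colon\overline{\mathcal M}\to\Map(\Y,\X)$ inducing an equivalence of groupoids $\overline{\mathcal M}(T)\xrightarrow{\ \sim\ }\Map(\Y,\X)(T)$ for every paracompact Hausdorff space $T$, together with a separate check that $\Map(\Y,\X)$ has representable diagonal. The latter does not use the CW hypothesis and is established exactly as in the proofs of Theorems~\ref{thm:comap} and~\ref{thm:paramap} (cf.\ \cite{NoohiM}): it follows from representability of the diagonal of $\X$ (condition i) in the definition of a topological stack) by a local analysis, on the test space, of the relevant relative $\operatorname{Isom}$-sheaves. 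The substantive part is the construction of $\overline{\mathcal M}$, for which the proposal is to run Noohi's proof of Theorem~\ref{thm:paramap} with ``locally compact'' replaced throughout by ``CW-complex''.

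Fix presentations $\Y\cong[\h]$ with $\h_0$ a CW-complex and $\X\cong[\G]$, and, by Lemma~\ref{lem:fibm}, take $\G$ fibrant; then by Theorem~\ref{thm:imp} any map into $\X$ from a stack presented by a groupoid with paracompact object space is an honest continuous functor into $\G$. What replaces local compactness of $\h_0$ are two standard features of CW-complexes. First, $\h_0$ is the union of the increasing tower of its skeleta $\h_0^{(n)}$ along \emph{closed cofibrations}; since a colimit along closed cofibrations is a homotopy colimit, the mapping stack out of $\Y$ can be assembled, over suitable test objects and locally on any paracompact one, as a homotopy limit of the mapping stacks out of the skeletal stages $[\h^{(n)}]$ (where $\h^{(n)}$ is the restriction of $\h$ to $\h_0^{(n)}$) --- this is where the CW structure stands in for exponentiability, and establishing it over \emph{all} paracompact test objects is part of the work. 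Second, passing from $\h_0^{(n)}$ to $\h_0^{(n+1)}$ is a single pushout along a coproduct of inclusions $S^{n}\hookrightarrow D^{n+1}$, so the transition maps of the resulting tower are pulled back from the path-fibrations $\Map(D^{n+1},-)\to\Map(S^{n},-)$ and are therefore Hurewicz fibrations. Each skeletal stage is in turn built cell by cell out of the compact disks $D^{k}$, so by Theorem~\ref{thm:comap} the contribution of each cell is a topological stack, and the (possibly infinite) gluing passing from the cells of a fixed dimension to the whole skeleton is controlled, over a paracompact test object, by a partition of unity subordinate to a locally finite refinement. Assembling these, I would take $\overline{\mathcal M}$ to be the inverse limit of the tower of (para)topological stacks $\overline{\mathcal M}^{(n)}$ modelling the skeletal stages --- using Proposition~\ref{prop:homot} for functorial topological models --- along the fibration-like transition maps, with $q$ the tautological comparison; over paracompact $T$ this is an equivalence because Theorem~\ref{thm:imp} turns the skeletal data into honest functors, the tower is strict there, and the partition-of-unity argument glues.

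The step I expect to be the main obstacle is showing that $\overline{\mathcal M}$ --- a countable inverse limit of topological stacks along maps that are fibrations in the appropriate sense --- is again a topological stack satisfying condition ii) of Definition~\ref{dfn:para}: one must construct an atlas by a genuine topological space for this limit and, given a map into $\Map(\Y,\X)$ from a paracompact space, lift it locally through the entire tower. This is precisely the kind of limit that is forced to behave only because of the cellular/cofibrant structure --- the closed-cofibration tower and the homotopy extension property for CW pairs let one solve the lifting problem one skeleton at a time, exploiting local finiteness on the paracompact base --- whereas for a general topological, or even a general compactly generated Hausdorff, domain no such control is available; that obstruction is exactly what the compactly generated Grothendieck topology in the rest of the paper is designed to remove. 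Once $\overline{\mathcal M}$ is known to be topological, the remaining verifications --- naturality and well-definedness of $q$, independence of the chosen presentations and atlases, and compatibility with the $\h_1$-structure at each skeletal level --- are routine.
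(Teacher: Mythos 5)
Your plan has a genuine gap at exactly the step you flag as ``the main obstacle,'' and that obstacle is not removable within the open-cover world in which you are working. A countable inverse limit of topological stacks along the skeletal tower is not known to be a topological (or even paratopological) stack: topological stacks are only known to admit \emph{finite} weak limits, and producing an atlas for $\overline{\mathcal M}$ is precisely the problem. Your appeal to the homotopy extension property and skeleton-by-skeleton lifting is a heuristic, not a construction of an atlas or a verification of condition ii) of Definition~\ref{dfn:para}; indeed the failure of such infinite limits is the very defect the compactly generated topology of this paper is designed to repair, so it cannot be assumed away here. There is a second unproved step earlier in your argument: writing $\Map\left(\Y,\X\right)$, even just over paracompact test objects, as a homotopy limit of the mapping stacks out of the skeletal stages requires that $\Y \times T$ be a weak \emph{colimit} of the stages $\left[\h^{(n)}\right]\times T$ in the 2-category of stacks for the open cover topology. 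The fact that $\h_0=\varinjlim \h_0^{(n)}$ along closed cofibrations is a homotopy colimit of spaces does not give this, because the Yoneda embedding and stackification do not preserve such colimits --- the same obstruction the paper points out for the colimit of compact subsets. You acknowledge this is ``part of the work,'' but it is the work, and it is not done.

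For comparison, the paper's proof avoids towers entirely. Since compactly generated Hausdorff spaces are Cartesian closed, the internal hom of groupoid objects $Fib\left(\G\right)^{\h}$ exists as a single topological groupoid (with $\G$ replaced by its fibrant model via Lemma~\ref{lem:fibm}), and one sets $\K=\left[Fib\left(\G\right)^{\h}\right]$ with its canonical map to $\Map\left(\Y,\X\right)$. For a paracompact Hausdorff test space $T$ one may take covers by closed neighborhoods, so that $T_{\mathcal{U}}$ has paracompact object space; the only place the CW hypothesis enters is the classical fact that the product of a paracompact Hausdorff space with a CW-complex is paracompact. Theorem~\ref{thm:imp} then identifies $\Hom_{\cgr Gpd}\left(T_{\mathcal{U}}\times\h,Fib\left(\G\right)\right)$ with $\Hom_{\Tst}\left(T\times\Y,\X\right)$, giving $\K\left(T\right)\cong\Map\left(\Y,\X\right)\left(T\right)$ and hence paratopologicality via Proposition~\ref{prop:n2}. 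If you want to salvage your approach, you would have to prove the two missing statements above independently, which amounts to substantially more than the theorem itself.
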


\begin{proof}
It is proven in \cite{NoohiM} that if $\X$ and $\Y$ are topological stacks, then $\Map\left(\Y,\X\right)$ has a representable diagonal. Therefore, by Proposition \ref{prop:n2}, it suffices to prove that there exists a topological stack $\Z$ and a map $$q:\Z \to \Map\left(\Y,\X\right)$$ which induces an equivalence of groupoids $$q\left(T\right):\Z\left(T\right) \to \Map\left(\Y,\X\right)\left(T\right)$$ along every paracompact Hausdorff space $T$.

Let $\G$ be a topological groupoid presenting $\X$. Let $Y \to \Y$ be a locally compact Hausdorff atlas for $\Y$. Then we may find a covering of $Y$ by compact neighborhoods, $\left(Y_\alpha\right).$ It follows that $$\underset{\alpha} \coprod Y_\alpha \to Y \to \Y$$ is an atlas. Hence we can choose a topological groupoid $\h$ presenting $\Y$ such that $\h_0$ is a disjoint union of compact Hausdorff spaces. Let $Fib\left(\G\right)^\h$ denote the internal exponent of groupoid objects in compactly generated Hausdorff spaces. Let $$\K:=\left[Fib\left(\G\right)^\h\right].$$ Note that there is a canonical map $$\varphi:\K \to \Map\left(\Y,\X\right)$$ which sends any generalized homomorphism $T_{\mathcal{U}} \to Fib\left(\G\right)^\h$ to the induced generalized homomorphism  from $T \times \h$, $T_{\mathcal{U}} \times \h \to Fib\left(\G\right)$ (which may be viewed as object in $\Hom_{\Tst}\left(T \times \Y,\X\right)$ since $\G$ and $Fib\left(\G\right)$ are Morita equivalent).

Suppose that $T$ is a paracompact Hausdorff space. Then:

\begin{eqnarray*}
\K\left(T\right)&\simeq& \underset{\mathcal{U}} \hc \Hom_{\cg Gpd} \left(T_{\mathcal{U}},Fib\left(\G\right)^\h\right)\\
&\simeq& \underset{\mathcal{U}} \hc \Hom_{\cg Gpd} \left(T_{\mathcal{U}}\times \h,Fib\left(\G\right)\right).\\
\end{eqnarray*}
Note that any paracompact Hausdorff space is a shrinking space so without loss of generality we may assume that each cover $\mathcal{U}$ of $T$ is a topological covering by closed neighborhoods. Since any closed subset of a paracompact space is paracompact, this means that the groupoid $T_{\mathcal{U}}$ has paracompact Hausdorff object space. Moreover, the object space of $T_{\mathcal{U}}\times \h$ is the product of the compactly generated space $T_{\mathcal{U}}$ and the locally compact Hausdorff space $\h_0$, hence compactly generated. Since $T_{\mathcal{U}}$ has paracompact Hausdorff object space, and $\h_0$ is a disjoint union of compact Hausdorff spaces, the product is in fact paracompact Hausdorff by \cite{paraprod}. Finally, by Theorem \ref{thm:imp}, we have that

$$\Hom_{\cgr Gpd} \left(T_{\mathcal{U}}\times \h,Fib\left(\G\right)\right)\simeq \Hom_{\Tst}\left(T \times \Y,\X\right).$$
Hence $\K\left(T\right) \simeq \Map\left(\Y,\X\right)\left(T\right).$
\end{proof}

\section{The Compactly Generated Grothendieck Topology}\label{sec:cgtop}
\sectionmark{The Compactly Generated Groth. Topology}

Recall from section \ref{subsec:cgh} that if $Y$ is a compactly generated Hausdorff space and $\X$ a topological stack, then one might be tempted to claim that

$$\Map\left(Y,\X\right):= \underset{K_\alpha \hookrightarrow Y} \hl \Map\left(K_\alpha,\X\right),$$
where the weak limit is taken over all compact subsets $K_\alpha$ of $Y$. However, there are some immediate problems with this temptation:

\begin{itemize}
	\item This weak limit may not exist as a topological stack.
	\item The fact that $Y$ is the colimit of its compact subsets in $\cgh$ does not imply that $Y$ is the weak colimit of its compact subsets as a topological stack since the Yoneda embedding does not preserve arbitrary colimits.
\end{itemize}

Recall however that for an arbitrary subcanonical Grothendieck site $\left(\C,J\right)$, the Yoneda embedding $y:\C \hookrightarrow Sh_J\left(\C\right)$ preserves colimits of the form $$C = \underset{C_\alpha \to C} \varinjlim C_\alpha$$ where $\left(C_\alpha \stackrel{f_\alpha}{\rightarrow} C\right)$ is a $J$-cover. We therefore shall construct a Grothendieck topology $\cgt$ on $\cgh$, called the compactly generated Grothendieck topology,  such that for all $Y$, the inclusion of all compact subsets $\left(K_\alpha \hookrightarrow Y\right)$ is a $\cgt$-cover. As it shall turn out, in addition to being Cartesian closed, the bicategory of presentable stacks for this Grothendieck topology will also be complete.

In this subsection, we give a geometric construction of the compactly generated Grothendieck topology on $\cgh$. Those readers not familiar with topos theory may wish to skip to Definition \ref{dfn:cgcover} for the concrete definition of a $\cgt$-cover. Some important properties of $\cgt$-covers are summarized as follows:

\begin{itemize}
	\item[i)] Every open cover is a $\cgt$-cover (Proposition \ref{prop:opcov})
	\item[ii)] For any space, the inclusion of all its compact subsets is a $\cgt$-cover (Corollary \ref{cor:comcov})
	\item[iii)] Every $\cgt$-cover of a locally compact Hausdorff space can be refined by an open one (Proposition \ref{prop:loccov})
	\item[iv)] The category of $\cgt$-sheaves over compactly generated Hausdorff spaces is equivalent to the category of ordinary sheaves over compact Hausdorff spaces (Theorem \ref{thm:eqlshvs}).
\end{itemize}
\subsection{The Compactly Generated Grothendieck Topology}\label{sec:Groth}
Let $$j:\kp \hookrightarrow \cg$$ be the full and faithful inclusion of compact Hausdorff spaces into compactly generated Hausdorff spaces. This induces a geometric morphism $\left(\ju,\jd\right)$

$$\xymatrix{\Skp \ar@<-.5ex>_-\jd[r] & \Sg \ar@<-.5ex>[l]_-\ju}$$
which is an embedding (i.e. $\jd$ is full and faithful \cite{Ieke})\footnote{Technically, these categories are not well defined due to set-theoretic issues, however, this can be overcame by careful use of Grothendieck universes. We will not dwell on this and all such similar size issues in this paper.}.

Denote by

$$\yk:\cg \to \Skp$$
the functor which assigns a compactly generated Hausdorff space $X$ the presheaf $T \mapsto \Hom_{\cg}\left(T,X\right)$ and by

$$\ycg:\kp \to \Sg$$
the functor which assigns a $T \in \kp$ the presheaf $X \mapsto \Hom_{\cg}\left(X,T\right)$. Note that $\yk$ is a fully faithful embedding. The pair $\left(\ju,\jd\right)$ can be constructed as the adjoint pair induced by left Kan extending $\yk$ along the Yoneda embedding. Explicitly:

\begin{equation*}
\ju\left(F\right)\left(T\right)=F\left(T\right)
\end{equation*}
and
\begin{equation*}
\jd\left(G\right)\left(X\right)=\Hom_{\Skp}\left(\yk\left(X\right),G\right).
\end{equation*}

From the general theory of adjoint functors, $\jd$ restricts to an equivalence between, on one hand the full subcategory of $\Skp$ whose objects are those for which the co-unit $\varepsilon\left(j\right)$ is an isomorphism, and on the other hand, the full subcategory of $\Sg$ whose objects are those for which the unit $\eta\left(j\right)$ is an isomorphism. However, since $j$ is fully faithful, the co-unit is always an isomorphism, which verifies that $\jd$ is fully faithful and gives us a way of describing its essential image.

In fact, $\ju$ also has a left adjoint $\js$. The adjoint pair $\js \rightt \ju$ is the one induced by $\ycg$. Hence, $\js$ is the left Kan extension of $\ycg$. We conclude that $\js$ is also fully faithful (see: \cite{Ieke}).

Denote by $Sh\left(\cg\right)$ the topos of sheaves on compactly generated Hausdorff spaces with respect to the open cover topology. We define $Sh\left(\kp\right)$ as the unique topos fitting in the following pullback diagram:

$$\xymatrix{\raise1.0ex\hbox{$Sh\left(\kp\right)$}  \hspace{0.3cm} \ar@{>->}[r] \ar@{>->}[d] & \raise1.0ex\hbox{$\Skp$}  \ar@{>->}[d]\\
Sh\left(\cg\right) \hspace{0.3cm} \ar@{>->}[r] & \Sg}\\$$

Due to the factorization theorem of geometric morphisms in topos theory \cite{Ieke}, the geometric embedding $Sh\left(\kp\right) \rightarrowtail \Skp$ corresponds to a Grothendieck topology $\kpt$ on $\ch$. It is easy to verify that since the functor $j$ is fully faithful, the covering sieves in $\kpt$ for a compact Hausdorff space $K$ are precisely those subobjects $$S \rightarrowtail y\left(K\right)$$ which are obtained by restricting a covering sieve of $K$ with respect to the open cover topology on $\cg$ to $\kp$ via the functor $\ju$. In this sense, the covering sieves are ``the same as in the open cover topology''.

\begin{prop}
The Grothendieck topology $\kpt$ on $\kp$ has a basis of finite covers of the form $\left(T_i \hookrightarrow T\right)_{i=1}^{n}$ by compact neighborhoods (i.e. their interiors form an open cover).
\end{prop}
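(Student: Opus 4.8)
The plan is to unwind the definition of the Grothendieck topology $\kpt$ obtained from the geometric embedding $Sh\left(\kp\right)\rightarrowtail\Skp$, using the already-established fact (stated just before the proposition) that a sieve $S\rightarrowtail y\left(K\right)$ is $\kpt$-covering on a compact Hausdorff space $K$ precisely when it is the $\ju$-restriction of an open-cover-covering sieve of $K$ in $\cg$. Since $K$ is compact Hausdorff, any open cover $\left(V_\lambda\right)$ of $K$ admits a finite subcover $V_{\lambda_1},\dots,V_{\lambda_m}$, so it suffices to manufacture, from a finite open cover, a finite cover by compact neighborhoods whose associated sieve is contained in the one generated by the $V_{\lambda_i}$. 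The standard way to do this: because $K$ is compact Hausdorff it is normal, so by the shrinking lemma there is an open cover $\left(W_i\right)_{i=1}^{m}$ with $\overline{W_i}\subseteq V_{\lambda_i}$; set $T_i:=\overline{W_i}$. Each $T_i$ is closed in a compact space, hence compact, and $W_i\subseteq\operatorname{int}\left(T_i\right)$, so the interiors $\operatorname{int}\left(T_i\right)$ already cover $K$; thus $\left(T_i\hookrightarrow K\right)_{i=1}^{m}$ is a cover by compact neighborhoods in the sense stated. Since $T_i\hookrightarrow K$ factors through $V_{\lambda_i}\hookrightarrow K$, the sieve generated by the $T_i$ is contained in the sieve generated by the original open cover; combined with the fact that this sieve is itself of the required refining-an-open-cover form, one checks it is $\kpt$-covering.

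The next step is to verify the two axioms required of a \emph{basis} (pretopology) — stability under pullback and the fact that it generates $\kpt$. Generation is exactly the argument of the previous paragraph run in reverse together with the characterization of $\kpt$-sieves: every $\kpt$-covering sieve contains a sieve generated by a finite family of compact neighborhoods, and conversely every such finite family generates a $\kpt$-covering sieve (because its interiors form an open cover, and the open-cover topology is contained in $\kpt$ — this is the compact-Hausdorff case of Proposition \ref{prop:opcov}, or can be seen directly from the description of $\kpt$-sieves). For pullback-stability one takes a cover $\left(T_i\hookrightarrow T\right)_{i=1}^{n}$ by compact neighborhoods and an arbitrary map $f:T'\to T$ in $\kp$; the preimages $f^{-1}\left(T_i\right)$ are closed in the compact space $T'$, hence compact, and since $\left(\operatorname{int} T_i\right)$ covers $T$ the sets $f^{-1}\left(\operatorname{int} T_i\right)\subseteq\operatorname{int}\left(f^{-1}T_i\right)$ cover $T'$, so $\left(f^{-1}\left(T_i\right)\hookrightarrow T'\right)_{i=1}^{n}$ is again a finite cover by compact neighborhoods. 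This is the only place where a small subtlety arises: one wants $f^{-1}\left(\operatorname{int} T_i\right)$ to be contained in the interior of $f^{-1}\left(T_i\right)$, which holds because $f$ is continuous and $f^{-1}$ of an open set is open.

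The main obstacle, to the extent there is one, is bookkeeping rather than mathematics: one must be careful that the notion of ``cover'' implicit in a \emph{basis for a Grothendieck topology on $\kp$} matches the sieve-theoretic description of $\kpt$ inherited from the topos embedding, i.e.\ that ``the sieve generated by $\left(T_i\hookrightarrow T\right)$ is $\kpt$-covering'' is equivalent to ``$\left(T_i\hookrightarrow T\right)$ lies in the candidate basis.'' Once the dictionary ``$\kpt$-covering sieve on $K$ $\Leftrightarrow$ restriction of an open-covering sieve of $K$'' is in hand, this reduces to the purely point-set facts that (a) compact Hausdorff spaces are normal so the shrinking lemma applies, (b) closed subsets of compact spaces are compact, and (c) interiors behave well under continuous preimages. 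I would present the argument in the order: recall the sieve description of $\kpt$; use compactness plus the shrinking lemma to refine any open cover by a finite family of compact neighborhoods; observe the converse (finite families of compact neighborhoods generate $\kpt$-covering sieves); then dispatch pullback-stability; and conclude that the finite covers by compact neighborhoods form a basis for $\kpt$.
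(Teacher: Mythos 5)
Your proof is correct and takes essentially the same route as the paper, whose entire argument is the observation that compact Hausdorff spaces are locally compact in the strong sense (every point has a local base of compact neighborhoods), so that covers by compact neighborhoods generate the same sieves as open covers. Your substitution of the shrinking lemma for local bases of compact neighborhoods, and your explicit verification of pullback-stability and the sieve bookkeeping, are harmless elaborations of that same refinement argument.
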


\begin{proof}
Compact Hausdorff spaces are locally compact in the strong sense that every point has a local base of compact neighborhoods. Hence covers by compact neighborhoods generate the same sieves as open covers.
\end{proof}

Consider the geometric embedding

$$\xymatrix{Sh\left(\kp\right) \ar@<-0.5ex>[r]_{i} & \Skp \ar@<-0.5ex>[l]_{a}},$$
where $a$ denotes the sheafification with respect to $\kpt$.

\begin{rmk}
It is clear that for any presheaf $F$ in $\Sg$, the $\kpt$-sheafification of the restriction of $F$ to $\ch$ is the same as the restriction of the sheafification of F.
\end{rmk}

By composition, we get an embedding of topoi

$$\xymatrix@C=1.5cm{Sh\left(\kp\right) \ar@<-0.5ex>[r]_{\jd \circ i} & \Sg \ar@<-0.5ex>[l]_{a \circ \ju}}\label{eq:emb}.$$
Again by the factorization theorem \cite{Ieke}, there exists a unique Grothendieck topology $\mathscr{CG}$ on $\cg$ such that the category of sheaves $Sh_{\mathscr{CG}}\left(\cg\right)$ is $\jd \left(Sh\left(\kp\right)\right)$. We will construct it and give some of its properties.

There is a very general construction \cite{Ieke} that shows how to extract the unique Grothendieck topology corresponding to this embedding.

First, we define a universal closure operation on $\Sg$. (For details, see \cite{Ieke} V.1 on Lawvere-Tierney topologies.) Let $F$ be a presheaf over $\cg$ and let $m:A \rightarrowtail F$ be a representative for a subobject $A$ of $F$. Then a representative for the subobject $\bar A$ is given by the left hand side of the following pullback diagram

$$\xymatrix{\bar A \ar[d] \ar[r] & \jd a \ju\left(A\right) \ar^-{\jd a \ju\left(m\right)}[d]\\
F \ar^-{\eta_F}[r] & {\jd a \ju\left(F\right),}}$$
where $\eta$ is the unit of the adjunction $a \circ \ju \rightt \jd \circ i$.

To describe the covering sieves of $\cgt$, it suffices to describe the universal closure operation on representables.

Let $X$ be a compactly generated Hausdorff space.

\begin{clm}
The unit $\eta_X$ is an isomorphism.
\end{clm}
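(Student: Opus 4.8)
The plan is to show that the unit $\eta_X : \yk(X) \to \jd \, Sh \, \ju(\yk(X))$ is an isomorphism by unwinding all the functors involved and reducing everything to a statement about representable sheaves on $\kp$. First I would observe that $\ju \circ \yk = \ycg^{\text{op}}$-style restriction is simply evaluation: $\ju(\yk(X))$ is the presheaf on $\kp$ sending a compact Hausdorff $K$ to $\Hom_{\cg}(K,X)$. The key point is that this presheaf is \emph{already a sheaf} for the topology $\kpt$. Indeed, by the Proposition just proved, $\kpt$ has a basis of finite covers $(K_i \hookrightarrow K)_{i=1}^n$ by compact neighborhoods, and the sheaf condition for such a cover is exactly the statement that a map $K \to X$ is determined by, and can be glued from, its restrictions to a finite closed cover of $K$ — which holds for \emph{any} target space $X$, since $\cg$ carries the open-cover (hence closed-finite-cover–compatible) topology and $X$ is in particular a topological space. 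So $Sh\, \ju(\yk(X)) \cong \ju(\yk(X))$, with the sheafification unit an isomorphism.

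Next I would use the explicit formula for $\jd$ given in the excerpt: $\jd(G)(X') = \Hom_{\Skp}(\yk(X'), G)$. Applying this to $G = Sh\,\ju(\yk(X)) \cong \ju(\yk(X))$, and invoking the Yoneda lemma in $\Skp$, we get
$$\jd\,Sh\,\ju(\yk(X))(X') \cong \Hom_{\Skp}\bigl(\yk(X'), \ju(\yk(X))\bigr) \cong \ju(\yk(X))\bigl(\text{evaluated appropriately}\bigr),$$
and one checks this is naturally isomorphic to $\Hom_{\cg}(X', X) = \yk(X)(X')$. Tracing through, the composite isomorphism is precisely the unit $\eta_X$. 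Equivalently, and perhaps more cleanly: since $j : \kp \hookrightarrow \cg$ is fully faithful, the counit $\varepsilon$ of the adjunction $\ju \dashv \jd$ is an isomorphism, so $\jd$ is fully faithful and its essential image consists of those presheaves on which $\eta$ is iso; and $\yk(X)$ lies in this image precisely because restricting along $j$ and then right Kan extending back recovers a presheaf that agrees with $\yk(X)$ on representables, by the sheaf property established in the first step.

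The main obstacle — and really the only substantive point — is verifying that $\ju(\yk(X))$ is a $\kpt$-sheaf, i.e. that $X$ satisfies descent along finite closed covers of compact Hausdorff spaces. This is where the Hausdorff hypothesis and the choice of topology $\kpt$ (covering sieves ``the same as in the open cover topology'') do the work: a finite closed cover of a compact Hausdorff space has the property that the quotient map from the disjoint union is a quotient map, and maps out of a quotient are detected on the pieces with matching overlaps; Hausdorffness of $K$ ensures the overlaps $K_i \cap K_j$ are closed and the gluing is unambiguous. Once this sheaf property is in hand, everything else is formal manipulation of the adjunction $\ju \dashv \jd$ and Yoneda, so I would keep that part brief and concentrate the exposition on the descent verification.
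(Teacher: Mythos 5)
Your first step --- that the restriction $\ju X = \yk\left(X\right)$ of the representable presheaf of $X$ is already a $\kpt$-sheaf, because maps into an arbitrary space glue along finite covers of a compact Hausdorff space by compact neighborhoods --- is correct and matches the paper's first step. The gap is in the second step, which is exactly the point you set aside as formal. You cannot ``invoke the Yoneda lemma in $\Skp$'' to compute $\Hom_{\Skp}\left(\yk\left(X'\right),\yk\left(X\right)\right)$: the presheaf $\yk\left(X'\right)$ is representable in $\Skp$ only when $X'$ is itself compact, so for a general compactly generated $X'$ there is nothing to ``evaluate appropriately''. What is actually needed is the assertion that $\yk:\cg\to\Skp$ is fully faithful, i.e.\ that the canonical map $\Hom_{\cg}\left(X',X\right)\to\Hom_{\Skp}\left(\yk\left(X'\right),\yk\left(X\right)\right)$ is a bijection; this is precisely what the paper invokes, and it is where the hypotheses do the real work: a natural transformation $\yk\left(X'\right)\to\yk\left(X\right)$ determines a function $X'\to X$ by evaluating at points (the one-point space lies in $\kp$), and that function is continuous only because $X'$ carries the final topology with respect to maps from compact Hausdorff spaces. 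By contrast, the descent condition of your first step holds for maps into an arbitrary topological space, so it cannot be ``the only substantive point''.

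Your ``cleaner'' alternative has the same gap: Yoneda does give that $\jd Sh \ju\left(X\right)$ agrees with the representable presheaf of $X$ on compact Hausdorff test objects (since $\yk\left(K\right)$ is representable for $K\in\kp$), but agreement on $\kp$ does not make $\eta_X$ an isomorphism over all of $\cg$; promoting such agreement to an isomorphism at every compactly generated $X'$ is again exactly the full fidelity of $\yk$, so the argument is circular at the decisive moment. Once you replace the Yoneda appeal by a proof (or citation) of the fully-faithfulness of $\yk$, your outline coincides with the paper's proof.
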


\begin{proof}
The restriction $\ju X$ is a $\kpt$-sheaf. Hence $$\jd a \ju X \cong \jd \yk\left(X\right).$$ Furthermore, for any compactly generated Hausdorff space $Y$, $$\jd \yk\left(X\right)\left(Y\right)=\Hom_{\Skp}\left(\yk\left(Y\right),\yk\left(X\right)\right)\cong\Hom_{\cg}\left(Y,X\right)$$ since $\yk$ is fully faithful.
\end{proof}

Now, let $m:A \rightarrowtail X$ a sieve. Then, since the unit $\eta_X$ is an isomorphism, $\bar A$ is represented by the monomorphism $$\jd a \ju\left(A\right) \rightarrowtail X.$$

The covering sieves in $\cgt$ of $X$ are exactly those sieves on $X$ whose closure is equal to the maximal sieve, i.e. $X$. So $$m:A \rightarrowtail X$$ is a covering sieve if and only if $$\bar m:\jd a \ju\left(A\right) \to \jd\ju \left(X\right)\cong X$$ is an isomorphism. Since $\jd$ is fully faithful, this is if and only if $$\tilde m: a \ju\left(A\right) \to \ju X$$ is an isomorphism. In other words, $$m:A \rightarrowtail X$$ is a covering sieve if and only if the $\kpt$-sheafification of $\ju\left(A\right)$ is isomorphic to $\yk\left(X\right)$

\begin{dfn}\label{dfn:cgcover}
Let $X$ be a compactly generated Hausdorff space and let $$\left(\alpha_i:V_i \hookrightarrow X\right)_{i \in I}$$ be family of inclusions of subsets $V_i$ of $X$. Such a family is called a $\cgt$-\textbf{cover} if for any compact subset $K$ of $X$, there exists a (finite) subset $J\left(K\right) \subseteq I$ such that the collection $\left(V_j \cap  K\right)_{j \in J\left(K\right)}$ can by refined by an open cover of $K$. Denote the set of $\cgt$-covers of $X$ by $\mathcal{B}\left(X\right).$
\end{dfn}

\begin{lem}
$\mathcal{B}$ is a basis for the Grothendieck topology $\cgt$.
\end{lem}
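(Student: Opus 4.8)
The plan is to establish the lemma in two steps: (1) verify that $\mathcal{B}$ satisfies the axioms of a basis for a Grothendieck topology on $\cg$ — it contains the trivial family on each object, it is stable under pullback, and it is transitive — so that it generates some Grothendieck topology $\mathscr{T}$ on $\cg$; and (2) show $\mathscr{T}=\cgt$ by comparing $\mathcal{B}$-covers with the sieve-theoretic description of $\cgt$-covering sieves obtained just above, namely that $m\colon A\rightarrowtail X$ is $\cgt$-covering iff the $\kpt$-sheafification of $\ju(A)$ is $\yk(X)$. Throughout, a subset $V\subseteq X$ is given the (Kelleyfied) subspace topology; on compact subsets this refinement is invisible, so it plays no role in the condition of Definition \ref{dfn:cgcover}.

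For step (1): the family $\{X\hookrightarrow X\}$ lies in $\mathcal{B}(X)$ since for any compact $K\subseteq X$ the one-element open cover $\{K\}$ of $K$ refines $\{X\cap K\}$. For pullback stability, if $(V_i\hookrightarrow X)_i\in\mathcal{B}(X)$ and $f\colon Y\to X$, the pullback of $V_i\hookrightarrow X$ is the inclusion of $f^{-1}(V_i)$, and given a compact $L\subseteq Y$ one applies the hypothesis to the compact subset $f(L)\subseteq X$ and pulls the resulting open refinement of $(V_j\cap f(L))_j$ back along $f|_L$, using that preimages of open subsets of $f(L)$ are open in $L$. Transitivity is the delicate axiom: given $(V_i\hookrightarrow X)_i\in\mathcal{B}(X)$ and $(V_{ij}\hookrightarrow V_i)_j\in\mathcal{B}(V_i)$ for each $i$, fix a compact $K\subseteq X$; pass to a finite open cover $\{W_m\}_{m=1}^{n}$ of $K$ refining $(V_i\cap K)_i$, with $W_m\subseteq V_{i(m)}$; apply the shrinking lemma — compact Hausdorff spaces are normal — to get an open cover $\{W'_m\}$ of $K$ with each $L_m:=\overline{W'_m}$ a compact subset of $V_{i(m)}$; apply the hypothesis $(V_{i(m)j}\hookrightarrow V_{i(m)})_j\in\mathcal{B}(V_{i(m)})$ to the compact set $L_m$ to obtain a finite open cover of $L_m$ refining $(V_{i(m)j}\cap L_m)_j$; intersecting its members with $W'_m$ yields a finite open cover of $K$ refining $(V_{ij}\cap K)_{i,j}$. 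Since $K$ was arbitrary, $(V_{ij}\hookrightarrow X)_{i,j}\in\mathcal{B}(X)$.

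For step (2), the crucial input is a standard fact about sheafification \cite{Ieke}: since $\ju(X)=\yk(X)$ is already a $\kpt$-sheaf, the $\kpt$-sheafification of $\ju(A)$ equals $\yk(X)$ iff $\ju(A)$ is $\kpt$-dense in $\yk(X)$, i.e. iff for every $K\in\ch$ and every $g\in\Hom_\cg(K,X)$ the pulled-back sieve $g^{*}A$, restricted to $\ch$, is $\kpt$-covering. When $A$ is the sieve on $X$ generated by a family $(V_i\hookrightarrow X)_i$ of subset-inclusions, $g^{*}A$ is the sieve on $K$ generated by the inclusions $g^{-1}(V_i)\hookrightarrow K$; since $\kpt$ has a basis of finite covers by compact neighbourhoods, the restriction of $g^{*}A$ to $\ch$ is $\kpt$-covering precisely when the family $(g^{-1}(V_i))_i$ of subsets of $K$ admits a refinement by an open cover of $K$ (one direction by passing to interiors of compact neighbourhoods lying in some $g^{-1}(V_i)$, the other by local compactness of $K$). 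Finally I would check that quantifying this over all continuous $g\colon K\to X$ ($K\in\ch$) is equivalent to quantifying it over all inclusions of compact subsets $K\hookrightarrow X$: one implication is immediate, and for the other, given $g$ one applies the inclusion case to $g(K)\subseteq X$ and pulls the open refinement back along $g$ (using $g^{-1}(g(K))=K$). As $g^{-1}(V_i)=V_i\cap K$ for an inclusion, this is exactly the condition of Definition \ref{dfn:cgcover}, the finiteness of $J(K)$ being automatic from compactness of $K$. Hence a family of subset-inclusions lies in $\mathcal{B}(X)$ iff the sieve it generates is $\cgt$-covering.

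Combining these, any sieve containing a $\mathcal{B}$-cover contains a $\cgt$-covering sieve, hence is $\cgt$-covering, so $\mathscr{T}\subseteq\cgt$; conversely, given a $\cgt$-covering sieve $S$ on $X$, for each compact $K\subseteq X$ the restriction of $\iota_K^{*}S$ to $\ch$ is $\kpt$-covering, so — using the compact-neighbourhood basis of $\kpt$ — there are finitely many compact neighbourhoods of $K$ whose inclusions into $X$ lie in $S$ and whose interiors cover $K$, and the union over all compact $K\subseteq X$ of all these inclusions is a $\mathcal{B}$-cover of $X$ contained in $S$, so $\cgt\subseteq\mathscr{T}$. I expect the main obstacle to be the transitivity axiom — in particular the need to ``fatten'' the open sets of a refining cover to compact sets, via normality and the shrinking lemma, before the second layer of covers can be applied — together with the bookkeeping in step (2), where one must carefully distinguish sieves on $\cg$ from their restrictions along $\ju$ and exploit that $\kpt$ has a basis of covers by compact neighbourhoods even though open subsets of a compact Hausdorff space need not themselves be compact Hausdorff.
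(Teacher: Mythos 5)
Your proof is correct, and its core coincides with the paper's: both translate the criterion ``the $\kpt$-sheafification of the restricted sieve is $\yk\left(X\right)$'' into the pointwise condition that every map $h:K \to X$ from a compact Hausdorff space factors through the family locally on an open cover of $K$, and both reduce arbitrary such $h$ to inclusions of compact subsets by passing to the image $h\left(K\right)$ and pulling an open refinement back. The differences are in packaging. You invoke the standard dense-subpresheaf characterization of when sheafification of a mono into a sheaf is an isomorphism, whereas the paper runs the same fact through an explicit auxiliary presheaf $\tilde S_f$ built from the plus construction; these are interchangeable. More substantially, the paper never verifies the pretopology axioms for $\mathcal{B}$: it works directly with the already-constructed topology $\cgt$, characterizes its covering sieves as those generated by arbitrary ``large'' families $\left(f_i:Q_i \to X\right)$ with the local-factorization property over compacta, and then refines any such family by a $\mathcal{B}$-family (finite open covers of each compact subset whose members include into $X$ through the $f_i$), so that ``covering iff it contains a $\mathcal{B}$-family'' is obtained without a separate stability or transitivity check. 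Your step (1), including the shrinking-lemma argument for transitivity, is correct and is genuinely needed if one takes the textbook definition of a basis as a pretopology generating the topology; note, though, that given your step (2) the axioms could also be bypassed the way the paper does. What your route buys is an explicit verification that $\mathcal{B}$ is a pretopology and a cleaner separation of formal topos-theoretic input from point-set topology; what the paper's route buys is brevity and a characterization valid for arbitrary families of maps rather than only subset inclusions. Your extraction of a $\mathcal{B}$-family inside a given covering sieve via finite covers by compact neighborhoods is a minor variant of the paper's refinement step, which uses finite open covers of each compact subset instead.
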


\begin{proof}

Let $\left(f_i:Q_i \to X\right)$ be a class of maps into $X$. We denote the sieve it generates by $S_f$. For any compactly generated Hausdorff space $Y$, we have

$$S_f\left(Y\right)=\left\{ {h:Y\to X\mbox{ such that }h\mbox{ factors through }f_i\mbox{ for some }i } \right\}.$$
So, $S_f$ is a covering sieve if and only if when restricted to $\kp$, its $\kpt$-sheafification is isomorphic to $\yk\left(X\right)$. We first note that $\ju\left(S_f\right)$ is clearly a $\kpt$-separated presheaf. Hence, its sheafification is the same as $\ju\left(S_f\right)^{+}$. Since $\yk\left(X\right)$ is a sheaf, the canonical map $$\ju\left(S_f\right) \rightarrowtail \yk\left(X\right)$$ factors uniquely as

$$\xymatrix{\ju\left(S_f\right) \hspace{1mm} \ar[d] \ar@{>->}[r] & \yk\left(X\right)\\
\ju\left(S_f\right)^{+} \ar@{>->}[ur] &}.$$
It suffices to see when the map $\ju\left(S_f\right)^{+} \rightarrowtail \yk\left(X\right)$ is an epimorphism.

Let $\tilde S_f$ be the presheaf on $\kp$

$$\tilde S_f \left(T\right)=\left\{\mathcal{U}=\left(U_i\right)_{j=1}^{n}, \left(a_j \in S_f\left(U_j\right)\right)_{j=1}^{n} \:\: | \:\: a_i|_{U_{ij}}=a_j|_{U_{ij}} \mbox{ for all i,j} \right\}.$$
Then the map $\ju\left(S_f\right)^{+} \rightarrowtail \yk\left(X\right)$ fits in a diagram
$$\xymatrix{\tilde S_f \ar@{->>}[d] \ar[r] & \yk\left(X\right)\\
\ju\left(S_f\right)^{+} \ar@{>->}[ur] &}.$$
It suffices to see when the canonical map $\tilde S_f \to \yk\left(X\right)$ is point-wise surjective. This is precisely when for any map $h:K \to X$ from a compact Hausdorff space $K \in \kp$, there exists an open cover $\left(U_j\right)_j$ of $K$ such that for all $j$, $h|_{U_j}$ factors through $f_i$ for some $i$. Classes of maps with codomain $X$ with this property constitute a large basis for the Grothendieck topology $\cgt$. It is in fact maximal in the sense that $S$ is a covering sieve if and only if it is one generated by a large cover of this form. We will now show that any such large covering family has a refinement by one of the form of the lemma.

Let $\left(f_i:Q_i \to X\right)$ denote such a (possibly large) family and let $$\left(i_\alpha:K_\alpha \hookrightarrow X\right)$$ denote the inclusion of all compact subsets of $X$. Then for each $\alpha$, there exists a finite open cover of $K_\alpha$, $\left(\mathcal{O}_j^\alpha\right),$ such that the inclusion of each $\mathcal{O}_j^\alpha$ into $X$ factors through some $$f_{j,\alpha}:Q_{j,\alpha} \to X$$ via a map $$\lambda_j^\alpha:\mathcal{O}_j^\alpha \to Q_{j,\alpha}.$$
Let $\mathcal{U}:=\left(\mathcal{O}_j^\alpha \hookrightarrow X\right)_{j,\alpha}$. Let $g:L \to X$ be a map with $L \in \kp$. Then $g\left(L\right)=K_\alpha$ for some $\alpha$. Let $$\mathcal{V}_L^{g}:=\left(g^{-1}\left(\mathcal{O}_j^\alpha\right)\right).$$ Then $\mathcal{V}_L^{g}$ is an open cover of $L$ such that the restriction of $g$ to any element of the cover factors through the inclusion of some $\mathcal{O}_j^\alpha$ into $X$. Hence the sieve generated by $\mathcal{U}$ is a covering sieve for $\cgt$ which refines the sieve generated by $\left(f_i:Q_i \to X\right)$.

\end{proof}

We have the following obvious proposition whose converse is not true:

\begin{prop}\label{prop:opcov}
Any open cover of a space is also a $\cgt$ cover.
\end{prop}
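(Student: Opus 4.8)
The plan is to verify the condition of Definition \ref{dfn:cgcover} directly. Let $\left(U_i \hookrightarrow X\right)_{i \in I}$ be an open cover of a compactly generated Hausdorff space $X$, and let $K$ be an arbitrary compact subset of $X$. First I would observe that each $U_i \cap K$ is open in the subspace topology on $K$, so $\left(U_i \cap K\right)_{i \in I}$ is an open cover of $K$. By compactness of $K$, there is a finite subset $J\left(K\right) \subseteq I$ with $K = \bigcup_{j \in J\left(K\right)} \left(U_j \cap K\right)$, i.e. $\left(U_j \cap K\right)_{j \in J\left(K\right)}$ is itself a (finite) open cover of $K$. Since every open cover trivially refines itself, the family $\left(U_j \cap K\right)_{j \in J\left(K\right)}$ can be refined by an open cover of $K$, which is precisely what Definition \ref{dfn:cgcover} requires. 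As $K$ was arbitrary, $\left(U_i \hookrightarrow X\right)_{i \in I}$ is a $\cgt$-cover.

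There is essentially no obstacle here; the only point worth flagging is the (harmless) logical subtlety that the defining condition asks for refinability by an open cover, and this is satisfied vacuously by the finite subcover itself, so no genuine refinement needs to be produced. One could alternatively route the argument through the sieve-theoretic characterization established in the proof of the preceding lemma: for any map $h:K \to X$ from a compact Hausdorff $K$, the sets $h^{-1}\left(U_i\right)$ form an open cover of $K$ and $h$ restricted to each $h^{-1}\left(U_i\right)$ factors through $U_i \hookrightarrow X$, so the sieve generated by an open cover is a covering sieve of the maximal form; but the direct verification above is shorter and self-contained.

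(The converse fails, as already noted before the statement: e.g. the inclusion of all compact subsets of a non-locally-compact space is a $\cgt$-cover but not an open cover, cf. Proposition \ref{prop:loccov}.)
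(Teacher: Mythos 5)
Your proof is correct and coincides with what the paper intends: the paper states this proposition without proof as obvious, and your direct check of Definition \ref{dfn:cgcover} (restrict the open cover to a compact subset $K$, extract a finite subcover by compactness, and observe that this finite open cover of $K$ trivially refines itself) is exactly the implicit argument.
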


In particular, one cover that is quite useful is the following.

\begin{cor}\label{cor:comcov}
For any compactly generated Hausdorff space, the inclusion of all compact subsets is a $\cgt$-cover.
\end{cor}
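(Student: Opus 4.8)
The plan is to verify directly that the family $\left(i_\alpha : K_\alpha \hookrightarrow X\right)$, where $K_\alpha$ ranges over all compact subsets of $X$, satisfies the criterion of Definition \ref{dfn:cgcover}. So I fix an arbitrary compact subset $K$ of $X$ and must exhibit a (finite) subfamily of the $K_\alpha$ whose traces on $K$ admit a refinement by an open cover of $K$. The key observation is that $K$ itself is one of the compact subsets in the indexing family; that is, $K = K_{\alpha_0}$ for some index $\alpha_0$. Then $K_{\alpha_0} \cap K = K$, and the single-element family $\left(K\right)$ is trivially an open cover of $K$ (it is the whole space $K$, which is open in itself). Taking $J(K) = \{\alpha_0\}$ therefore does the job, and the family is a $\cgt$-cover by definition.

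The only point that requires a word is why $K$ being compact as a subspace of $X$ makes it appear as one of the $K_\alpha$: this is immediate since the indexing family was defined to consist of \emph{all} compact subsets of $X$, and $X$ is Hausdorff (being an object of $\cg$), so compact subsets are closed and the notion is unambiguous. One could alternatively argue without the tautology by noting that for any compact $K \subseteq X$ one may cover $K$ by finitely many compact neighborhoods in $X$ (using that $\cg$-objects are built from compact Hausdorff pieces, or simply that $K$ has a finite cover by subsets of the form $K \cap W$ with $W$ compact), but the direct argument is cleanest.

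There is really no obstacle here: the statement is an essentially immediate specialization of Definition \ref{dfn:cgcover}, designed precisely so that this cover is available. If anything, the ``hard part'' is purely presentational — making clear that the family indexing a $\cgt$-cover is allowed to be a proper class (a large cover, in the terminology of the preceding lemma's proof) and that this causes no difficulty, since the relevant condition is checked one compact subset $K$ at a time and only ever uses a finite subfamily. Once that is granted, the proof is a single line.
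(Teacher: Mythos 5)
Your proof is correct and matches what the paper intends: the corollary is stated without proof precisely because, as you observe, for each compact $K \subseteq X$ the subset $K$ itself occurs in the family, so the single-element subfamily $J(K)=\{\alpha_0\}$ with $K_{\alpha_0}=K$ trivially satisfies the condition of Definition \ref{dfn:cgcover}. Nothing further is needed.
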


However, for the category $\lk$ of locally compact Hausdorff spaces, it suffices to work with open covers:

\begin{prop}
Every $\cgt$-cover of a locally compact Hausdorff space $X \in \lk$ can be refined by an open covering.
\label{p:lcc}
\end{prop}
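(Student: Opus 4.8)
The plan is to take an arbitrary $\cgt$-cover $\left(\alpha_i : V_i \hookrightarrow X\right)_{i \in I}$ of a locally compact Hausdorff space $X$ and produce from it an honest open cover of $X$ that refines it. By Definition \ref{dfn:cgcover}, for every compact subset $K \subseteq X$ there is a finite $J(K) \subseteq I$ such that $\left(V_j \cap K\right)_{j \in J(K)}$ admits an open refinement of $K$; the job is to glue these local open refinements into a global open cover without leaving the $V_i$'s. First I would exploit local compactness: every point $x \in X$ has a compact neighborhood $K_x$, so $x$ lies in the interior $\mathrm{int}(K_x)$, and applying the $\cgt$-cover condition to $K_x$ gives a finite family of open subsets of $K_x$ refining $\left(V_j \cap K_x\right)_{j \in J(K_x)}$ and covering $K_x$. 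Since $x \in \mathrm{int}(K_x)$, at least one of these, call it $W_x$, contains $x$, and by intersecting with $\mathrm{int}(K_x)$ we may take $W_x$ to be open in $X$ (not merely in $K_x$) while still having $W_x \subseteq V_{i(x)}$ for some index $i(x) \in I$.

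The family $\left(W_x\right)_{x \in X}$ is then an open cover of $X$ (each $x$ lies in its own $W_x$), and by construction each $W_x$ is contained in some $V_{i(x)}$, so the assignment $x \mapsto i(x)$ exhibits $\left(W_x\right)_{x}$ as an open refinement of the original $\cgt$-cover. To conclude, I would invoke the fact (essentially the ``large basis'' description in the proof of the preceding lemma, together with Proposition \ref{prop:opcov}) that a family refining a $\cgt$-cover is itself a $\cgt$-cover, and that an open cover generates the same sieve whether viewed in the open-cover topology or in $\cgt$; hence the sieve generated by $\left(W_x\right)_x$ is a $\cgt$-covering sieve refining the one generated by $\left(V_i\right)_{i \in I}$, which is exactly the assertion. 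It may be cleanest to phrase the whole argument at the level of sieves: show directly that the sieve $S$ generated by the $V_i$ contains an open covering sieve, by checking the pointwise-surjectivity criterion from the lemma — for $h : L \to X$ with $L$ compact, pull back the $W_x$ along $h$.

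The main obstacle is the passage from ``open in $K_x$'' to ``open in $X$.'' The $\cgt$-cover condition only hands us subsets of $K_x$ that are open \emph{in the subspace topology} of $K_x$, and a set open in a compact neighborhood need not be open in $X$. This is precisely where local compactness does the real work: restricting to $\mathrm{int}(K_x)$, which is open in $X$, repairs the issue, since a subset of $\mathrm{int}(K_x)$ that is open in $K_x$ is automatically open in $X$. One should be slightly careful that after intersecting $W_x$ with $\mathrm{int}(K_x)$ the containment $W_x \cap \mathrm{int}(K_x) \subseteq V_{i(x)}$ still holds and that $x$ remains in it — both are immediate since $x \in \mathrm{int}(K_x)$ and intersection only shrinks sets. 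No compactness or paracompactness of $X$ itself is needed; the argument is entirely pointwise-local, which is consistent with the remark that being a (co)cover is local on the target.
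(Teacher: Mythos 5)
Your construction is correct and is essentially the paper's own argument: the paper covers $X$ by compact sets $K_l$ whose interiors form an open cover and takes the family $\left(V_j \cap \mathrm{int}\left(K_l\right)\right)$, which is exactly your pointwise device of intersecting the open refinement on a compact neighborhood $K_x$ with $\mathrm{int}\left(K_x\right)$ to get sets open in $X$. The closing discussion about sieves (including the claim that a family refining a $\cgt$-cover is itself a $\cgt$-cover, which is not true for arbitrary refining families) is unnecessary: once you have the open cover $\left(W_x \cap \mathrm{int}\left(K_x\right)\right)_{x \in X}$ refining $\left(V_i\right)_{i \in I}$, the proposition is already proved.
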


\begin{proof}
Let $X \in \lk$ and let $\mathcal{V}=\left(\alpha_i:V_i \hookrightarrow X\right)_{i\in I}$ be a $\cgt$-cover of $X$. Let $\left(K_l\right)$ be a topological covering of $X$ by compact subsets such that the interiors $int\left(K_l\right)$ constitute an open cover for $X$. Then for each $K_l$, there exists a finite subset $J\left(K_l\right) \subseteq I$ such that

$$\left(V_j \cap K_l\right)_{j \in J\left(K_l\right)}$$
can be refined by an open cover $\left(W_j\right)_{j \in J\left(K_l\right)}$ for $K_l$ such that the inclusion of each $W_j$ into $X$ factors through the inclusion of $V_j$. Let $\mathcal{U}:=\left(W_j\right)_{l, j \in J\left(K_l\right)}$. Then $\mathcal{U}$ is an open cover of $X$ refining $\mathcal{V}$.
\end{proof}

We can now define the $\cgt$-sheafification functor $a_{\cgt}$ either by the covering sieves, or by using the basis $\mathcal{B}$ (i.e. both will give naturally isomorphic functors). Let $\Sgs$ denote the category of $\cgt$-sheaves. Then we have an embedding of topoi given by

$$\xymatrix@C=1.9cm{\Sgs \ar@<-0.5ex>[r]_{\ell} & \Sg \ar@<-0.5ex>[l]_{a_{\cgt}}},$$
where $\ell:\Sgs \hookrightarrow \Sg$ is the inclusion of the category of sheaves.

By the previous observation that open covers are $\cgt$-covers, we also have

$$\Sgs \subset Sh\left(\cg\right),$$
where $Sh\left(\cg\right)$ is the category of sheaves on $\cg$ with respect to the open cover topology.

By construction, we have the following theorem:

\begin{thm}\label{thm:eqlshvs}
There is an equivalence of topoi

$$\xymatrix@C=1.9cm{Sh\left(\kp\right) \ar@<-0.5ex>[r]_(0.45){a_{\cgt}\circ\jd \circ i} & \Sgs \ar@<-0.5ex>[l]_(0.558){a \circ \ju \circ \ell}}$$
such that
$$\xymatrix@C=1.9cm{Sh\left(\kp\right) \ar@<-0.5ex>[r]_(0.45){a_{\cgt}\circ\jd \circ i} & \Sgs \ar@<-0.5ex>[l]_(0.558){a \circ \ju \circ \ell} \ar@<-0.5ex>[r]_(0.55){\ell} & \Sg \ar@<-0.5ex>[l]_(0.45){a_{\cgt}}}$$
is a factorization of
$$\xymatrix@C=1.5cm{Sh\left(\kp\right) \ar@<-0.5ex>[r]_{\jd \circ i} & \Sg \ar@<-0.5ex>[l]_{a \circ \ju}}$$
(up to natural isomorphism).
\end{thm}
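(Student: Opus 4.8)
The plan is to assemble the equivalence from pieces already in hand, rather than to verify it by hand on sheaves. First I would observe that the factorization theorem for geometric embeddings (\cite{Ieke}) tells us that the embedding $\jd\circ i : Sh\left(\kp\right) \rightarrowtail \Sg$ factors \emph{uniquely} (up to natural isomorphism) as a surjection followed by an embedding once we specify the intermediate topos; and by the very construction of $\cgt$, the topos $\Sgs$ was defined so that $\ell : \Sgs \rightarrowtail \Sg$ is precisely the embedding part and $Sh_{\cgt}:\Sg\to\Sgs$ the surjection part. So the content of the theorem is really that $Sh\left(\kp\right)$, sitting inside $\Sg$ via $\jd\circ i$, has the same image as $\ell:\Sgs\hookrightarrow\Sg$; equivalently, that a presheaf $F$ on $\cg$ is a $\cgt$-sheaf if and only if it lies in the essential image of $\jd\circ i$.

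The key steps, in order, would be: (1) recall that $Sh_{\cgt}$ was \emph{defined} as the sheafification for the Grothendieck topology $\cgt$, and that $\cgt$ was in turn extracted (via the universal closure operation on $\Sg$) precisely from the composite embedding $\left(\jd\circ i,\ Sh\circ\ju\right)$; hence by the general construction of \cite{Ieke} the reflective subcategory cut out by $\cgt$ is exactly $\jd\left(Sh\left(\kp\right)\right)$. (2) Check that $Sh_{\cgt}\circ\jd\circ i$ lands in $\Sgs$ and that $Sh\circ\ju\circ\ell$ lands in $Sh\left(\kp\right)$: the first because $\jd\circ i$ already produces $\cgt$-sheaves (by step 1, its image is inside the $\cgt$-local objects, so $Sh_{\cgt}$ acts as the identity on it up to iso), the second because $\ju$ of a sheaf restricted to $\kp$ is $\kpt$-separated and $Sh$ then makes it a $\kpt$-sheaf, as in the Remark following Proposition~\ref{prop:opcov}. (3) Exhibit the unit and counit: for $F\in\Sgs$, the counit $Sh_{\cgt}\circ\jd\circ i\circ Sh\circ\ju\circ\ell\,(F)\to F$ is an iso because $F$ is already a $\cgt$-sheaf and the composite just recomputes it through the equivalent presentation over $\kp$; for $G\in Sh\left(\kp\right)$, the unit $G\to Sh\circ\ju\circ\ell\circ Sh_{\cgt}\circ\jd\circ i\,(G)$ is an iso because $\jd$ is fully faithful (so nothing is lost passing to $\Sg$) and $i$ is fully faithful (so nothing is lost being a $\kpt$-sheaf), while the intermediate $Sh_{\cgt}$ and $\ell$ cancel on the already-$\cgt$-sheaf $\jd i G$. (4) Conclude that the two composites are mutually inverse equivalences and that pasting $\ell\circ(-)$ on the $\Sgs$ side recovers $\jd\circ i$ and $Sh_{\cgt}\circ(-)$ on the $\Sg$ side recovers $Sh\circ\ju$, which is exactly the asserted factorization.

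The main obstacle — really the only non-bookkeeping point — is step (1)/(3): showing that the universal closure operation we extracted on representables (via the Claim that $\eta_X$ is an iso and the ensuing description of covering sieves as those $m:A\rightarrowtail X$ with $Sh\,\ju(A)\xrightarrow{\sim}\ju X$) genuinely reconstructs the \emph{same} reflective subcategory of $\Sg$ as $\jd\left(Sh\left(\kp\right)\right)$, i.e. that the closure operation determined by the topology $\cgt$ agrees with the closure operation $A\mapsto\bar A$ defined by the pullback against $\eta_F$ for \emph{all} presheaves $F$, not just representables. This is handled by the standard fact (\cite{Ieke}) that a universal closure operation on a presheaf topos is determined by its values on subobjects of representables, together with the observation that $\Sg$ has a generating set of representables; once that reduction is in place, the agreement on representables established in the lemmas preceding Definition~\ref{dfn:cgcover} finishes it. The rest is formal manipulation of the two adjunctions $\left(Sh\circ\ju,\ \jd\circ i\right)$ and $\left(Sh_{\cgt},\ \ell\right)$ and their units and counits.
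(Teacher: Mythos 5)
Your proposal is correct and follows essentially the same route as the paper: the paper's proof is simply ``by construction,'' since $\cgt$ was extracted via the factorization theorem of \cite{Ieke} precisely so that the category of $\cgt$-sheaves is $\jd\left(Sh\left(\kp\right)\right)$, which is exactly the content of your steps (1)--(4). Your extra care in step (1)/(3) about the closure operation being determined by its values on (sieves of) representables is a reasonable unpacking of what the paper leaves implicit, but it is not a different argument.
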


Note that the essential image of $\ell$ is the same as the essential image of $\jd \circ i$. Hence, a presheaf $F$ in $\Sg$ is a $\cgt$-sheaf if and only if the unit $\eta$ of $a \circ \ju \rightt \jd \circ i$ is an isomorphism at $F$. We have the following immediate corollary.

\begin{cor}
The Grothendieck topology $\cgt$ is subcanonical.
\label{cor:subcan}
\end{cor}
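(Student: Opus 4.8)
The plan is to deduce the corollary immediately from two facts already in place: the description of $\cgt$-sheaves given in the remark following Theorem~\ref{thm:eqlshvs}, and the Claim (that the unit $\eta_X$ is an isomorphism) established just before Definition~\ref{dfn:cgcover}. Recall that, because the essential image of the inclusion $\ell:\Sgs \hookrightarrow \Sg$ coincides with that of $\jd \circ i$, a presheaf $F \in \Sg$ is a $\cgt$-sheaf precisely when the component at $F$ of the unit $\eta$ of the adjunction $Sh \circ \ju \rightt \jd \circ i$ is an isomorphism. So it suffices to check this for an arbitrary representable presheaf $\Hom_{\cg}(-,X)$, $X \in \cg$, and this is exactly the content of the Claim: the restriction $\ju X$ is already a $\kpt$-sheaf, hence $\jd Sh \ju X \cong \jd \yk(X)$, and since $\yk$ is fully faithful the latter is canonically isomorphic to $X$, the comparison map being $\eta_X$. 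Thus every representable is a $\cgt$-sheaf, i.e. $\cgt$ is subcanonical.

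It is worth emphasizing why one needs this machinery and why the classical subcanonicity of the open cover topology does not suffice by itself: $\cgt$ is strictly finer than the open cover topology (Proposition~\ref{prop:opcov} and Corollary~\ref{cor:comcov}), so being a $\cgt$-sheaf is a priori a stronger requirement than being a sheaf for open covers, and the Claim is precisely what bridges this gap.

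An alternative, hands-on proof would verify the sheaf axiom directly from Definition~\ref{dfn:cgcover}: given a $\cgt$-cover $(V_i \hookrightarrow X)$ of $X \in \cg$ and a compatible family of maps $V_i \to Y$, one would glue them by probing $X$ with compact Hausdorff spaces $K \to X$ — on each such $K$ the family refines to an honest open cover, where ordinary gluing applies — and then invoke that $X$ carries the final topology with respect to these probes to see the glued map is continuous. This is workable but forces one to track the Kelleyfication in the fibre products $V_i \times_X V_j$, so I would favor the conceptual argument above; in any case there is no genuine obstacle here, as the substantive work was already done in Theorem~\ref{thm:eqlshvs} and the Claim. The only point demanding care is bookkeeping: confirming that the natural transformation $\eta$ appearing in the characterization of $\cgt$-sheaves is literally the same one whose component at a representable was computed in the Claim, which it is.
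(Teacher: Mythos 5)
Your proof is correct and is essentially the paper's own argument: the paper derives the corollary immediately from the observation that $F$ is a $\cgt$-sheaf iff the unit of $Sh \circ \ju \rightt \jd \circ i$ is an isomorphism at $F$, combined with the Claim that $\eta_X$ is an isomorphism for every representable $X$. Your additional remarks (why open-cover subcanonicity alone does not suffice, and the sketched direct gluing argument) are fine but not needed.
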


\begin{lem}
If $F$ is a $\cgt$-sheaf, then $\ju \left(F\right)$ is a $\kpt$-sheaf.
\end{lem}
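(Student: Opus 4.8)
The plan is to exploit the equivalence of topoi from Theorem \ref{thm:eqlshvs} together with the remark immediately preceding it, which identifies $\cgt$-sheaves inside $\Sg$ as exactly those presheaves at which the unit $\eta$ of $Sh\circ\ju \rightt \jd\circ i$ is an isomorphism. So suppose $F$ is a $\cgt$-sheaf. Then $F$ lies in the essential image of $\jd\circ i$, i.e. there is a $\kpt$-sheaf $G$ on $\kp$ with $F \cong \jd\left(i\left(G\right)\right) = \jd\left(G\right)$ (identifying $G$ with its image under the inclusion $i:Sh\left(\kp\right)\hookrightarrow\Skp$). The goal then reduces to showing that $\ju\left(\jd\left(G\right)\right)$ is again a $\kpt$-sheaf, which is exactly the statement that the counit $\varepsilon:\ju\jd \Rightarrow \mathrm{id}$ of the embedding $\left(\ju,\jd\right)$ restricts to an isomorphism on the subcategory $Sh\left(\kp\right)$.

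The key step is the observation — already recorded in the excerpt — that since $j:\kp\hookrightarrow\cg$ is fully faithful, the counit $\varepsilon$ of the adjunction $\ju\rightt\jd$ is \emph{always} an isomorphism: this is precisely the general fact that for a geometric embedding the direct image is fully faithful, equivalently $\ju\jd\cong\mathrm{id}_{\Skp}$. Hence for any presheaf $G$ on $\kp$ (sheaf or not), $\ju\left(\jd\left(G\right)\right)\cong G$. Applying this to our $\kpt$-sheaf $G$ gives $\ju\left(F\right)\cong\ju\left(\jd\left(G\right)\right)\cong G$, which is a $\kpt$-sheaf by hypothesis. This finishes the argument.

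Concretely, one can also verify the isomorphism $\ju\left(F\right)\cong G$ by hand: by the explicit formula $\jd\left(G\right)\left(X\right)=\Hom_{\Skp}\left(\yk\left(X\right),G\right)$, restricting along $\ju$ and evaluating at a compact Hausdorff space $K$ gives $\ju\left(\jd\left(G\right)\right)\left(K\right)=\Hom_{\Skp}\left(\yk\left(K\right),G\right)$, and since $G$ is a sheaf and $\yk\left(K\right)$ is representable this is $\cong G\left(K\right)$ by Yoneda. Naturality in $K$ is immediate, so $\ju\left(F\right)\cong G$ as presheaves on $\kp$, hence $\ju\left(F\right)$ is a $\kpt$-sheaf.

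The argument has essentially no hard part: the entire content is packaging the standard topos-theoretic fact that the direct image of a geometric embedding is fully faithful, specialized to the embedding $\jd$. The only thing requiring a moment's care is making sure the identification $F\cong\jd\left(G\right)$ with $G$ a genuine $\kpt$-\emph{sheaf} (not merely a presheaf) is legitimate — this is exactly the characterization of the essential image of $\jd\circ i$ via the unit $\eta$, which is what the remark after Theorem \ref{thm:eqlshvs} provides.
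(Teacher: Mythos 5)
Your proposal is correct and follows essentially the same route as the paper: both identify $F$ with $\jd$ of a $\kpt$-sheaf (the paper takes $G = Sh\,\ju F$ via the unit $\eta_F$, you take an abstract $G$) and then cancel $\ju\jd$ using that the counit of $\ju \rightt \jd$ is an isomorphism because $j$ is fully faithful. The only quibble is cosmetic: in your Yoneda verification the hypothesis that $G$ is a sheaf is not needed, representability of $\yk\left(K\right)$ alone gives $\Hom_{\Skp}\left(\yk\left(K\right),G\right)\cong G\left(K\right)$.
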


\begin{proof}
Since $F$ is a $\cgt$-sheaf, it is in the essential image of $\jd \circ i$, hence, via $\eta_F$, we have $$F \cong \jd a \ju \left(F\right).$$
By applying $\ju$ we have
$$\ju\left(F\right) \cong \ju \jd a \ju \left(F\right).$$
Since the co-unit $\varepsilon\left(j\right)$ of $\ju \rightt \jd$ is an isomorphism, this yields
$$\ju\left(F\right)\cong a \ju \left(F\right).$$
\end{proof}

\begin{cor}
If $F$ is a presheaf in $\Sg$,

\begin{equation*}
a_{\cgt}\left(F\right)\cong \jd a \left(\ju F\right).
\end{equation*}

If $F \in Sh\left(\cg\right)$ is a sheaf in the open cover topology then its $\cgt$-sheafification is given by $\jd \ju F$.
\end{cor}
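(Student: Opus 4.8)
The plan is to prove the general identity $Sh_{\cgt}(F)\cong\jd Sh(\ju F)$ first, and then read off the statement about open-cover sheaves as a special case.

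For the general identity I would check that $\jd Sh(\ju F)$, equipped with the canonical morphism from $F$, enjoys the universal property of the $\cgt$-sheafification. That it is a $\cgt$-sheaf is immediate: $Sh(\ju F)$ lies in $Sh(\kp)$, so $\jd Sh(\ju F)$ lies in the essential image of $\jd\circ i$, which by the observation recorded just after Theorem \ref{thm:eqlshvs} is exactly the full subcategory of $\cgt$-sheaves. For the universal property, write $\eta_F\colon F\to\jd Sh\ju F$ for the unit $\eta$ of $Sh\circ\ju\rightt\jd\circ i$ at $F$, and let $G$ be an arbitrary $\cgt$-sheaf, so that $G\cong\jd Sh\ju G$ (the unit $\eta_G$ being an isomorphism). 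Chaining the full faithfulness of $\jd$, the universal property of $\kpt$-sheafification ($Sh\rightt i$), and the adjunction $\ju\rightt\jd$ produces natural isomorphisms
$$\Hom_{\Sg}\big(\jd Sh\ju F,\,G\big)\cong\Hom_{\Skp}\big(Sh\ju F,\,Sh\ju G\big)\cong\Hom_{\Skp}\big(\ju F,\,Sh\ju G\big)\cong\Hom_{\Sg}\big(F,\,G\big),$$
and one checks the composite is precomposition with $\eta_F$. Together with the previous point this says precisely that $(\jd Sh\ju F,\eta_F)$ is the $\cgt$-sheafification of $F$. (Alternatively one may note that $F\mapsto\jd Sh\ju F$, viewed as a functor $\Sg\to\Sgs$, is the composite of the left adjoint $Sh\circ\ju$ with the equivalence of Theorem \ref{thm:eqlshvs}; hence it is left adjoint to $\ell$, hence naturally isomorphic to $Sh_{\cgt}$.)

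For the second assertion, suppose $F\in Sh(\cg)$. By the identity just proved it suffices to show that $\ju F$ is already a $\kpt$-sheaf, for then $Sh(\ju F)\cong\ju F$ and hence $Sh_{\cgt}(F)\cong\jd\ju F$. Recall that every $\kpt$-covering sieve of a compact Hausdorff space $K$ is the restriction $\ju(R)$ of an open-cover covering sieve $R$ of $K$ in $\cg$; so I must check that for each such $\ju(R)$ the map from $F(K)$ into the limit of $F$ over the elements $(L\to K)$ of $R$ with $L$ compact Hausdorff is an isomorphism. Since $F$ is a sheaf for the open cover topology, the analogous map into the limit over \emph{all} elements $(V\to K)$ of $R$ is an isomorphism; so it is enough to see that the full subcategory of elements of $R$ with compact Hausdorff domain is initial in the category of all elements of $R$. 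That is elementary: it is nonempty (every space admits a map from a point, and $\emptyset$ is compact Hausdorff), and it is connected, because given elements $L_1\to V$ and $L_2\to V$ over $K$ with $L_1,L_2$ compact Hausdorff, the disjoint union $L_1\sqcup L_2$ is again compact Hausdorff and both maps factor through it. Hence the two limits agree and $\ju F$ is a $\kpt$-sheaf.

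I expect the only step requiring genuine care to be this last one — the reduction of the $\kpt$-sheaf condition on $\ju F$ to the open-cover sheaf condition on $F$ via initiality; everything preceding it is formal bookkeeping with the adjunctions and factorizations already constructed in Section \ref{sec:cgtop}.
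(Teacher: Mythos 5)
Your proof of the general identity $Sh_{\cgt}\left(F\right)\cong \jd Sh\left(\ju F\right)$ is correct and is essentially the paper's own argument: one notes $\jd Sh\left(\ju F\right)$ lies in the image of $\jd\circ i$, hence is a $\cgt$-sheaf, and then runs the same chain of Hom-isomorphisms against an arbitrary $\cgt$-sheaf $G$ (the paper replaces $Sh\ju G$ by $\ju G$ via the preceding lemma, but your variant is equivalent).

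The second half, however, has a genuine gap, and it is exactly at the step you flagged. The set of matching families $\Hom_{\Skp}\left(\ju R,\ju F\right)$ is a limit indexed by the \emph{opposite} of the category of elements of $R$, because $\left(V,f\right)\mapsto F\left(V\right)$ is contravariant in the element. To compute such a limit on a full subcategory, the cofinality condition you need, after unwinding the variance, is: for every element $f:V\to K$ of $R$, the category of factorizations of $f$ through an element of $R$ with compact Hausdorff domain --- i.e.\ of maps $V\to L$ over $K$ with $L$ compact Hausdorff and $L\to K$ in $R$ --- must be nonempty and connected. What you verified instead (nonemptiness and connectedness of the category of maps $L\to V$ over $K$ with $L$ compact Hausdorff) is the dual criterion, the one relevant for \emph{colimits} over the category of elements, not for this limit. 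Worse, the correct condition is simply false in general, so the argument cannot be repaired by switching directions: take $K=\left[0,1\right]$ and $R$ the sieve generated by the open cover $\left\{\left[0,1\right),\left(0,1\right]\right\}$. The element $\left[0,1\right)\hookrightarrow K$ admits no map over $K$ into any compact Hausdorff element of $R$, since the image of such an element is a compact subset of $\left[0,1\right)$ or of $\left(0,1\right]$ and hence cannot contain $\left[0,1\right)$. So no purely diagrammatic cofinality argument can show that $\ju F$ is a $\kpt$-sheaf; the sheaf property of $F$ must be used in an essential, non-formal way.

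The conclusion itself is fine, and there are several ways to reach it. The paper's route is the unnumbered Remark in Section \ref{sec:Groth}: the $\kpt$-sheafification of the restriction $\ju F$ agrees with the restriction of the open-cover sheafification of $F$, which for a sheaf $F$ is $F$ itself, so $Sh\left(\ju F\right)\cong \ju F$ and the first identity gives $Sh_{\cgt}\left(F\right)\cong\jd\ju F$. Alternatively, one can argue directly that $\ju F$ is a $\kpt$-sheaf using the stated basis of $\kpt$ by finite covers $\left(T_i\hookrightarrow T\right)$ by compact neighborhoods: an open-cover sheaf satisfies descent along such a cover by gluing over the interiors $int\left(T_i\right)$ and using separatedness to identify the result on each closed piece $T_i$. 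A third option is to exploit that the generating opens $U_i\subseteq K$ are locally compact Hausdorff (being open in a compact Hausdorff space), so that Proposition \ref{p:lcc} and Proposition \ref{prop:loccov} let you recover the descent datum $s_i\in F\left(U_i\right)$ from the compact Hausdorff part of a matching family. Any of these works; the initiality claim as you stated it does not.
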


\begin{proof}
$$\jd a \left(\ju F\right)\cong \left(\jd \circ i\right) \circ a \left(\ju F\right),$$
so $\jd a \left(\ju F\right)$ is in the image of $\jd \circ i$, hence a $\cgt$-sheaf. Now, let $G$ be any $\cgt$-sheaf. Then:

\begin{eqnarray*}
\Hom_{\Sg}\left(\jd a \left(\ju F\right), G\right) &\cong& \Hom_{\Skp}\left(a \ju \left(F\right), \ju\left(G\right)\right)\\
&\cong& \Hom_{\Skp}\left(\ju \left(F\right), \ju\left(G\right)\right)\\
&\cong& \Hom_{\Sg}\left(F,\jd\ju\left(G\right)\right)\\
&\cong& \Hom_{\Sg}\left(F,G\right).
\end{eqnarray*}

\end{proof}

We end this subsection by noting ordinary sheaves and $\cgt$-sheaves agree on locally compact Hausdorff spaces:

Let $\varsigma$ denote the unit of the adjunction $\ju \rightt \jd$.

\begin{prop}\label{prop:loccov}
Let $F \in Sh\left(\cg\right)$ be a sheaf in the open cover topology and $X$ in $\lk$ a locally compact Hausdorff space. Then the map
$$\varsigma_F\left(X\right):F\left(X\right) \to \jd \ju F\left(X\right) \cong a_{\cgt}\left(F\right)\left(X\right)$$
is a bijection. In particular, $F$ and $a_{\cgt}\left(F\right)$ agree on locally compact Hausdorff spaces.
\end{prop}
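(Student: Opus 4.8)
\emph{Proof strategy.} The plan is to combine the preceding corollary (which identifies $Sh_{\cgt}(F)$ with $\jd\ju F$ and the comparison map with the unit $\varsigma$ of $\ju\rightt\jd$) with an explicit description of $\jd\ju F(X)$ as a limit of sections of $F$ over the compact subsets of $X$, and then to use local compactness of $X$ to show that the resulting canonical map out of $F(X)$ is a bijection. For the explicit description, note that any continuous map from a compact Hausdorff space into $X$ factors through its image, a compact subset of $X$, and that the compact subsets of $X$ form a directed poset under inclusion; hence in $\Skp$ (where colimits are computed pointwise) the restricted Yoneda presheaf $\yk(X)$ is the filtered colimit $\varinjlim_{K_\alpha\hookrightarrow X}\yk(K_\alpha)$ over the compact subsets $K_\alpha$ of $X$. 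Using $\jd(G)(X)=\Hom_{\Skp}(\yk(X),G)$ and the Yoneda lemma this gives
$$\jd\ju F(X)=\Hom_{\Skp}\!\left(\yk(X),\ju F\right)\cong\varprojlim_{K_\alpha\hookrightarrow X}\ju F(K_\alpha)=\varprojlim_{K_\alpha\hookrightarrow X}F(K_\alpha),$$
and unwinding the definition of the unit shows that under this identification $\varsigma_F(X)$ becomes the canonical map $F(X)\to\varprojlim_\alpha F(K_\alpha)$ induced by the restrictions $F(X)\to F(K_\alpha)$. So everything reduces to showing this map is bijective when $X$ is locally compact Hausdorff.

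To do that I would pick, for each $x\in X$, a compact neighbourhood $N_x\subseteq X$, so that $\{\operatorname{int}(N_x)\}_{x\in X}$ is an open cover of $X$. For injectivity: if $s,t\in F(X)$ have the same restriction to every compact subset of $X$, then in particular $s|_{N_x}=t|_{N_x}$, hence $s|_{\operatorname{int}(N_x)}=t|_{\operatorname{int}(N_x)}$ for all $x$, and $s=t$ since $F$ is separated for the open cover topology. For surjectivity: given a compatible family $(s_\alpha)_\alpha\in\varprojlim_\alpha F(K_\alpha)$, the sections $s_{N_x}|_{\operatorname{int}(N_x)}$ agree on overlaps, because $\operatorname{int}(N_x)\cap\operatorname{int}(N_y)$ lies in the compact set $N_x\cap N_y$, on which $s_{N_x}$ and $s_{N_y}$ already agree by compatibility; so the sheaf axiom yields a unique $s\in F(X)$ with $s|_{\operatorname{int}(N_x)}=s_{N_x}|_{\operatorname{int}(N_x)}$ for all $x$. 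It remains to see that $s|_{K_\alpha}=s_\alpha$ for every compact $K_\alpha$: for $x\in K_\alpha$ the relatively open set $W_x:=\operatorname{int}(N_x)\cap K_\alpha$ is contained in the compact set $N_x\cap K_\alpha$, and on $W_x$ both $s|_{K_\alpha}$ and $s_\alpha$ agree with $s_{N_x}$ — the former by the defining property of $s$, the latter by compatibility of the family; since the $W_x$ cover $K_\alpha$, separatedness of $F$ forces $s|_{K_\alpha}=s_\alpha$. The final ``in particular'' is then immediate, $\varsigma_F$ being a natural transformation that is bijective on every object of $\lk$.

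I expect the last verification to be the only real subtlety: the compact subsets $K_\alpha$ indexing the limit may have empty interior, so $\operatorname{int}(K_\alpha)$ need not cover $K_\alpha$ and the glued section cannot simply be restricted back to $K_\alpha$; the fix, and the only use of local compactness of the ambient space $X$, is to work with compact neighbourhoods $N_x$ of the points of $K_\alpha$ taken inside $X$, not inside $K_\alpha$. (Alternatively one could avoid the explicit colimit computation altogether by first observing, from Propositions \ref{p:lcc} and \ref{prop:opcov}, that the restriction of $\cgt$ to $\lk$ agrees with the open cover topology, so that the open-cover sheaf $F$ is automatically a $\cgt$-sheaf to the extent that locally compact Hausdorff spaces can detect; the argument above is just the hands-on version of this.)
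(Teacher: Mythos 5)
Your argument is correct, but it takes a genuinely different route from the paper. The paper's proof is a one-liner: by Proposition \ref{p:lcc} every $\cgt$-cover of a locally compact Hausdorff space is refined by an open cover, so open covers are cofinal among $\cgt$-covers of $X$ (and of its open subsets, which are again locally compact), and hence the (plus-construction) sheafification leaves the sections of an open-cover sheaf over such an $X$ unchanged — this is exactly the alternative you sketch in your closing parenthesis. What you do instead is bypass the sheafification machinery: you use the identification $Sh_{\cgt}(F)\cong\jd\ju F$, observe that $\yk\left(X\right)\cong\varinjlim_{K_\alpha\hookrightarrow X}\yk\left(K_\alpha\right)$ in $\Skp$ (valid for any Hausdorff $X$, since maps from compact Hausdorff spaces factor through their compact images and the compact subsets are directed), deduce $\jd\ju F\left(X\right)\cong\varprojlim_{K_\alpha}F\left(K_\alpha\right)$ with $\varsigma_F\left(X\right)$ the restriction map, and then prove bijectivity by hand, gluing over the open cover by interiors of compact neighbourhoods and checking compatibility on the compact sets $N_x\cap N_y$ and $N_x\cap K_\alpha$. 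This is where local compactness of the ambient $X$ enters, exactly as you point out, and all the auxiliary subspaces you use (opens of $X$, opens of $K_\alpha$) are locally compact Hausdorff, hence honestly objects of $\cg$, so no Kelley-fication issues arise. Your approach buys a concrete and reusable description, $Sh_{\cgt}\left(F\right)\left(X\right)\cong\varprojlim_{K\hookrightarrow X}F\left(K\right)$ for arbitrary compactly generated Hausdorff $X$, at the cost of being longer; the paper's cofinality argument is shorter and makes transparent that $\cgt$ and the open cover topology simply agree on $\lk$.
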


\begin{proof}
This follows immediately from Proposition \ref{p:lcc}
\end{proof}

\subsection{Stacks for the Compactly Generated Grothendieck Topology}
Denote by $\hat\yk$ the $2$-functor
$$\hat\yk:\cg \to \Psk$$
induced by the inclusion $$\left(\mspace{2mu} \cdot \mspace{2mu}\right)^{(id)}:\Skp \hookrightarrow \Psk.$$
Then, it produces a $2$-adjoint pair, which we will denote by $\hat\ju \rightt \hat\jd$, by constructing $\hat\ju$ as the weak left Kan extension of $\hat\yk$, and by letting
$$\hat\jd\Y\left(X\right):=\Hom_{\Psk}\left(\hat\yk\left(X\right),\Y\right).$$
By setting
$$\hat\ju\left(\X\right)(T):=\X(T),$$
we get a $2$-functor which is weak colimit preserving and whose restriction to representables is the same as $\hat\yk$, hence, by uniqueness, the above equation for $\hat\ju$ must be correct. Note that the co-unit is an equivalence, hence $\hat\jd$ is fully faithful.
Similarly, denote again by $\hat\ycg$ the $2$-functor
$$\hat\ycg:\kp \to \Psg$$
induced by the inclusion $$\left(\mspace{2mu} \cdot \mspace{2mu}\right)^{(id)}:\Sg \hookrightarrow \Psg.$$
The weak left Kan extension of the $\hat\ycg$ along the Yoneda embedding, just as before, is left $2$-adjoint to $\ju$ and $2$-categorically full and faithful.

Note that for $F \in \Sg$ we have $$\hat \ju \left(F\right) \simeq \left(\ju\left(F\right)\right)^{id},$$ and similarly for $G \in \Skp,$ $$\hat \jd \left(G\right) \simeq \left(\jd\left(G\right)\right)^{id}.$$ Hence, to ease notation, we shall from now on denote $\hat\ju$ simply by $\ju$ and similarly for $\hat\jd.$

Let $\Stk$ denote the bicategory of stacks on $\kp$ with respect to the Grothendieck topology $\kpt$. Then we have a $2$-adjoint pair $a \rightt i$
$$\xymatrix{\Stk \ar@<-0.5ex>[r]_{i} & \Psk \ar@<-0.5ex>[l]_{a}},$$
where $a$ is the stackification $2$-functor (Definition \ref{dfn:stackification}) and $i$ is the inclusion. Then, by composition, we get a $2$-adjoint pair $a \circ \ju \rightt \jd \circ i$
$$\xymatrix@C=1.5cm{\Stk \ar@<-0.5ex>[r]_{\jd \circ i} & \Psg \ar@<-0.5ex>[l]_{a \circ \ju}}.$$

\begin{dfn}
A stack with respect to the Grothendieck topology $\cgt$ on $\cgh$ will be called a $\cgt$-stack.
\end{dfn}

Let $\Stcg$ denote the full sub-bicategory of $\Psg$ consisting of $\cgt$-stacks, and let $a_{\cgt}$ denote the associated stackification $2$-functor, and $\ell$ the inclusion, so $a_{\cgt} \rightt \ell$.

Just as before, since every open covering is a $\cgt$-cover,

$$\Stcg \subset \Stc.$$
The following results and their proofs follow naturally from those of the previous section when combined with the Comparison Lemma for stacks, a straightforward stacky analogue of the theorem in \cite{SGA} III:

\begin{cor}\label{cor:eql2tps}
There is an equivalence of bicategories
$$\xymatrix@C=1.9cm{\Stk \ar@<-0.5ex>[r]_(0.45){a_{\cgt} \circ \jd \circ i} & \Stcg \ar@<-0.5ex>[l]_(0.558){a \circ \ju \circ \ell}},$$
such that
$$\xymatrix@C=1.9cm{\Stk \ar@<-0.5ex>[r]_(0.45){a_{\cgt}\circ\jd \circ i} & \Stcg \ar@<-0.5ex>[l]_(0.558){a \circ \ju \circ \ell} \ar@<-0.5ex>[r]_(0.55){\ell} & \Psg \ar@<-0.5ex>[l]_(0.45){a_{\cgt}}}$$
is a factorization of
$$\xymatrix@C=1.5cm{\Stk \ar@<-0.5ex>[r]_{\jd \circ i} & \Psg \ar@<-0.5ex>[l]_{a \circ \ju}}$$
\label{thm:eqst}
(up to natural equivalence).
\end{cor}

\begin{lem}
If $\X$ is a $\cgt$-stack, then $\ju \left(\X\right)$ is a $\kpt$-stack.
\end{lem}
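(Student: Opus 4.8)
The plan is to transcribe, one categorical level higher, the argument given above for sheaves. First I would record the 2-categorical counterpart of the remark following Theorem~\ref{thm:eqlshvs}: by Corollary~\ref{thm:eqst} the essential image of $\ell$ coincides with that of $\jd \circ i$, so $\X$ is a $\cgt$-stack precisely when the unit of the 2-adjunction $St \circ \ju \rightt \jd \circ i$ is an equivalence at $\X$. Concretely, setting $\mathcal{Y} := St \circ \ju \circ \ell\left(\X\right)$ — which is by construction a $\kpt$-stack — the factorization in Corollary~\ref{thm:eqst}, together with the fact that $St_{\cgt} \circ \jd \circ i$ and $St \circ \ju \circ \ell$ are mutually inverse equivalences, gives an equivalence
$$\X \;\simeq\; \jd\bigl(i\left(\mathcal{Y}\right)\bigr).$$

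Next I would apply $\ju$ to this equivalence, obtaining $\ju\left(\X\right) \simeq \ju\,\jd\left(\mathcal{Y}\right)$. Since the co-unit of the 2-adjoint pair $\ju \rightt \jd$ is an equivalence (this is exactly what was observed to make $\jd$ fully faithful when the pair was constructed), we have $\ju\,\jd\left(\mathcal{Y}\right) \simeq \mathcal{Y}$. Hence $\ju\left(\X\right) \simeq \mathcal{Y}$, which is a $\kpt$-stack, and we are done. Equivalently and more explicitly, for $K \in \kp$ one computes $\ju\left(\X\right)\left(K\right) = \X\left(K\right) \simeq \Hom_{\Psk}\left(\yk\left(K\right),\mathcal{Y}\right) \simeq \mathcal{Y}\left(K\right)$, using the description of $\jd$ and the 2-Yoneda lemma, naturally in $K$; this identifies $\ju\left(\X\right)$ with the $\kpt$-stack $\mathcal{Y}$ directly.

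The argument is short, and I do not anticipate a genuine obstacle; the only thing that needs care — and the sole respect in which this is more than a verbatim copy of the sheaf case — is the bookkeeping of 2-cells: one should check that ``the co-unit of $\ju \rightt \jd$ is an equivalence'' is pseudo-natural, so that it may be invoked at the particular object $\mathcal{Y}$, and that the equivalence $\X \simeq \jd\bigl(i(\mathcal{Y})\bigr)$ coming from the stacky Comparison Lemma is compatible, up to coherent 2-cells, with the inclusion $\ell$. Both are immediate from the constructions of this section (the weak left Kan extensions defining $\ju$, $\jd$, $\js$, and the Comparison Lemma for stacks), so the substance of the proof is precisely the two displayed steps above.
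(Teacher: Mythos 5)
Your proposal is correct and is precisely the argument the paper intends: the stack-level lemma is stated as following "naturally" from the sheaf case, and your proof is exactly that transcription — use that $\X$ lies in the essential image of $\jd \circ i$, apply $\ju$, and cancel via the co-unit equivalence of $\ju \rightt \jd$ to get $\ju\left(\X\right) \simeq St\left(\ju \X\right)$. The only implicit point, that a prestack equivalent to a $\kpt$-stack is itself a $\kpt$-stack, is immediate from the definition, so nothing is missing.
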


\begin{cor}
If $\X$ is a weak presheaf in $\Psg$,

\begin{equation*}
a_{\cgt}\left(\X\right)\simeq \jd a \left(\ju \X\right).
\end{equation*}
If $\X \in St\left(\cg\right)$ is a stack in the open cover topology then its $\cgt$-stackification is given by $\jd \ju \X$.
\end{cor}

Again, let $\varsigma$ denote the unit of the $2$-adjunction $\ju \rightt \jd$.

\begin{prop}
Let $\X \in St\left(\cg\right)$ be a stack in the open cover topology and $X$ in $\lk$ a locally compact Hausdorff space. Then the map
$$\varsigma_{\X}\left(X\right):\X\left(X\right) \to \jd \ju \X\left(X\right) \simeq a_{\cgt}\left(\X\right)\left(X\right)$$
is an equivalence of groupoids. In particular, $\X$ and $a_{\cgt}\left(\X\right)$ agree on locally compact Hausdorff spaces.
\label{prop:eqparat}
\end{prop}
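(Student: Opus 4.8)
The plan is to mirror, one categorical level up, the proof of the sheaf-theoretic statement in Proposition~\ref{prop:loccov}. The first step is purely formal: by the preceding Corollary, since $\X$ is a stack for the open cover topology we have $St_{\cgt}(\X)\simeq\jd\ju\X$, and $\varsigma_{\X}$ is the unit of $\ju\rightt\jd$; unwinding the $2$-categorical definition of $\jd$, the map to be analyzed is
$$\varsigma_{\X}(X)\colon \X(X)\longrightarrow\jd\ju\X(X)=\Hom_{\Psk}\bigl(\yk(X),\ju\X\bigr).$$
So the task reduces to showing this particular map is an equivalence of groupoids for $X\in\lk$.

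Next I would record the identity $\yk(X)\simeq\varinjlim_{K\hookrightarrow X}\yk(K)$ in $\Psk$, the colimit ranging over the \emph{directed} poset of compact (Hausdorff) subsets $K$ of $X$. This holds for every compactly generated Hausdorff $X$: colimits of groupoid-valued presheaves are computed pointwise, a directed colimit of discrete groupoids is discrete, and for $T$ compact Hausdorff the map $\varinjlim_{K}\Hom_{\cg}(T,K)\to\Hom_{\cg}(T,X)$ is a bijection because any $g\colon T\to X$ factors through its image $g(T)$, a compact Hausdorff subspace of $X$. Applying $\Hom_{\Psk}(-,\ju\X)$ (which sends weak colimits to weak limits) and the $2$-categorical Yoneda lemma then gives $\jd\ju\X(X)\simeq\holim_{K\hookrightarrow X}\ju\X(K)=\holim_{K\hookrightarrow X}\X(K)$, and under this identification $\varsigma_{\X}(X)$ is the canonical comparison $\X(X)\to\holim_{K\hookrightarrow X}\X(K)$ given by restriction to compact subsets. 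Thus it remains to prove that this comparison is an equivalence when $X$ is locally compact Hausdorff. (It is not an equivalence in general — this is precisely the failure of the Yoneda embedding to preserve the colimit $X=\varinjlim_{K}K$ discussed in Section~\ref{sec:cgtop} — so local compactness must now be used.)

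For the final and essential step I would use local compactness to choose an open cover $\mathcal U=(U_i)_{i\in I}$ of $X$ with each closure $\overline{U_i}$ compact. For essential surjectivity, given a compatible family $(\sigma_K)$ over the compact subsets, set $\sigma_i:=\sigma_{\overline{U_i}}|_{U_i}\in\X(U_i)$; since $U_i\cap U_j\subseteq\overline{U_i}\cap\overline{U_j}$ and the latter is compact, the structure isomorphisms of the family (restricted through $\overline{U_i}\cap\overline{U_j}$) furnish canonical identifications $\sigma_i|_{U_i\cap U_j}\cong\sigma_j|_{U_i\cap U_j}$, and the coherence of the family — applied over $\overline{U_i}\cap\overline{U_j}\cap\overline{U_k}$ — yields the triple-overlap cocycle condition, so open-cover descent for $\X$ glues these into a section $\sigma\in\X(X)$; checking $\sigma|_K\cong\sigma_K$ for each compact $K$ is done by restricting to the finite subcover of $K$ by the $U_i$ and invoking descent on $K$. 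Full faithfulness is the analogous, easier computation: a compatible family of morphisms $\sigma|_K\to\tau|_K$ restricts to morphisms $\sigma|_{U_i}\to\tau|_{U_i}$ agreeing on overlaps, hence glues, and separatedness of $\X$ gives uniqueness. The ``in particular'' clause is then immediate. The main obstacle is exactly this last step — not the idea but the organization of the gluing isomorphisms and the verification of the cocycle condition, since each overlap $U_i\cap U_j$ must be reached through the compact set $\overline{U_i}\cap\overline{U_j}$, which lies inside two different members of the family and forces repeated use of the coherence constraints. A more formal alternative is to deduce everything from Proposition~\ref{p:lcc} via the stacky Comparison Lemma, observing that the $\cgt$-stackification process restricted to $\lk$ only ever involves open subspaces of locally compact Hausdorff spaces (again locally compact Hausdorff) and only covers that are cofinally refined by open ones, on which $\X$ is already a stack; but making the plus-construction iteration rigorous is no shorter than the direct argument above.
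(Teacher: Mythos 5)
Your argument is correct, but it takes a genuinely different route from the paper's. The paper disposes of this proposition formally: it is stated as one of the results that ``follow naturally'' from the sheaf-level Section \ref{sec:Groth} together with the stacky Comparison Lemma, the essential input being Proposition \ref{p:lcc} (every $\cgt$-cover of a locally compact Hausdorff space is refined by an open cover), exactly the alternative you mention at the end and set aside; since $\X$ already satisfies open-cover descent, the $\cgt$-stackification then cannot change its value on objects of $\lk$. You instead make the unit explicit: you identify $\yk(X)$ with the filtered weak colimit of $\yk(K)$ over compact subsets, so that $\jd\ju\X(X)\simeq \hl_{K\hookrightarrow X}\X(K)$, and then prove by hand that restriction $\X(X)\to \hl_K \X(K)$ is an equivalence using an open cover by relatively compact open sets and open-cover descent. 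This is really the same geometric use of local compactness as Proposition \ref{p:lcc} (your cover $\left(U_i\subseteq \overline{U_i}\right)$ is an open refinement of the canonical $\cgt$-cover by compact subsets), but organized without the plus-construction or Comparison Lemma; what it buys is a self-contained proof and the explicitly useful formula $St_{\cgt}(\X)(X)\simeq\hl_K\X(K)$, which resonates with the motivation of Section \ref{subsec:cgh} and with Lemma \ref{lem:spst}, at the cost of the 2-categorical coherence bookkeeping you acknowledge. Two small points to tidy if you write it out: the filtered pseudo-colimit identification should be stated as you do (pointwise, filtered colimits of discrete groupoids stay discrete), and in the essential-surjectivity step you should also record that the glued isomorphisms $\sigma|_K\cong\sigma_K$ are compatible with the transition isomorphisms of the family for $K\subseteq L$ (a routine uniqueness-of-glued-morphisms argument over the cover $\left(K\cap U_i\right)$), since that compatibility is part of being isomorphic in the weak limit groupoid.
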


\section{Compactly Generated Stacks}\label{sec:cgst}
\subsection{Compactly Generated Stacks}\label{sec:tcgs}
\begin{dfn}\index{compactly generated stack}
A \textbf{compactly generated stack} is a presentable $\cgt$-stack (see Definition \ref{dfn:presentable}).
\end{dfn}

We denote the full sub-bicategory of $\Stcg$ of compactly generated stacks by $\Ctst$.

Intrinsically, a compactly generated stack is a $\cgt$-stack $\X$ such that there exists a compactly generated Hausdorff space $X$ and a representable $\cgt$-epimorphism
$$p:X \to \X.$$
The map above is a $\cgt$-\textbf{atlas} for $\X$.

Let $$\tilde y_{\kp}:\cg Gpd \to \Psk$$ denote the $2$-functor
$$\G \mapsto \Hom_{\cg Gpd} \left(\left(\mspace{2mu} \cdot \mspace{2mu}\right)^{id},\G\right).$$
Given a topological groupoid $\G$ in $\cg Gpd$, denote by $\left[\G\right]_{\kpt}$ the associated $\kpt$-stack $a \circ \tilde y_{\kp}\left(\G\right)$.

Let $\Ctst'$ denote the essential image in $\Psk$ of $a \circ \tilde y_{\kp}$, i.e., it is the full sub-bicategory of consisting of $\kpt$-stacks equivalent to $\left[\G\right]_{\kpt}$ for some compactly generated topological groupoid $\G$. It is immediate from Theorem \ref{thm:eqst} that this bicategory is equivalent to $\Ctst$. In fact, the functor $\jd$ restricts to an equivalence $$\jd|_{\Ctst'}:\Ctst' \to \Ctst$$ of bicategories. Hence we have proven:

\begin{thm}\label{thm:req}
The bicategory of compactly generated stacks, $\Ctst$, is equivalent to the essential image of
$$\ju|_{\Tst}: \Tst \to St\left(\ch\right).$$
\end{thm}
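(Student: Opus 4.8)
The plan is to show that the composite $2$-functor $\ju \circ \left[-\right]: \cg Gpd \to St(\ch)$ has essential image equal to $\ju\left(\Tst\right)$, and that this essential image is equivalent to $\Ctst$ via the equivalence already established in Theorem \ref{thm:eqst} (and its restriction noted just before the statement). More precisely, I would first observe that $\ju\left(\Tst\right)$ is by definition the full sub-$2$-category of $St(\ch)$ on those $\kpt$-stacks equivalent to $\ju\left[\G\right]$ for some topological groupoid $\G$ over $\cg$. Since $\ju\left[\G\right] = \ju \, St \, \tilde y_\cgt(\G)$ and $\ju$ is (weakly) colimit-preserving and restricts correctly on representables, one identifies $\ju\left[\G\right]$ with $St \circ \ju \circ \tilde y_\cgt(\G)$; comparing with the definition $\left[\G\right]_{\kpt} = St \circ \tilde y_{\kp}(\G)$ and using that $\ju$ sends the $\cg$-representable $\tilde y_\cgt$ of an object to the $\kp$-representable $\tilde y_{\kp}$ (this is the content of the displayed compatibility identities $\left(\cdot\right)^{(id)} \circ \ju = \ju \circ \left(\cdot\right)^{(id)}$ together with the fact that $\ju$ restricted to spaces is just restriction of presheaves), one gets $\ju\left[\G\right] \simeq \left[\G\right]_{\kpt}$. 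Hence $\ju\left(\Tst\right)$, restricted to stacks with a groupoid presentation, is exactly $\Ctst'$.

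The key point to make rigorous is that every object of $\ju\left(\Tst\right)$ does have such a presentation — i.e. that a topological stack restricted to $\ch$ is always of the form $\left[\G\right]_{\kpt}$. This is immediate: a topological stack $\X$ over $\cg$ admits a groupoid presentation $\X \simeq \left[\G\right]$ by definition, and restriction is a $2$-functor, so $\ju(\X) \simeq \ju\left[\G\right] \simeq \left[\G\right]_{\kpt} \in \Ctst'$. Conversely every object of $\Ctst'$ is $\left[\G\right]_{\kpt} \simeq \ju\left[\G\right]$ for some $\G$, hence lies in $\ju\left(\Tst\right)$. So $\ju\left(\Tst\right) = \Ctst'$ as full sub-$2$-categories of $\Psk$. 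Finally, invoke the already-established fact (stated in the paragraph preceding the theorem) that $\jd$ restricts to an equivalence $\jd: \Ctst' \xrightarrow{\sim} \Ctst$, which itself follows from Theorem \ref{thm:eqst} (the equivalence $\Stk \simeq \Stcg$ carries the groupoid-presented objects on one side to the compactly generated stacks on the other, because presentability is detected by the existence of a representable epimorphic atlas and $\jd$, being an equivalence onto its image preserving representables and covers, transports atlases to atlases). Composing, $\Ctst \simeq \Ctst' = \ju\left(\Tst\right)$, which is the assertion.

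The main obstacle I anticipate is the bookkeeping around \emph{which} of the several adjoint functors named $\ju, \jd, \js$ is in play at each stage — the $1$-categorical versions on (pre)sheaves, the $2$-categorical versions on (pre)stacks, and their interaction with stackification $St$ and $St_\cgt$ — and verifying that the square identifying $St \circ \ju \circ \tilde y_\cgt$ with $St \circ \tilde y_{\kp}$ commutes up to coherent equivalence. This is where one must use that $\ju$ preserves weak colimits (so it commutes with the colimit defining $St$ of a groupoid nerve) and that $\ju$ sends $\cgt$-covers to $\kpt$-covers (so $\ju$ intertwines $St_\cgt$ and $St$, which is essentially the Lemma stating $\ju$ of a $\cgt$-stack is a $\kpt$-stack, plus the companion corollary $St_\cgt \X \simeq \jd St \ju \X$). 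None of this is deep, but it requires care to assemble without circularity; once assembled, the identification of essential images is formal.
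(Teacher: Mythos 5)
Your argument is essentially the paper's: the theorem is obtained there by introducing $\Ctst'$ as the essential image of $St \circ \tilde y_{\kp}$ (equivalently $\ju\left(\Tst\right)$, since $\ju\left[\G\right] \simeq \left[\G\right]_{\kpt}$) and observing that Theorem \ref{thm:eqst} restricts to an equivalence $\jd:\Ctst' \to \Ctst$, which is exactly your route, including the same supporting facts (compatibility of $\ju$ with representables and stackification, and $\left[\G\right]_{\cgt}\simeq\jd\left[\G\right]_{\kpt}$). Your write-up is correct and, if anything, spells out more of the bookkeeping than the paper does.
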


Note that from Theorems \ref{thm:bunmor} and \ref{thm:bicatp1}, $\Ctst$ is also equivalent to the bicategory of fractions $\cg Gpd \left[W_{\cgt}^{-1}\right]$ of compactly generated Hausdorff topological groupoids with inverted $\cgt$-Morita equivalences, and also to the bicategory $Bun^{\cgt} \cg Gpd$ of compactly generated Hausdorff topological groupoids with left $\cgt$-principal bundles as morphisms:

\begin{thm}
The $2$-functor
$$a_{\cgt} \circ \tilde y:\cg Gpd \to \Ctst$$
induces an equivalence of bicategories
$$P_{\cgt}:\cg Gpd \left[W_{\cgt}^{-1}\right] \stackrel{\sim}{\longrightarrow} \Ctst.$$
\label{thm:bicatcg1}
\end{thm}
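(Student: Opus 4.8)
The plan is to recognize $P_{\cgt}$ as the localization $2$-functor in Pronk's sense. Recall that for a $2$-functor $F\colon\cg Gpd\to\Ctst$ to induce a biequivalence $\cg Gpd\left[W_{\cgt}^{-1}\right]\xrightarrow{\sim}\Ctst$, it suffices that: $F$ carries every $\cgt$-Morita equivalence to an equivalence; $F$ is essentially surjective on objects; every $1$-morphism of $\Ctst$ between objects in the image of $F$ is isomorphic to one of the form $Ff\circ\left(Fw\right)^{-1}$ for a $\cgt$-Morita equivalence $w$ and a homomorphism $f$; and $F$ is locally full and faithful on $2$-morphisms after refining along such $w$'s. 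Taking $F:=St_{\cgt}\circ\tilde y$, the first point holds by the defining property of $W_{\cgt}$ --- a $\cgt$-Morita equivalence becomes invertible upon $\cgt$-stackifying its associated prestack --- so $F$ factors through the localization and this factorization is $P_{\cgt}$; here one also uses that $W_{\cgt}$ admits a calculus of fractions, which is part of the bibundle package of Theorems~\ref{thm:bunmor} and~\ref{thm:bicatp1}.

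Essential surjectivity on objects is immediate from the intrinsic description of compactly generated stacks: any $\X\in\Ctst$ admits a $\cgt$-atlas $p\colon X\to\X$ with $X\in\cg$, and since $\X$ has representable diagonal the weak fibre product $X\times_{\X}X$ again lies in $\cg$, so $\G_{\X}:=\left(X\times_{\X}X\rightrightarrows X\right)$ is an object of $\cg Gpd$; as $p$ is a $\cgt$-epimorphism, $\X$ is the $\cgt$-stack quotient of $\G_{\X}$, i.e.\ $\X\simeq St_{\cgt}\tilde y\left(\G_{\X}\right)$. For the morphism conditions I would pull back atlases. Given $\G,\h\in\cg Gpd$ and a morphism $\varphi\colon St_{\cgt}\tilde y\left(\G\right)\to St_{\cgt}\tilde y\left(\h\right)$, form $P:=\G_0\times_{St_{\cgt}\tilde y\left(\h\right)}\h_0$ along $\G_0\to St_{\cgt}\tilde y\left(\G\right)\xrightarrow{\varphi}St_{\cgt}\tilde y\left(\h\right)$; representability of the atlas $\h_0\to St_{\cgt}\tilde y\left(\h\right)$ and stability of $\cgt$-covers under pullback make $w_0\colon P\to\G_0$ a representable $\cgt$-epimorphism with $P\in\cg$. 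The pullback groupoid $\K:=\left(P\times_{\G_0}\G_1\times_{\G_0}P\rightrightarrows P\right)$ then carries a strict homomorphism $w\colon\K\to\G$ which is fully faithful by construction and $\cgt$-essentially surjective because $w_0$ is a $\cgt$-cover, hence a $\cgt$-Morita equivalence; and the groupoid relations of $\G$, transported through $\varphi$, equip $P\to\h_0$ --- possibly after one further $\cgt$-Morita refinement $\K'\to\K$ --- with the structure of a strict homomorphism $f\colon\K'\to\h$ presenting $\varphi$ as the span $\G\xleftarrow{\sim}\K'\xrightarrow{f}\h$. The $2$-morphism conditions are treated by the same pullback together with the faithfulness of passing to stacks on the relevant hom-groupoids, which follows from the equivalence $\Stk\simeq\Stcg$ of Corollary~\ref{thm:eqst} (reducing everything to the more classical $\kpt$-situation), so that a $2$-cell of $\cgt$-stacks lifts to a $2$-cell of homomorphisms over a common $\cgt$-Morita refinement, uniquely up to such refinement.

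I expect the genuine obstacle to be the last clause of the morphism step: turning the descent datum of a $\cgt$-stack morphism into an honest (generalized) homomorphism of groupoids, i.e.\ the $\cgt$-analogue of the Hilsum--Skandalis dictionary identifying morphisms of stacks with principal bibundles. All of the delicate point-set bookkeeping (representability and Kelleyfication of the pullbacks, stability of $\cgt$-covers, the possible need to refine $\K$ once more so that $f$ becomes strict) is concentrated there. Since precisely this content is recorded in Theorems~\ref{thm:bunmor} and~\ref{thm:bicatp1}, which together exhibit $\Ctst\simeq Bun^{\cgt}\cg Gpd$ and $Bun^{\cgt}\cg Gpd\simeq\cg Gpd\left[W_{\cgt}^{-1}\right]$, the efficient proof is to compose those two equivalences and then verify --- by unwinding definitions on objects and on atlas-presented $1$-morphisms --- that the composite is naturally isomorphic to $P_{\cgt}$, rather than re-deriving Pronk's conditions by hand.
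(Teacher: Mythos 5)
Your proposal is correct and matches the paper's route: the paper offers no separate argument for this theorem, deriving it directly from the general statements for an arbitrary subcanonical topology $J$ (Theorems \ref{thm:bunmor} and \ref{thm:bicatp1}, themselves resting on Pronk's calculus of fractions) specialized to $J=\cgt$ on $\cg$, which is precisely the ``efficient proof'' you settle on in your last paragraph. Your hand-sketch of Pronk's conditions (atlas for essential surjectivity, pulled-back atlases for $1$- and $2$-cells) is a reasonable unwinding of what those cited theorems encapsulate, with the only quibble being that Theorem \ref{thm:bicatp1} alone already gives the statement, composing with Theorem \ref{thm:bunmor} being what yields the bibundle version (Theorem \ref{thm:bicatcg2}).
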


\begin{thm}
The $2$-functor
$$a_{\cgt} \circ \tilde y:\cg Gpd \to \Ctst$$
induces an equivalence of bicategories
$$P_{\cgt}':Bun^{\cgt} \cg Gpd \stackrel{\sim}{\longrightarrow} \Ctst.$$
\label{thm:bicatcg2}
\end{thm}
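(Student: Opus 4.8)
The plan is to describe the pseudofunctor $P_{\cgt}'$ concretely and then reduce the equivalence assertion to Theorem~\ref{thm:bicatcg1} via a comparison between $\cgt$-principal bundles and the bicategorical calculus of fractions. On objects $P_{\cgt}'$ is $\G \mapsto [\G]_{\cgt} := St_{\cgt} \circ \tilde y\left(\G\right)$, the value of the given $2$-functor. On $1$-morphisms one uses that a left $\cgt$-principal $\G$-bundle $P$ over $\h$ --- a bibundle carrying commuting left $\h$- and right $\G$-actions whose anchor $P \to \h_0$ restricts over every compact subset of $\h_0$ to an ordinary principal $\G$-bundle --- determines a morphism of $\cgt$-stacks $[\h]_{\cgt} \to [\G]_{\cgt}$, and that this assignment identifies the groupoid of such bibundles with $\Hom_{\Ctst}\left([\h]_{\cgt},[\G]_{\cgt}\right)$; this is the $\cgt$-analogue of Theorem~\ref{thm:bunmor}. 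On $2$-morphisms, isomorphisms of bibundles are sent to the corresponding $2$-cells of stacks. The coherence data of $P_{\cgt}'$ --- compatibility of the tensor-product composition of bibundles with composition of stack morphisms, and of the trivial bibundle $\G$ with identities --- is transported from the corresponding structure in $\Ctst$.

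For essential surjectivity, let $\X$ be a compactly generated stack and choose a representable $\cgt$-atlas $p\colon X \to \X$ with $X \in \cg$. Since $p$ is representable, the weak fibred product $X \times_{\X} X$ is a space, and as $\cg$ is closed under the limits involved it lies in $\cg$, so $X \times_{\X} X \rightrightarrows X$ is an object of $\cg Gpd$. Because $p$ is a $\cgt$-epimorphism, $\cgt$-descent along $p$ gives $\left[X \times_{\X} X \rightrightarrows X\right]_{\cgt} \simeq \X$, so $\X$ lies in the essential image of $P_{\cgt}'$.

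For the equivalence on hom-groupoids, the cleanest route is to exhibit a comparison $Bun^{\cgt}\cg Gpd \simeq \cg Gpd\left[W_{\cgt}^{-1}\right]$ over $\cg Gpd$ and then invoke Theorem~\ref{thm:bicatcg1}. In one direction, a generalized morphism $\h \xleftarrow{w} \K \xrightarrow{f} \G$ with $w$ a $\cgt$-Morita equivalence is sent to the bibundle obtained by the standard division construction, a suitable quotient in $\cg$ of a fibred product of $\K_1$, $\h_1$ and $\G_1$; in the other direction, a $\cgt$-principal bundle $P$ is sent to the groupoid with object space $P$ and the evident structure maps. One checks these assignments are pseudo-inverse and commute with the projections to $\cg Gpd$. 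This is the verbatim analogue of the proof of Theorem~\ref{thm:bicatp1} in the classical setting, the only new points being that every space produced --- the fibred products, and the quotients by principal actions --- remains in $\cg$, and that ``$\cgt$-principal'' is precisely the local-triviality condition matched to $\cgt$-Morita equivalences. Composing with Theorem~\ref{thm:bicatcg1} and checking that the resulting equivalence is implemented by $St_{\cgt} \circ \tilde y$ produces $P_{\cgt}'$.

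The main obstacle is the one genuinely topological point hidden in the previous paragraph: that the composite of two $\cgt$-principal bibundles is again a $\cgt$-principal bibundle inside $\cg$. Given composable $\cgt$-bibundles $Q\colon \h \to \G$ and $P\colon \K \to \h$, the composite $P \otimes_{\h_0} Q$ is the quotient of the compactly generated Hausdorff space $P \times_{\h_0} Q$ by the diagonal $\h$-action; one must verify that, after applying the Kelley functor, this quotient is still Hausdorff --- which uses principality of the $\h$-action, so that the quotient map admits $\cgt$-local sections --- and that its restriction to every compact subset of $\K_0$ is an \emph{ordinary} principal $\G$-bundle. The latter is exactly where Corollary~\ref{cor:comcov} and Proposition~\ref{p:lcc} are used: restricting to a compact, hence locally compact, subset turns $\cgt$-local triviality into genuine local triviality, so the two now-ordinary bundles compose by the classical argument and the composite is again ordinary. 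The dual check --- that the bibundle produced from a generalized morphism is $\cgt$-principal rather than merely open-locally trivial --- is handled the same way, and the remaining verifications (associativity and unit coherence, functoriality on $2$-cells) are routine and identical to the classical case.
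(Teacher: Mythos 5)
Your proposal is correct and follows essentially the same route as the paper: the paper obtains Theorem \ref{thm:bicatcg2} simply by specializing the general results Theorem \ref{thm:bunmor} (the equivalence $\cg Gpd\left[W_{\cgt}^{-1}\right] \simeq Bun^{\cgt}\cg Gpd$ for a subcanonical topology) and Theorem \ref{thm:bicatp1} (i.e.\ Theorem \ref{thm:bicatcg1}) to $J=\cgt$, which is exactly your reduction through the bicategory of fractions. The extra verifications you spell out --- essential surjectivity via a $\cgt$-atlas, composition of $\cgt$-bibundles, Hausdorffness of the quotient (which, note, follows from openness of the quotient map and closedness of the orbit relation forced by principality, not merely from the existence of $\cgt$-local sections), and the use of Corollary \ref{cor:comcov} and Proposition \ref{p:lcc} to turn $\cgt$-local triviality into ordinary local triviality on compact subsets --- are details the paper delegates to those cited general theorems.
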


We note that the principal bundles in $Bun^{\cgt} \cg Gpd$ have a very simple description:

Recall that our notion of principal bundle depends on a Grothendieck topology. When the projection map of a principal bundle admits local sections (with respect to the open cover topology), it is called \textbf{ordinary}.

\begin{prop}
If $\G$ is a topological groupoid in $\cg Gpd$, $X$ is a compactly generated Hausdorff space, and
$$\xymatrix @R=2pc @C=0.15pc {\G_1  \ar@<+.7ex>[d] \ar@<-.7ex>[d] & \acts & P \ar_{\mu}[lld] \ar^{\pi}[d] \\
\G_0 && X}$$
is a left $\G$-space over $\pi$, then it is a $\cgt$-principal bundle if and only if the restriction of $P$ to $K$ is an ordinary principal $\G$-bundle over $K$, for every compact subset $K \subseteq X$.
\label{prop:cgpb}
\end{prop}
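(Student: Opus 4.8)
The plan is to reduce both implications to the definition of a $\cgt$-cover (Definition~\ref{dfn:cgcover}) together with the fact that the family of all compact subsets of $X$ is a $\cgt$-cover (Corollary~\ref{cor:comcov}). Recall that a left $\G$-space $P$ over $\pi : P \to X$ is a principal bundle (in the sense that does not involve local triviality) exactly when its shear map $\G_1 \times_{\G_0} P \to P \times_X P$ is a homeomorphism, and that it is a $\cgt$-principal bundle exactly when, in addition, $\pi$ is a $\cgt$-epimorphism. By the Lemma that $\mathcal{B}$ is a basis for $\cgt$, this last condition says precisely that there is a $\cgt$-cover $\left(\alpha_i : V_i \hookrightarrow X\right)_{i \in I}$ each of whose members lifts through $\pi$ along some section $\sigma_i : V_i \to P$. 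The statement ``$P|_K$ is an ordinary principal $\G$-bundle over $K$'' is the analogous condition with the open cover topology in place of $\cgt$; since $K$ is compact Hausdorff it amounts to the shear map over $K$ being a homeomorphism together with $\pi|_K$ admitting sections over the members of some open cover of $K$.

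For the forward direction I would fix a compact $K \subseteq X$ and choose a $\cgt$-cover $\left(V_i \hookrightarrow X\right)_{i\in I}$ with sections $\sigma_i$ as above. Base-changing the shear map of $P$ along $K \hookrightarrow X$ yields the shear map of $P|_K$, which is therefore a homeomorphism. Applying Definition~\ref{dfn:cgcover} to $K$ produces a finite $J(K) \subseteq I$ and an open cover $\left(U_m\right)_m$ of $K$ with $U_m \subseteq V_{j(m)} \cap K$ for each $m$; then $\sigma_{j(m)}|_{U_m}$ takes values in $\pi^{-1}(K) = P|_K$ and is a section of $\pi|_K$ over $U_m$. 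Hence $P|_K \to K$ is an ordinary principal $\G$-bundle.

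For the converse I would first treat the shear map $\G_1 \times_{\G_0} P \to P \times_X P$. It is a morphism over $X$ between representable presheaves, which are $\cgt$-sheaves since $\cgt$ is subcanonical (Corollary~\ref{cor:subcan}), and by hypothesis its base change along each inclusion $K \hookrightarrow X$ of a compact subset is a homeomorphism. Since these inclusions form a $\cgt$-cover of $X$ (Corollary~\ref{cor:comcov}), the sheaf condition on this cover forces the shear map of $P$ to be an isomorphism, so $P$ is a principal bundle. It remains to exhibit a $\cgt$-cover witnessing that $\pi$ is a $\cgt$-epimorphism: for each compact $K \subseteq X$ pick a finite open cover $\left(W^K_l\right)_{l=1}^{n_K}$ of $K$ with sections $s^K_l : W^K_l \to P|_K \hookrightarrow P$ of $\pi$, and take the family $\mathcal{W} := \left(W^K_l \hookrightarrow X\right)_{K,l}$. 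This is a $\cgt$-cover, since for an arbitrary compact $K' \subseteq X$ the finite subfamily indexed by $K'$ already has $W^{K'}_l \cap K' = W^{K'}_l$ and $\left(W^{K'}_l\right)_l$ is an open cover of $K'$. Each member of $\mathcal{W}$ lifts through $\pi$ by construction, so $\pi$ is a $\cgt$-epimorphism and $P$ is a $\cgt$-principal bundle.

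The only step that is not a mechanical unwinding of Definition~\ref{dfn:cgcover} is the point in the converse where global principality --- the shear map being a homeomorphism --- is deduced from principality over every compact subset, and this is where compact generation is genuinely used; I expect it to be the main obstacle. I would settle it by the sheaf-condition argument above, or, to avoid topos-theoretic language, by noting directly that $P$, $P \times_X P$ and $\G_1 \times_{\G_0} P$ are each the colimit in $\cgh$ of their restrictions over the compact subsets of $X$ --- every compact subset of any of these spaces lies in the part over a suitable compact $K \subseteq X$ --- so that the shear map, being compatible with these colimit presentations and a homeomorphism at each stage, is itself a homeomorphism.
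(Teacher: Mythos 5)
Your proof is correct, and the heart of it coincides with the paper's argument: for the direction from ``ordinary principal over every compact $K$'' to ``$\cgt$-principal,'' the paper does exactly what you do, namely choose for each compact $K_\alpha$ a finite open cover $\left(U^i_\alpha \hookrightarrow K_\alpha\right)$ admitting sections and observe that the total family $\left(U^i_\alpha \hookrightarrow X\right)$ is a $\cgt$-cover over which $\pi$ admits local sections; the opposite direction is dismissed in the paper as trivial, and your unwinding of Definition \ref{dfn:cgcover} to produce sections of $\pi|_K$ over an open cover of $K$ is the intended argument. Where you go beyond the paper is in isolating and proving the statement that the shear map $\G_1 \times_{\G_0} P \to P \times_X P$ is a homeomorphism globally once it is one over every compact subset of $X$: the paper's proof addresses only the local-sections condition and leaves this point tacit. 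Your two justifications are both sound --- conservativity of base change along the jointly epimorphic family of compact inclusions in the topos of $\cgt$-sheaves (using subcanonicity, Corollary \ref{cor:subcan}, and Corollary \ref{cor:comcov}), or, more elementarily, the observation that $P$, $P\times_X P$ and $\G_1\times_{\G_0}P$ carry the final topology with respect to their closed pieces over compact subsets of $X$, since any compact subset of these total spaces lies over a compact subset of $X$ --- and the second in particular is exactly where compact generation (and the Hausdorff hypothesis, making the $P|_K$ closed) enters. So your write-up is the paper's proof plus an explicit verification of a step the paper elides; no gaps.
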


\begin{proof}
Suppose that we are given a left $\G$-space $\pi:P \to X,$ whose restriction to $K$ is an ordinary principal $\G$-bundle over $K$, for every compact subset $K \subseteq X$. Then, for each compact subset $K_\alpha \subseteq X$, we can choose an open cover $\left(U_\alpha^{i} \hookrightarrow K_\alpha\right)_{i=1}^{N_\alpha}$ over which $P$ admits local sections. Then $P$ admits local sections with respect to the $\cgt$-cover
$$\mathcal{U}:=\left(U_\alpha^{i} \hookrightarrow X\right).$$
The converse is trivial.
\end{proof}

\begin{cor}\label{cor:tiredxx}
If $\G$ and $\h$ are topological groupoids in $\cg Gpd$ and $\h_0$ is locally compact Hausdorff, then

\begin{equation*}
Bun_{\G}^{\cgt}\left(\h\right) \simeq Bun_{\G}\left(\h\right),
\end{equation*}
where $Bun_{\G}^{\cgt}\left(\h\right)$ denotes the groupoid of $\cgt$-principal $\G$-bundles over $\h$, and $Bun_{\G}\left(\h\right)$ denotes the groupoid of ordinary principal $\G$-bundles over $\h$.

Equivalently:

If $\X$ and $\Y$ are topological stacks, and $\Y$ admits a locally compact Hausdorff atlas $Y \to \Y$, then the map
$$\Hom_{\St\left(\cg\right)}\left(\Y,\X\right) \to \Hom_{\St\left(\cg\right)}\left(a_{\cgt}\left(\Y\right),a_{\cgt}\left(\X\right)\right)$$
induced by the unit $\varsigma_{\X}:\X \to a_{\cgt}\left(\X\right)$, and the $2$-adjunction $a_{\cgt} \rightt \ell$ is an equivalence of groupoids.
\label{cor:locsame}
\end{cor}

\begin{cor}\label{cor:fflk}
The $\cgt$-stackification functor restricted to the sub-bicategory of topological stacks consisting of those topological stacks which admit a locally compact Hausdorff atlas, is $2$-categorically full and faithful.
\end{cor}

\begin{thm}\label{thm:subcat}
The bicategory of compactly generated stacks is equivalent to the sub-bicategory of topological stacks consisting of those topological stacks which admit a locally compact Hausdorff atlas.\footnote{When we work over compactly generated Hausdorff spaces}
\end{thm}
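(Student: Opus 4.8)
The plan is to establish the equivalence in two directions, using the machinery already assembled in the excerpt. Write $\Ctst$ for the $2$-category of compactly generated stacks and let $\Tst^{lc} \subseteq \Tst$ denote the full sub-$2$-category of topological stacks (over $\cg$) admitting a locally compact Hausdorff atlas. The first step is to produce a $2$-functor $\Tst^{lc} \to \Ctst$: given $\X \in \Tst^{lc}$, send it to its $\cgt$-stackification $St_{\cgt}\left(\X\right)$. I must check this lands in $\Ctst$, i.e. that $St_{\cgt}\left(\X\right)$ is presentable; but if $Y \to \X$ is a locally compact Hausdorff atlas for the open cover topology, then $\varsigma_{\X} \circ (Y \to \X) : Y \to St_{\cgt}\left(\X\right)$ is a representable $\cgt$-epimorphism (epimorphisms and representability are preserved by stackification into a finer topology, and open covers are $\cgt$-covers, so any open-cover atlas is \emph{a fortiori} a $\cgt$-atlas), hence $St_{\cgt}\left(\X\right)$ is a compactly generated stack.

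The second step is to show this assignment is fully faithful and essentially surjective. Essential surjectivity: every compactly generated stack $\X$ by definition admits a $\cgt$-atlas $p : X \to \X$ with $X \in \cg$; applying the equivalence $\jd : \Ctst' \to \Ctst$ of Theorem \ref{thm:req} together with Theorems \ref{thm:bicatcg1} and \ref{thm:bicatcg2}, $\X$ is presented by a compactly generated Hausdorff topological groupoid $\G$, and $Fib\left(\G\right)$ provides (via Lemma \ref{lem:fibm} and the atlas $\left\|\G\right\| \to \left[\G\right]$) a genuine topological stack $\left[\G\right]$ whose $\cgt$-stackification is $\X$. The point to nail down is that $\left[\G\right]$ admits a \emph{locally compact} atlas: here I would invoke Corollary \ref{cor:comcov} — the inclusion of all compact subsets of $X$ is a $\cgt$-cover — to replace $X$ by the disjoint union $\coprod_\alpha K_\alpha$ of its compact subsets, which is locally compact Hausdorff, and whose composite to $\left[\G\right]$ (a genuine open-cover atlas now, by Proposition \ref{prop:cgpb} unpacked) exhibits $\left[\G\right] \in \Tst^{lc}$ with $St_{\cgt}\left(\left[\G\right]\right) \cong \X$.

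For full faithfulness, fix $\X, \Y \in \Tst^{lc}$; I want $\Hom_{\St\left(\cg\right)}\left(\Y,\X\right) \xrightarrow{\sim} \Hom_{\Stcg}\left(St_{\cgt}\left(\Y\right), St_{\cgt}\left(\X\right)\right)$. This is precisely the content of the second formulation of Corollary \ref{cor:locsame}: using the $2$-adjunction $St_{\cgt} \rightt \ell$ the right-hand side is $\Hom_{\St\left(\cg\right)}\left(\Y, \ell\, St_{\cgt}\left(\X\right)\right) = \Hom_{\St\left(\cg\right)}\left(\Y, St_{\cgt}\left(\X\right)\right)$, and since $\Y$ has a locally compact atlas, Corollary \ref{cor:locsame} says the map induced by $\varsigma_{\X}$ is an equivalence of groupoids. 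So full faithfulness is essentially free once Corollary \ref{cor:locsame} is in hand, and the two steps together give the asserted equivalence of $2$-categories. One also records that $St_{\cgt}$ and its quasi-inverse $\ju$ (restriction to $\ch$, which by Theorem \ref{thm:req} sees $\Ctst$ as the essential image of $\ju : \Tst \to St\left(\ch\right)$) are mutually inverse on these sub-$2$-categories.

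The main obstacle I anticipate is essential surjectivity, specifically the passage from ``$\X$ has a $\cgt$-atlas by some $X \in \cg$'' to ``the open-cover topological stack presenting $\X$ has a genuine \emph{locally compact} atlas.'' The subtlety is that a $\cgt$-atlas $X \to \X$ need not be an open-cover atlas of any topological stack, so one cannot just take $\X = St_{\cgt}\left(\X_0\right)$ naively; one must first pass through the groupoid picture (Theorems \ref{thm:bicatcg1}, \ref{thm:bicatcg2}, Proposition \ref{prop:cgpb}) to replace $X$ by $\coprod_\alpha K_\alpha$ so that the restriction to each $K_\alpha$ is an \emph{ordinary} principal bundle, and only then does one obtain a topological groupoid whose open-cover stack is a legitimate preimage. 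Handling this carefully — and checking that $\coprod_\alpha K_\alpha \to \left[\G\right]$ really is an open-cover epimorphism with representable, locally-sectioned pullbacks — is where the real work lies; everything else reduces to the adjunction bookkeeping already set up in Section \ref{sec:cgtop} and Corollary \ref{cor:locsame}.
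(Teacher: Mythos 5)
Your full-faithfulness step is sound and is essentially the paper's: Corollary \ref{cor:locsame} (which the paper invokes in its bundle form $Bun_{\G}\left(\h\right) \cong Bun^{\cgt}_{\G}\left(\h\right)$ for $\h_0$ locally compact) does exactly this job. The genuine gap is in essential surjectivity, precisely at the point you flagged but did not resolve: the claim that $\coprod_\alpha K_\alpha \to \left[\G\right]$ is ``a genuine open-cover atlas'' and hence that $\left[\G\right]$ itself lies in the sub-2-category $\Tst^{lc}$ of topological stacks with a locally compact atlas. The family of compact subsets of $\G_0$ is only a $\cgt$-cover (Corollary \ref{cor:comcov}); the map $\coprod_\alpha K_\alpha \to \G_0$ admits local sections for the \emph{open-cover} topology only if every point of $\G_0$ has a compact neighborhood, so the composite to $\left[\G\right]$ is a $\cgt$-epimorphism but in general not an open-cover epimorphism. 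Worse, $\left[\G\right]$ may admit \emph{no} locally compact atlas at all: take $\G=\left(X\right)^{id}$ with $X$ compactly generated Hausdorff and nowhere locally compact (e.g.\ $X=\mathbb{Q}$), so $\left[\G\right]=X$. If $\pi\colon Y \to X$ were an ordinary atlas with $Y$ locally compact and $\sigma\colon U \to Y$ a local section over a nonempty open $U\subseteq X$, choose $V\ni\sigma\left(x\right)$ open in $Y$ with $\bar V$ compact; then $\pi\left(\bar V\right)$ is a compact subset of $X$ containing the open neighborhood $\sigma^{-1}\left(V\right)$ of $x$, contradicting nowhere local compactness. Yet $St_{\cgt}\left(X\right)$ is a compactly generated stack, so your proposed preimage $\left[\G\right]$ simply is not in the source 2-category.

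The repair --- and this is what the paper actually does --- is to change the \emph{groupoid}, not the atlas of the same stack. Let $\mathcal{U}$ be the $\cgt$-cover of $\G_0$ by all compact subsets and form the induced groupoid $\G_{\mathcal{U}}$ over $\coprod_\alpha K_\alpha$. Its object space is locally compact Hausdorff, so $\coprod_\alpha K_\alpha \to \left[\G_{\mathcal{U}}\right]$ is automatically an ordinary atlas and $\left[\G_{\mathcal{U}}\right]\in\Tst^{lc}$; moreover $\G_{\mathcal{U}} \to \G$ is a $\cgt$-Morita equivalence (generally \emph{not} an ordinary one), whence $St_{\cgt}\left(\left[\G_{\mathcal{U}}\right]\right)\cong\left[\G_{\mathcal{U}}\right]_{\cgt}\cong\left[\G\right]_{\cgt}\cong\X$. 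The correct preimage of $\X$ is thus $\left[\G_{\mathcal{U}}\right]$, which as a topological stack is in general inequivalent to $\left[\G\right]$ --- they agree only after $\cgt$-stackification, exactly as in the remark following Theorem \ref{thm:carclos}. With this substitution your argument goes through and is, up to running the quasi-inverse direction (the paper works with $\Upsilon\colon Bun \lk_0 Gpd \to \Ctst$, $\G\mapsto\left[\G\right]_{\cgt}$, after identifying $\Tst^{lc}$ with groupoids having locally compact object space and ordinary bundles), the same proof; the detour through $Fib\left(\G\right)$ and $\left\|\G\right\|$ in your surjectivity step is unnecessary.
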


\begin{proof}
Denote the sub-bicategory of topological stacks consisting of those topological stacks which admit a locally compact Hausdorff atlas by $\Tst_{\lk}.$ Note that the image of $$a_{\cgt}|_{\Tst_{\lk}}:\Tst_{\lk} \to \St_{\cgt}\left(\cgh\right)$$ lies entirely in $\Ctst.$ By Corollary \ref{cor:fflk} this $2$-functor is full and faithful. It suffices to show it is essentially surjective. Notice that the essential image is those compactly generated stacks which admit a locally compact Hausdorff atlas. To complete the proof, note that if $X \to \X$ is any atlas of a compactly generated stack, then the inclusion of all compact subsets of $X$ is a $\cgt$-cover, hence $$\underset{\alpha} \coprod K_\alpha \to X \to \X$$ is a $\cgt$-atlas for $\X$ which is locally compact Hausdorff.
\end{proof}

\begin{thm}
The bicategory of compactly generated stacks has arbitrary products.
\label{thm:allprod}
\end{thm}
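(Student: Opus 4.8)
The plan is to reduce the statement about arbitrary products in $\Ctst$ to a statement about arbitrary products in the topos-theoretic category of $\cgt$-stacks, using the fact that $\cgt$-stacks (as a sub-2-category of presheaves of groupoids) are closed under all small weak limits, so in particular under arbitrary products; the only thing to check is that a product of presentable $\cgt$-stacks is again presentable, i.e.\ admits a $\cgt$-atlas. First I would recall that $\Stcg$, being the 2-category of stacks for a Grothendieck topology, is closed under arbitrary small weak limits computed in $\Psg$ (a weak limit of stacks is a stack), so given a small family $\left(\X_\lambda\right)_{\lambda \in \Lambda}$ of compactly generated stacks, the weak product $\prod_\lambda \X_\lambda$ exists in $\Stcg$. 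It remains to produce a representable $\cgt$-epimorphism onto it from a compactly generated Hausdorff space.

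The key step is to build such an atlas out of atlases for the factors. For each $\lambda$ choose a $\cgt$-atlas $p_\lambda : X_\lambda \to \X_\lambda$ with $X_\lambda \in \cg$. The naive candidate is $\prod_\lambda X_\lambda \to \prod_\lambda \X_\lambda$ (product of spaces on the left computed in $\cg$, via the Kelley functor), which is representable since a product of representable maps is representable. The issue is whether this map is a $\cgt$-epimorphism: $\cgt$-epimorphism is a \emph{local} condition, and an arbitrary product of epimorphisms of sheaves need not be an epimorphism. Here is where the special nature of $\cgt$ enters, via Corollary~\ref{cor:comcov} and Proposition~\ref{p:lcc}: to check that $q : \prod_\lambda X_\lambda \to \prod_\lambda \X_\lambda$ is a $\cgt$-epimorphism it suffices — because the inclusion of all compact subsets of any space is a $\cgt$-cover, and on locally compact (in particular compact) Hausdorff spaces $\cgt$-covers are refined by open covers — to check that for any map $K \to \prod_\lambda \X_\lambda$ from a compact Hausdorff space $K$, the pullback $K \times_{\prod_\lambda \X_\lambda} \prod_\lambda X_\lambda \to K$ admits local sections over an open cover of $K$. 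A map out of $K$ into the weak product is a compatible family of maps $K \to \X_\lambda$, and pulling back $p_\lambda$ along each gives, since $K$ is compact, an \emph{ordinary} $\G_\lambda$-bundle trivialized over some \emph{finite} open cover of $K$; one then wants to find a single common refinement. A finite open cover works for each individual $\lambda$, but intersecting over infinitely many $\lambda$ is the danger point.

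The way around this, and the step I expect to be the main obstacle, is to \emph{not} use the product of the $X_\lambda$ directly but instead first replace each $\X_\lambda$ by a presentation with a locally compact atlas — which is always possible by Theorem~\ref{thm:subcat} (take $\G_\lambda$ presenting $\X_\lambda$, pass to $(\G_\lambda)_{\mathcal U}$ for $\mathcal U$ the $\cgt$-cover of $(\G_\lambda)_0$ by all its compact subsets, giving a $\cgt$-Morita equivalent groupoid with locally compact object space). Then $\prod_\lambda (\G_\lambda)_0$ is a product of locally compact Hausdorff spaces; although an infinite product of locally compact spaces is typically not locally compact, what we actually need is only that the canonical map from $\prod_\lambda (\G_\lambda)_0$ (as an object of $\cg$) to $\prod_\lambda \X_\lambda$ is a $\cgt$-epimorphism, and this we test against compact $K$ as above. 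Concretely: given $K \to \prod_\lambda \X_\lambda$, for each $\lambda$ the pullback bundle over $K$ is ordinary, so trivial over some finite open cover of $K$; choose for each $\lambda$ a single open $U_\lambda \subseteq K$ over which it has a section together with the data that $U_\lambda$ ranges over a finite cover — then use that $\prod_\lambda X_\lambda \to \prod_\lambda \X_\lambda$, restricted over $\prod_\lambda U_\lambda^{(i)}$, has sections, and that the collection $\bigl(\text{point-preimages of such products}\bigr)$ forms a $\cgt$-cover of $K$ because, on the compact space $K$, only finitely many constraints can fail to be refinable by an open cover at once — this is precisely the content of Definition~\ref{dfn:cgcover}, where for each compact $K$ one is allowed to pass to a finite subfamily. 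Once the atlas is in hand, the weak universal property of the product in $\Stcg$ transports verbatim to $\Ctst$ since $\Ctst$ is a full sub-2-category, and we conclude. I would also remark that this argument is the prototype for the general weak-limit statement (Corollary~\ref{cor:alllimits}), equalizers/comma-objects being handled by the analogous fibrant-replacement technology of \cite{Andre}.
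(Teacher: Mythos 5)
There is a genuine gap, and it sits exactly at the point you flagged as ``the main obstacle.'' Your candidate atlas $\prod_\lambda X_\lambda \to \prod_\lambda \X_\lambda$ requires, for every map $K \to \prod_\lambda \X_\lambda$ from a compact Hausdorff space, an open cover of $K$ over which \emph{all} of the infinitely many pullback bundles $P_\lambda \to K$ simultaneously admit sections. Each $P_\lambda$ individually trivializes over some finite open cover of $K$, but these covers depend on $\lambda$, and at a given point of $K$ the intersection of the chosen trivializing neighborhoods over infinitely many $\lambda$ need not contain any open set. Your proposed rescue --- ``on the compact space $K$, only finitely many constraints can fail to be refinable by an open cover at once, which is precisely the content of Definition~\ref{dfn:cgcover}'' --- is not what that definition says: the finite-subfamily clause there applies to a single family of subsets of a fixed space, tested against each compact subset, and it gives no way to reduce infinitely many independent descent conditions (one per factor $\lambda$) to finitely many. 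Passing first to locally compact atlases via Theorem~\ref{thm:subcat} does not change this, since the simultaneous-trivialization problem over $K$ is untouched. So the epimorphism claim for the product of atlases is unproved, and I do not see how to complete it along these lines; the ``point-preimages of such products'' at the end do not form a $\cgt$-cover of $K$ in any sense established by the paper.

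The paper's proof avoids this issue entirely by a different mechanism, which you allude to only in your closing remark: fibrant replacement. Using Lemma~\ref{lem:fibm} one may assume each $\G_i$ presenting $\X_i$ is fibrant, and then Theorem~\ref{thm:imp} says that for paracompact (in particular compact Hausdorff) test spaces, maps into $\left[\G_i\right]$ are literally groupoid homomorphisms --- no covers or descent data are needed, so $\X_i \cong \jd \tilde y_{\kp}\left(\G_i\right)$ with no stackification. Since $\tilde y_{\kp}$ and the right 2-adjoint $\jd$ both preserve weak limits, the pointwise product of the stacks is $\jd \tilde y_{\kp}\left(\prod_i \G_i\right)$, and applying $St_{\cgt}$ identifies it with $\left[\prod_i \G_i\right]_{\cgt}$, which is presentable by definition. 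In other words, the entire difficulty of producing simultaneous local sections for infinitely many bundles is absorbed once and for all into the universal-bundle construction behind fibrancy; if you want to salvage your argument, the correct move is to take $X_\lambda$ to be the object space of the fibrant replacement (equivalently, to argue via $\tilde y_{\kp}$ as the paper does) rather than an arbitrary atlas of $\X_\lambda$.
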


\begin{proof}
Let $\X_i$ be an arbitrary family of compactly generated stacks. Then we can choose topological groupoids $\G_i$ in $\cg Gpd$ such that

$$\X_i \simeq \left[\G_i\right]_{\cgt}.$$
Note that

\begin{equation*}
\left[\G_i\right]_{\cgt} \simeq \jd \left[\G_i\right]_{\kpt}.
\end{equation*}
In light of Lemma \ref{lem:fibm}, we may assume without loss of generality that each $\G_i$ is fibrant. Under this assumption, by Theorem \ref{thm:imp}, it follows that

\begin{equation*}
\left[\G_i\right]_{\cgt} \simeq \jd \tilde y_{\kp}\left(\G_i\right).
\end{equation*}
Note that the product $\prod\limits_i \X_i$ is a $\cgt$-stack, as any bicategory of stacks is complete. It suffices to show that this product is still presentable.

Recall that $\tilde y_{\kp}$ preserves small weak limits. Moreover, $\jd$ does as well as it is a right $2$-adjoint. Hence

\begin{equation*}
\prod\limits_i \X_i \simeq \prod\limits_i {\jd \tilde y_{\kp}\left(\G_i\right)} \simeq \jd \tilde y_{\kp} \left(\prod\limits_i \G_i\right).
\end{equation*}
It follows that
\begin{eqnarray*}
\prod\limits_i \X_i \simeq a_{\cgt} \left( \prod\limits_i \X_i \right) &\simeq& a_{\cgt} \left(\jd \tilde y_{\kp} \left(\prod\limits_i \G_i\right)\right)\\
&\simeq& \left(\jd \circ a \circ \ju\right) \circ \jd \circ \left(\ju \tilde y\right)\left(\prod\limits_i \G_i\right)\\
&\simeq& \left(\jd \circ a \circ \ju\right) \circ \left(\tilde y\right)\left(\prod\limits_i \G_i\right)\\
&\simeq& \left[\prod\limits_i \G_i\right]_{\cgt}.
\end{eqnarray*}
\end{proof}

\begin{cor}
The bicategory of compactly generated stacks is closed under arbitrary small weak limits.
\label{cor:alllimits}
\end{cor}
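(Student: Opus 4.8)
The plan is to deduce the corollary from Theorem~\ref{thm:allprod} together with the general fact that a 2-category which has arbitrary products and all finite weak limits has all small weak limits. First I would recall why it suffices to produce weak equalizers (or, more precisely, weak 2-limits of cospans) in addition to products: an arbitrary small weak limit can be built as a weak equalizer of a pair of maps between two (possibly large, but here small) products, exactly as in the 1-categorical case, the only subtlety being the insertion of the right 2-cells. So the crux is: is $\Ctst$ closed under finite weak limits inside $\Stcg$?

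Next I would invoke the earlier structural results. By Theorem~\ref{thm:req} (equivalently Theorems~\ref{thm:bicatcg1} and \ref{thm:bicatcg2}), $\Ctst$ is equivalent to a bicategory of topological groupoids, where every object can be presented by a fibrant groupoid after Morita equivalence (Lemma~\ref{lem:fibm}), and $\jd$ identifies $\Ctst$ with the essential image of $\ju:\Tst \to St(\ch)$. The key observation is the same one powering the proof of Theorem~\ref{thm:allprod}: for \emph{fibrant} groupoids the representable $\kpt$-prestack $\tilde y_\kp(\G)$ is already a stack (by Theorem~\ref{thm:imp}, its value on a paracompact space agrees with $[\G]$, and the relevant covers of compact Hausdorff spaces have paracompact members), so $[\G_i]_\cgt \cong \jd\,\tilde y_\kp(\G_i)$; and both $\tilde y_\kp$ and $\jd$ (being a right 2-adjoint) preserve small weak limits. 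Thus for a finite diagram of compactly generated stacks, after replacing each by $\jd\,\tilde y_\kp(\G_i)$ with $\G_i$ fibrant, the weak limit computed in $\Stcg$ agrees with $\jd\,\tilde y_\kp$ applied to the weak limit of the $\G_i$ in $\cg Gpd$ — which exists, since $\cg Gpd$ has finite weak limits (it is the full internal groupoid category of the complete category $\cgh$), and this shows the weak limit is again of the form $[\,\cdot\,]_\cgt$, hence compactly generated.

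Concretely, the steps in order: (1) state the reduction ``products $+$ finite weak limits $\Rightarrow$ all small weak limits'' in a 2-category, citing the standard 2-categorical construction; (2) use Theorem~\ref{thm:allprod} for the products; (3) for finite weak limits, present the finite diagram by fibrant groupoids $\G_i$ via Lemma~\ref{lem:fibm} and Theorem~\ref{thm:imp}, so that $\X_i\cong \jd\,\tilde y_\kp(\G_i)$; (4) form the corresponding finite weak limit $\G$ of the $\G_i$ internally in $\cgh$, and note $\tilde y_\kp$ and $\jd$ preserve it, so the weak limit of the $\X_i$ in $\Stcg$ is $\jd\,\tilde y_\kp(\G)\cong[\G]_\cgt$, which is presentable; (5) conclude via step (1).

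The main obstacle I anticipate is step (4) in the \emph{non-product} case: unlike the product, a weak pullback or weak equalizer diagram of fibrant groupoids need not be presented by a diagram whose constituent maps are strict groupoid homomorphisms — they are generalized morphisms (bibundles) — so one must be careful that the ``internal weak limit in $\cg Gpd$'' is formed in the bicategory of groupoids and bibundles and that $\tilde y_\kp$, as a 2-functor out of that bicategory, still sends it to a weak limit. The clean way around this is to note that $St\circ\tilde y_\kp$ (resp.\ $St_\cgt\circ\tilde y$) factors through the bicategory of fractions $\cg Gpd[W_\cgt^{-1}]$ by Theorem~\ref{thm:bicatcg1}, which is biequivalent to $\Ctst$; since $\Ctst$ is a full sub-2-category of the complete 2-category $\Stcg$, one only needs that $\Ctst$ is closed under the \emph{specific} weak limit cones that $\Stcg$ provides, and steps (3)–(4) exhibit a presentation of that cone. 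I would therefore phrase the finite-weak-limit argument directly in terms of $\jd\,\tilde y_\kp$ of fibrant presentations, mirroring the computation at the end of the proof of Theorem~\ref{thm:allprod}, rather than appealing to an abstract ``$\cg Gpd$ has finite weak limits'' statement, to sidestep the bibundle bookkeeping.
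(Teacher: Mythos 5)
Your proposal is correct in outline and shares the paper's skeleton: reduce completeness to ``arbitrary products $+$ finite weak limits'' (the paper cites Street and Fiore for exactly this reduction, as you do implicitly with your product/equalizer construction), and get the products from Theorem \ref{thm:allprod}. Where you diverge is the finite-weak-limit leg. The paper dispatches it in one line: $\cg Gpd$ has binary weak fibred products, $\tilde y$ preserves weak limits, and $St_{\cgt}$ preserves \emph{finite} weak limits, so $\left[\G \times^{w}_{\K} \h\right]_{\cgt}$ is the weak fibred product of the corresponding compactly generated stacks --- no fibrancy is needed here, since fibrant replacement is only essential for infinite products, where one cannot commute stackification past the limit. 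You instead re-run the fibrant-presentation machinery ($\X_i \cong \jd\,\tilde y_{\kp}(\G_i)$ with $\G_i$ fibrant, and $\jd$, $\tilde y_{\kp}$ preserving weak limits), which also works but is heavier, and it forces exactly the issue you flag: the legs of a cospan must be strictified to genuine groupoid homomorphisms before one can form the weak limit internally in $\cg Gpd$. Your stated ``sidestep'' does not by itself produce such strict presentations; the clean fix within your route is Theorem \ref{thm:imp} itself --- with fibrant target, any map of stacks from a groupoid with paracompact Hausdorff object space is induced by a strict homomorphism, and paracompact object spaces can always be arranged by refining along the $\cgt$-cover of compact subsets --- or, more simply, the paper's observation that $St_{\cgt}$ preserves finite weak limits, which removes the need for fibrancy in this step altogether. (To be fair, the paper's own proof also leaves the strictification of cospans implicit, so your level of detail is comparable; just be aware that your version genuinely needs the Theorem \ref{thm:imp} strictification, whereas the paper's does not.)
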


\begin{proof}
Since $\cg Gpd$ is closed under binary weak fibered products and the stackification $2$-functor $a_{\cgt}$ preserves finite weak limits, the bicategory $\Ctst$ is closed under binary weak fibered products. By Theorem \ref{thm:allprod}, this bicategory has arbitrary small products. Since $\Ctst$ is a $\left(2,1\right)$-category, by \cite{htt} it follows that $\Ctst$ has all limits and hence is complete.
\end{proof}

\subsection{Mapping Stacks of Compactly Generated Stacks}\label{sec:map}

Recall that if $\X$ and $\Y$ are any stacks over $\cg,$ they have a mapping stack
$$\Map\left(\Y,\X\right)\left(T\right)=\Hom_{\Psg}\left(\Y \times T,\X\right).$$
It is the goal of this section to prove that if $\X$ and $\Y$ are compactly generated stacks, then so is $\Map\left(\Y,\X\right)$.

\begin{lem}
If $\Y \simeq \left[\h\right]_{\cgt}$ is a compactly generated stack with $\h_0$ compact Hausdorff, and $\X \simeq \left[\G\right]_{\cgt}$ an arbitrary compactly generated stack, then $\Map\left(\Y,\X\right)$ is a compactly generated stack. More specifically, if $\K$ is a presentation for the topological stack $\Map\left(\left[\h\right],\left[\G\right]\right)$ ensured by Theorem \ref{thm:topcompmapme}, then

$$\Map\left(\Y,\X\right) \simeq \left[\K\right]_\cgt.$$
In particular, $\Map\left(\Y,\X\right)$ and $\Map\left(\left[\h\right],\left[\G\right]\right)$ restrict to the same stack over locally compact Hausdorff spaces.
\label{lem:cpmap}
\end{lem}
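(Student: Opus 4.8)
The plan is to reduce the statement about compactly generated stacks to the already-known statement (Theorem~\ref{thm:comap}) about the topological mapping stack $\Map\left(\left[\h\right],\left[\G\right]\right)$, and then transport it across the equivalence $\jd$ of Corollary~\ref{thm:eqst}. First I would fix the topological stack $\mathcal{M}:=\Map\left(\left[\h\right],\left[\G\right]\right)$, which is topological by Theorem~\ref{thm:comap} since $\h_0$ and $\h_1$ are compact, and choose a topological groupoid $\K$ in $\cg Gpd$ with $\mathcal{M}\cong\left[\K\right]$; then $\left[\K\right]_\cgt = St_\cgt\left(\tilde y\left(\K\right)\right)$ is a compactly generated stack by definition, so it suffices to exhibit an equivalence $\Map\left(\Y,\X\right)\cong\left[\K\right]_\cgt$ in $\Stcg$.

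The key computation is to evaluate both sides on an arbitrary compactly generated Hausdorff space $T$ and compare, using the fact that $\cgt$-stacks are determined by their restriction to $\kp$. By the corollary after Theorem~\ref{thm:eqlshvs} (its stacky analogue), a $\cgt$-stack $\F$ satisfies $\F\cong\jd\ju\F$ when $\F$ comes from an open-cover stack, and more generally $St_\cgt\F\cong\jd St\left(\ju\F\right)$; so the real content lives over compact Hausdorff $T$. For such $T$ I would write
\begin{eqnarray*}
\Map\left(\Y,\X\right)\left(T\right)&=&\Hom_{\Psg}\left(\Y\times T,\X\right)\\
&\cong&\Hom_{\Tst}\left(\left[\h\right]\times T,\left[\G\right]\right)\\
&\cong&\Map\left(\left[\h\right],\left[\G\right]\right)\left(T\right)\\
&\cong&\left[\K\right]\left(T\right)\cong\left[\K\right]_\cgt\left(T\right),
\end{eqnarray*}
where the first nontrivial isomorphism uses Corollary~\ref{cor:locsame} (a compact Hausdorff space is locally compact, so $\cgt$-maps out of $\Y$ into $\X$ agree with ordinary topological-stack maps out of $\left[\h\right]$ into $\left[\G\right]$, after applying $St_\cgt$), the second is the defining universal property of the topological mapping stack, and the last uses Proposition~\ref{prop:eqparat} (again $T$ compact Hausdorff is locally compact) together with the corollary identifying $\left[\K\right]_\cgt$ with $\jd\ju\left[\K\right]$. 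Checking naturality of these equivalences in $T$ over $\kp$, and hence that they assemble to an equivalence of $\kpt$-stacks $\ju\Map\left(\Y,\X\right)\simeq\ju\left[\K\right]_\cgt$, then applying $\jd$ and using that both sides are $\cgt$-stacks, gives $\Map\left(\Y,\X\right)\cong\left[\K\right]_\cgt$.

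The main obstacle I anticipate is the first isomorphism in that chain: justifying carefully that $\Hom_{\Psg}\left(\Y\times T,\X\right)$ for $T$ compact Hausdorff can be identified with the corresponding hom-groupoid of topological stacks. This requires knowing that $\Y\times T$ is still presented by a groupoid with compact object and arrow spaces (which holds: $\h_0\times T$, $\h_1\times T$ are compact), that one may replace $\X$ by a fibrant presentation to invoke Theorem~\ref{thm:imp}, and that the passage from $\Hom_{\Stcg}$ to $\Hom_{\St\left(\cg\right)}$ is controlled by Corollary~\ref{cor:locsame} — whose hypothesis is that the domain stack admits a locally compact (here compact) atlas. One must also be slightly careful that $\Map\left(\Y,\X\right)$ is a priori only a prestack and one is implicitly applying $St_\cgt$; since $St_\cgt$ preserves finite weak limits and $\left[\K\right]_\cgt$ is already a $\cgt$-stack, this is harmless. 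The final sentence of the lemma — agreement over $\lk$ — is then immediate from Proposition~\ref{prop:eqparat} applied to the open-cover stack $\mathcal{M}=\Map\left(\left[\h\right],\left[\G\right]\right)$, since $St_\cgt\mathcal{M}\cong\left[\K\right]_\cgt\cong\Map\left(\Y,\X\right)$ and $St_\cgt$ changes nothing on locally compact Hausdorff spaces.
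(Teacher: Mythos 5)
Your proposal is correct and follows essentially the same route as the paper: reduce to evaluation on compact Hausdorff $T$ (since $\cgt$-stacks are determined by their restriction to $\kp$), identify $\left[\K\right]_{\cgt}\left(T\right)$ with $\left[\K\right]\left(T\right)$ via Proposition \ref{prop:eqparat}, and use Corollary \ref{cor:locsame} together with the fact that $\Y \times T \cong \left[\h \times T\right]$ has a compact (hence locally compact) atlas to pass between $\Hom\left(\Y \times T, \X\right)$ and the topological mapping stack. The only difference is cosmetic: you apply Corollary \ref{cor:locsame} to both source and target at once, while the paper first invokes the definition of $\K$ and then replaces $\left[\G\right]$ by $\X$; your anticipated need for fibrant replacement and Theorem \ref{thm:imp} does not actually arise.
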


\begin{proof}
Since any $\cgt$-stack is completely determined by its restriction to $\kp$, it suffices to show that for any compact Hausdorff space $T \in \kp$, $$\left[\K\right]_{\cgt}\left(T\right) \simeq \Hom_{\Psg}\left(\Y \times T,\X\right).$$
But, since $T$ is compact Hausdorff

\begin{equation*}
\left[\K\right]_{\cgt}\left(T\right) \simeq \left[\K\right]\left(T\right)
\end{equation*}
and because of the definition of $\K$
\begin{equation*}
\left[\K\right]\left(T\right) \simeq \Hom_{\Psg}\left(\left[\h\right] \times T,\left[\G\right]\right).
\end{equation*}
Furthermore, since $\Y \times T \simeq \left[\h \times T\right]_{\cgt}$ and $\h \times T$ has compact Hausdorff object space, by Corollary \ref{cor:locsame},

\begin{eqnarray*}
\Hom_{\Psg}\left(\left[\h\right] \times T,\left[\G\right]\right) &\simeq& \Hom_{\Psg}\left(\left[\h\right]_{\cgt} \times T,\left[\G\right]_{\cgt}\right)\\ &\simeq& \Hom_{\Psg}\left(\Y \times T,\X\right).\\
\end{eqnarray*}
Hence $$\left[\K\right]_{\cgt}\left(T\right) \simeq \Hom\left(\Y \times T,\X\right).$$
\end{proof}

\begin{cor}
If $K$ is a compact Hausdorff space and $\X$ an arbitrary compactly generated stack, then $\Map\left(K,\X\right)$ is a compactly generated stack.
\end{cor}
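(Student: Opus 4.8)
The plan is to deduce this immediately from Lemma~\ref{lem:cpmap} by recognizing a compact Hausdorff space as a compactly generated stack with a particularly simple groupoid presentation. First I would observe that any compact Hausdorff space $K$ is in particular an object of $\cg$, and since the topology $\cgt$ is subcanonical (Corollary~\ref{cor:subcan}), the representable presheaf $\yk\left(K\right)$ is a $\cgt$-stack; it is presentable, the identity map $K \to K$ serving as a $\cgt$-atlas, so $K$ is a compactly generated stack. Next I would exhibit an explicit presentation: $K \cong \left[\h\right]_{\cgt}$, where $\h$ is the unit (trivial) topological groupoid $K \rightrightarrows K$, an object of $\cg Gpd$ whose object space $\h_0 = K$ and arrow space $\h_1 = K$ are both compact.

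With this in hand, the statement is precisely the case $\Y = K$ of Lemma~\ref{lem:cpmap}: taking $\h$ as above and $\X$ the given arbitrary compactly generated stack, the lemma yields that $\Map\left(K,\X\right)$ is a compactly generated stack. In fact the lemma gives the sharper conclusion $\Map\left(K,\X\right) \cong \left[\K\right]_{\cgt}$, where $\K$ is any presentation of the topological stack $\Map\left(K,\left[\G\right]\right)$ furnished by Theorem~\ref{thm:comap} for a groupoid $\G$ presenting $\X$, and moreover $\Map\left(K,\X\right)$ and $\Map\left(K,\left[\G\right]\right)$ restrict to the same stack over locally compact Hausdorff spaces. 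There is essentially no obstacle: the only points needing a word are that the unit groupoid on $K$ lies in $\cg Gpd$ and presents the stack $K$ — both immediate — and the verification that $K$ is indeed a compactly generated (i.e.\ presentable $\cgt$-) stack, which follows from subcanonicity of $\cgt$.
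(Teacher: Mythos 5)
Your proposal is correct and is exactly the argument the paper intends: the corollary is stated as an immediate consequence of Lemma \ref{lem:cpmap}, obtained by presenting $K$ as the compactly generated stack associated to the unit groupoid $K \rightrightarrows K$, whose object and arrow spaces are both compact. Your extra remarks on subcanonicity and the sharper conclusion $\Map\left(K,\X\right) \cong \left[\K\right]_{\cgt}$ are consistent with the paper and add nothing that conflicts with it.
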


\begin{lem}\label{lem:spst}
If $X$ is a compactly generated Hausdorff space and $\X$ an arbitrary compactly generated stack, then $\Map\left(X,\X\right)$ is a compactly generated stack.
\end{lem}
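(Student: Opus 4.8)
The plan is to reduce the case of a general compactly generated Hausdorff space $X$ to the already-established compact case (Lemma \ref{lem:cpmap} and its corollary) by exploiting the defining property of the topology $\cgt$: by Corollary \ref{cor:comcov}, the inclusion of all compact subsets $\left(K_\alpha \hookrightarrow X\right)$ is a $\cgt$-cover of $X$. Since $\cgt$ is subcanonical (Corollary \ref{cor:subcan}), this means that in $\Stcg$ we have $X \cong \underset{K_\alpha}\holim K_\alpha$ as a weak colimit over the diagram indexed by inclusions of compact subsets (here I am using that the Yoneda embedding into the $\cgt$-stacks sends a $\cgt$-cover to a weak colimit presentation, exactly the phenomenon discussed in Section \ref{sec:cgtop}). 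Because $\Map\left(-,\X\right)$ turns weak colimits in the first variable into weak limits, this gives
$$\Map\left(X,\X\right) \cong \underset{K_\alpha \hookrightarrow X}\hl \Map\left(K_\alpha,\X\right),$$
the weak limit being taken in $\Stcg$, which exists since any 2-category of stacks is complete.

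Next I would invoke Lemma \ref{lem:cpmap} (in the form of its corollary): each $\Map\left(K_\alpha,\X\right)$ is a compactly generated stack, since each $K_\alpha$ is compact Hausdorff. So the right-hand side is a weak limit, over a small diagram, of compactly generated stacks. The diagram is built out of binary weak fibered products and arbitrary small products of the $\Map\left(K_\alpha,\X\right)$, together with their structure maps; more precisely, one can present this weak limit via the standard ``cotensor/equalizer'' construction from products and binary weak pullbacks. By Corollary \ref{cor:alllimits}, the 2-category $\Ctst$ of compactly generated stacks is closed under arbitrary small weak limits. Hence $\underset{K_\alpha}\hl \Map\left(K_\alpha,\X\right)$ lies in $\Ctst$, and therefore so does $\Map\left(X,\X\right)$.

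The main obstacle is justifying the first step carefully — namely that $\Map\left(-,\X\right)$ sends the weak colimit $X \cong \underset{K_\alpha}\holim K_\alpha$ to the corresponding weak limit $\Map\left(X,\X\right) \cong \underset{K_\alpha}\hl \Map\left(K_\alpha,\X\right)$ \emph{inside the 2-category of $\cgt$-stacks}. The issue flagged in Section \ref{subsec:cgh} was precisely that the Yoneda embedding does not preserve colimits; the whole point of passing to the topology $\cgt$ is that it now does preserve this particular colimit, because the compact subsets form a $\cgt$-cover. Since $\Map\left(\Y,\X\right)\left(T\right) = \Hom_{\Psg}\left(\Y \times T, \X\right)$ and products with $T$ commute with weak colimits of $\cgt$-stacks (being themselves computed via the left 2-adjoint $St_{\cgt}$, which preserves weak colimits), one gets for each $T$ an equivalence of groupoids
$$\Hom_{\Psg}\left(X \times T, \X\right) \cong \underset{K_\alpha}\hl \Hom_{\Psg}\left(K_\alpha \times T, \X\right),$$
naturally in $T$, which is exactly the claimed identification of mapping stacks. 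Once this is in hand, the remainder is the formal ``closed under small weak limits'' argument above, and there is nothing further to check.
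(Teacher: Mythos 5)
Your proposal is correct and follows essentially the same route as the paper: write $X$ as the weak colimit of its compact subsets via the $\cgt$-cover of Corollary \ref{cor:comcov} and Proposition \ref{covlim}, identify $\Map\left(X,\X\right)$ with $\underset{K_\alpha \hookrightarrow X}\hl \Map\left(K_\alpha,\X\right)$, and conclude by the compact case together with Corollary \ref{cor:alllimits}. The only (immaterial) difference is in justifying the identification after multiplying by a test space $T$: the paper simply observes that $\left(K_\alpha \times T \hookrightarrow X \times T\right)$ is itself a $\cgt$-cover and applies Proposition \ref{covlim} again, whereas you argue that $- \times T$ preserves weak colimits of stacks.
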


\begin{proof}
Let $\left(K_\alpha \stackrel{i_\alpha}{\hookrightarrow} X\right)$ denote the inclusion of all compact subsets of $X$. This is a $\cgt$-cover for $X$. Let $Y$ be an arbitrary compactly generated Hausdorff space. 

By Proposition \ref{covlim}, we have that in $\Ctst$

\begin{equation*}
X \simeq  \underset{K_\alpha \hookrightarrow X} \hc K_\alpha
\end{equation*}
and
\begin{equation*}
X \times Y \simeq  \underset{K_\alpha \times Y \hookrightarrow X \times Y} \hc \left(K_\alpha \times Y\right),
\end{equation*}
the latter since colimits are universal in any bicategory of stacks.
Hence
\begin{eqnarray*}
\Map\left(X,\X\right)\left(Y\right) &\simeq& \Hom_{\Psg}\left(X \times Y,\X\right)\\
&\simeq& \Hom_{\Psg}\left(\underset{K_\alpha \times Y \hookrightarrow X \times Y} \hc \left(K_\alpha \times Y\right),\X\right)\\
&\simeq& \underset{K_\alpha \times Y \hookrightarrow X \times Y} \hl \Hom_{\Psg}\left(K_\alpha \times Y,\X\right)\\
&\simeq& \underset{K_\alpha \times Y \hookrightarrow X \times Y} \hl \Hom_{\Psg}\left(Y,\Map\left(K_\alpha,\X\right)\right)\\
&\simeq& \Hom_{\Psg}\left(Y,\underset{K_\alpha \hookrightarrow X} \hl \Map\left(K_\alpha,\X\right)\right)\\
&\simeq& \left(\underset{K_\alpha \hookrightarrow X} \hl \Map\left(K_\alpha,\X\right)\right)\left(Y\right).\\
\end{eqnarray*}
Therefore
\begin{equation*}
\Map\left(X,\X\right) \simeq \underset{K_\alpha \hookrightarrow X} \hl \Map\left(K_\alpha,\X\right).
\end{equation*}
So by Corollary \ref{cor:alllimits}, $\Map\left(X,\X\right)$ is a compactly generated stack.
\end{proof}

\begin{thm}\index{mapping stack}
If $\X$ and $\Y$ are arbitrary compactly generated stacks, then $\Map\left(\Y,\X\right)$ is a compactly generated stack.
\label{thm:carclos}
\end{thm}

\begin{proof}

Let $\Y$ be presented by a topological groupoid $\h$. By Lemma \ref{lem:319}, we can write $\Y$ as the weak colimit of the following diagram:
$$\xymatrix{ {\h_1 \times _{\h_0}\h_1} \ar@<0.9ex>[r] \ar@<0.0ex>[r] \ar@<-0.9ex>[r] & \h_1 \ar@<+.7ex>[r]\ar@<-.7ex>[r]& \h_0},$$
where the three parallel arrows are the first and second projections and the composition map.

Furthermore, let $X$ be any compactly generated Hausdorff space. Then $\Y \times X$ is the weak colimit of

$$\xymatrix{ {\left(\h_1 \times _{\h_0}\h_1\right) \times X}  \ar@<0.9ex>[r] \ar@<0.0ex>[r] \ar@<-0.9ex>[r] & \h_1 \times X \ar@<+.7ex>[r]\ar@<-.7ex>[r]& \h_0 \times X}.$$
With this in mind, in much the same way as Lemma \ref{lem:spst}, some simple calculations allow one to express $\Map\left(\Y,\X\right)$ as a weak limit of a diagram involving $\Map\left(\h_1 \times _{\h_0} \h_1,\X\right)$, $\Map\left(\h_1,\X\right)$, and $\Map\left(\h_0,\X\right),$ all of which are compactly generated stacks by Lemma \ref{lem:spst}.
\end{proof}

We presented the proof of Cartesian-closure in this way to emphasize the role of completeness and compact generation. We will now give a concrete description of the mapping stack of two compactly generated stacks. Note that since the inclusion of all compact subsets of a space is a $\cgt$-cover, every compactly generated stack has a locally compact, paracompact Hausdorff atlas.
\begin{thm}
Let $\X\simeq \left[\G\right]_{\cgt}$ and $\Y\simeq\left[\h\right]_{\cgt}$ be two compactly generated stacks. Assume (without loss of generality) that $\h_0$ is locally compact and paracompact Hausdorff. Then $$\left[Fib\left(\G\right)^\h\right]_\cgt \simeq \Map\left(\Y,\X\right).$$
\end{thm}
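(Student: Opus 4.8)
<br>

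The plan is to reduce the statement to the case already handled in Theorem \ref{thm:cw} and Lemma \ref{lem:spst} by exploiting that a $\cgt$-stack is determined by its restriction to $\kp$, together with the fact that both sides transform weak colimits in the $\Y$-variable into weak limits. First I would recall, via Lemma \ref{lem:fibm}, that $\G$ and $Fib(\G)$ are $\cgt$-Morita equivalent, so $\X \cong \left[Fib(\G)\right]_{\cgt}$, and that $Fib(\G)$ is fibrant with compactly generated Hausdorff object and arrow spaces. Since $\h_0$ is assumed paracompact Hausdorff (hence so is $\h_1$, being a subspace of $\h_0 \times \h_0$ in the relevant presentations, or at worst one replaces $\h$ by a Morita-equivalent groupoid with paracompact arrow space), the groupoid $Fib(\G)^\h$ of internal homomorphisms is the right object to compare against. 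The map $\varphi : \left[Fib(\G)^\h\right]_{\cgt} \to \Map(\Y,\X)$ is the one constructed exactly as in the proof of Theorem \ref{thm:cw}: an internal homomorphism $T \times \h \to Fib(\G)$ is reinterpreted, using the Morita equivalence $\xi_\G$, as an object of $\Hom_{\Psg}(T \times \Y, \X)$.

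Next I would verify $\varphi$ is an equivalence by checking it pointwise on $T \in \kp$. On one side, because $T$ is compact Hausdorff, $\left[Fib(\G)^\h\right]_{\cgt}(T) \cong \left[Fib(\G)^\h\right](T) \cong \hc_{\mathcal{U}} \Hom_{\cg Gpd}(T_{\mathcal{U}}, Fib(\G)^\h)$, and by the internal-hom adjunction in $\cg Gpd$ this is $\hc_{\mathcal{U}} \Hom_{\cg Gpd}(T_{\mathcal{U}} \times \h, Fib(\G))$. Using that $T$ (being compact Hausdorff) is a shrinking space, one may take the $\mathcal{U}$ to be covers by closed, hence compact, neighborhoods, so $T_{\mathcal{U}}$ has compact — in particular paracompact — object space; then $T_{\mathcal{U}} \times \h$ has paracompact Hausdorff object space, and Theorem \ref{thm:imp} identifies $\Hom_{\cg Gpd}(T_{\mathcal{U}} \times \h, Fib(\G))$ with $\Hom_{\Tst}(\left[T_{\mathcal{U}} \times \h\right], \left[\G\right])$. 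On the other side, $\Map(\Y,\X)(T) = \Hom_{\Psg}(\Y \times T, \X)$, and writing $\Y$ as the weak colimit of its nerve diagram (Lemma \ref{lem:319}) exactly as in Theorem \ref{thm:carclos} converts this into the matching weak limit over the $\mathcal{U}$; alternatively, since $T$ is compact Hausdorff one invokes Corollary \ref{cor:locsame} to pass freely between $\left[\G\right]$ and $\X = \left[\G\right]_{\cgt}$ as targets, because $T_{\mathcal{U}} \times \h$ has locally compact (indeed compact/paracompact) object space. Matching the two descriptions gives $\left[Fib(\G)^\h\right]_{\cgt}(T) \cong \Map(\Y,\X)(T)$ naturally in $T \in \kp$, and since both are $\cgt$-stacks this upgrades to an equivalence of $\cgt$-stacks, which is the claim.

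The main obstacle I anticipate is the careful handling of the paracompactness hypotheses needed to apply Theorem \ref{thm:imp}: that theorem requires the source groupoid $T_{\mathcal{U}} \times \h$ to have paracompact Hausdorff object space, so one needs both that $T$ admits covers $\mathcal{U}$ with $T_{\mathcal{U}}$ having paracompact object space (true since compact Hausdorff spaces are shrinking spaces and closed subsets of compact spaces are compact) \emph{and} that $\h_0$ is paracompact Hausdorff (the stated hypothesis) so that the product $(T_{\mathcal{U}})_0 \times \h_0$ is paracompact — here one uses that a product of a compact Hausdorff space with a paracompact Hausdorff space is paracompact. A secondary subtlety is checking that $\varphi$ is genuinely a map of stacks (2-natural in $T$), and that the chain of equivalences on the $\Map$-side — in particular the passage from $\X$ to $\left[\G\right]$ as target via Corollary \ref{cor:locsame}, and the identification of the weak colimit presentation of $\Y \times T$ — is compatible with these natural transformations; this is exactly the ``some simple calculations'' of Theorem \ref{thm:carclos} made explicit, and is routine but bookkeeping-heavy. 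Finally, one should remark that the ``without loss of generality'' on $\h_0$ is justified because every compactly generated stack admits a paracompact (locally compact) Hausdorff atlas, as noted just before the theorem, so $\h$ may be replaced by such a presentation without changing $\Y$ or $Fib(\G)^\h$ up to equivalence.
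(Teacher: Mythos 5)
Your proposal is correct and follows essentially the same route as the paper: reduce to checking the equivalence on compact Hausdorff $T$, run the argument of Theorem \ref{thm:cw} with the covers $T_{\mathcal{U}}$, and apply Theorem \ref{thm:imp} using the key fact that the product of a compact Hausdorff space with a paracompact Hausdorff space is paracompact (together with Corollary \ref{cor:locsame} and the locally compact paracompact atlas to pass between $\left[\G\right]$ and $\left[\G\right]_{\cgt}$). The only blemish is your parenthetical that $\h_1$ is paracompact as a subspace of $\h_0\times\h_0$ — false for a general groupoid — but it is harmless, since Theorem \ref{thm:imp} only requires the \emph{object} space of $T_{\mathcal{U}}\times\h$ to be paracompact Hausdorff.
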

\begin{proof}
It suffices to check that $\left[Fib\left(\G\right)^\h\right]_\cgt$ and $\Map\left(\Y,\X\right)$ agree on every compact Hausdorff space $T$. Following the same proof as Theorem \ref{thm:cw}, one only has to realize that the product of a compact Hausdorff space with a paracompact Hausdorff space is paracompact Hausdorff \cite{paraprod}. The rest of the proof is identical.
\end{proof}

\begin{rmk}
This implies that the bicategory of topological stacks (in compactly generated Hausdorff spaces) with locally compact Hausdorff atlases is Cartesian closed. This might seem surprising since, after all, locally compact Hausdorff spaces are quite far from being Cartesian closed. What is happening is that the exponential of two locally compact Hausdorff spaces is not a space in this description, but a stack! (This stack is actually a sheaf.) In fact, the category of compactly generated Hausdorff spaces embeds into this bicategory by sending a space $X$ to the stack associated to the topological groupoid $\left(X\right)^{\left(id\right)}_{\K}$, where $\K$ denotes the $\cgt$-cover which is the inclusion of all compact subsets of $X$.
\end{rmk}

\subsection{Homotopy Types of Compactly Generated Stacks}\label{sec:homotopytype}

In \cite{NoohiH}, Noohi constructs a functorial assignment to each topological stack $\X,$ a weak homotopy type. For $\X$ a topological stack, its weak homotopy type turns out to be the weak homotopy type of $\left\|\G\right\|$ for any topological groupoid $\G$ for which $\X \simeq \left[\G\right]$. Moreover, each topological stack $\X$ admits at atlas which is also a weak homotopy equivalence; the canonical atlas $$\varphi:\left\|\G\right\| \to \X$$ coming from Lemma \ref{lem:fibm} is a weak homotopy equivalence.

A particular corollary is:

\begin{cor}
If $\G \to \h$ is a Morita equivalence, the induced map $$\left\|\G\right\| \to \left\|\h\right\|$$ is a weak homotopy equivalence.
\end{cor}

This is a classical result. For instance, it is proven for the case of \'etale topological groupoids in \cite{Class} and \cite{Weak}. \footnote{I would like to thank Ieke Moerdijk for explaining to me how to extend his method of proof to any topological groupoid whose object and arrow spaces have a basis of contractible open sets.}

In this section, we extend these results to the setting of compactly generated stacks. We begin with a technical notion of a shrinkable map, which will prove quite useful.

\begin{dfn}\cite{Dold}\index{shrinkable map}
A continuous map $f:X \to B$ is \textbf{shrinkable} if admits a section $s:B \to X$ together with a homotopy $$H:I \times X \to X$$ from $sf$ to $id_X$ over $B,$ i.e. for all $t$, the map $$H_t:X \to X$$ is a map in $Top/B$ from $f$ to $f$.
\end{dfn}

\begin{rmk}
Every shrinkable map is in particular a homotopy equivalence.
\end{rmk}

\begin{dfn}
A continuous map $f:X \to B$ is \textbf{locally shrinkable} \cite{NoohiH} if there exists an open cover $\left(U_\alpha\right)$ of $B$ such that for each $\alpha,$ the induced map $$f|_{U_{\alpha}}:f^{-1}\left(U_{\alpha}\right) \to U_{\alpha}$$ is shrinkable. A map $f:X \to B$ is called $\cgt$-locally shrinkable if there exists a $\cgt$-cover $\left(V_i\right)$ of $B$ such that the same condition holds.
\end{dfn}

Clearly, shrinkable $\Rightarrow$  locally shrinkable $\Rightarrow$ $\cgt$-locally shrinkable.

\begin{dfn}\index{quasi-shrinkable}
A continuous map $f:X \to B$ is \textbf{quasi-shrinkable} if for every map $T \to B$ from a locally compact, paracompact Hausdorff space $T,$ the induced map $$X \times_{Y} T \to T$$ is shrinkable.
\end{dfn}

\begin{lem}\label{lem:cgqsh}
Every $\cgt$-locally shrinkable map is quasi-shrinkable.
\end{lem}
\begin{proof}
Since every $\cgt$-cover of a locally compact Hausdorff space can be refined by an open one, and every open cover of a paracompact Hausdorff space can be refined by a numerable one, the result follows from \cite{Dold}, Corollary 3.2.
\end{proof}

\begin{dfn}\index{universal weak equivalence}
A map $f:X \to Y$ of spaces is a \textbf{universal weak equivalence} if for any map $T \to Y$, the induced map $T \times_{Y} X \to T$ is a weak homotopy equivalence.
\end{dfn}

The following lemma is a useful characterization of universal weak equivalences:

\begin{lem}
A map $f:X \to B$ is a universal weak equivalence if and only if, for all $n \ge 0,$ for any map $D^n \to B$ from the $n$-disk, the induced map $X \times_B D^n \to D^n$ is a weak homotopy equivalence (i.e. $X \times_B D^n$ is weakly contractible).
\end{lem}

\begin{proof}
One direction is clear by definition.

Conversely, let $f:X \to B$ be a map satisfying the stated hypothesis for each $n$-disk. We will first show that $f$ is a weak equivalence.

Denote by $I$ the unit interval $\left[0,1\right],$ and let $E\left(f\right)$ denote the homotopy fiber of $f,$
$$\xymatrix{E\left(f\right) \ar[r] \ar[d] & B^I \ar[d]^-{ev(0)}\\
X \ar[r]^-{f} & B,}$$ where $ev(0)$ is evaluation at $0$.

We may factor $X \to B$ as $$X \to E\left(f\right) \stackrel{ev(1)}{-\!\!\!-\!\!\!-\!\!\!-\!\!\!-\!\!\!\longrightarrow} B$$ where $X \to E\left(f\right)$ is a homotopy equivalence and the evaluation map (at $1$) is a fibration. From the long exact sequence of homotopy groups resulting from the fiber sequence $$E\left(f\right)_b \to E_f \to B,$$ it suffices to show that for each $b \in B$ in the image of $f,$ the homotopy fiber $E\left(f\right)_b=ev(1)^{-1}\left(b\right)$ is weakly contractible.

Suppose we are given a based map $l:S^{n-1} \to E\left(f\right)_b.$ Identifying $D^n$ with the cone on $S^{n-1},$ this is the same as giving a commutative diagram

$$\xymatrix{S^{n-1} \ar[d]_-{l_1} \ar[r] & D^n \ar[d]^-{l_2}\\
X \ar[r]^-{f} & B,}$$
where $S^{n-1} \to D^n$ is the  canonical map.

Let $\tilde f$ denote the induced map
$$\xymatrix{X \times_B D^n \ar[d] \ar[r]^-{\tilde f} & D^n \ar[d]^-{l_2} \\
X \ar[r]^-{f} & B.}$$
By our hypothesis, $\tilde f$ is a weak homotopy equivalence. Moreover, it is easy to check that the map $$l:S^{n-1} \to E\left(f\right)_b$$ factors through the canonical map $$q:E(\tilde f)_b \to E(f)_b,$$ say $l=ql'$ for some $$l':S^{n-1} \to E(\tilde f)_b.$$  As $\tilde f$ is a weak equivalence, $E(\tilde f)_b$ is weakly contractible. So, $l'$ is null-homotopic, and hence so is $l.$ It follows that $E\left(f\right)_b$ is also weakly contractible, and thus $f$ is a weak equivalence.

Moreover, $f$ is in fact a universal weak equivalence since if $T \to X$ is any map, the induced map $X \times_B T$ satisfies the same hypothesis that $f$ does.
\end{proof}

\begin{cor}\label{cor:tgood}
Every quasi-shrinkable map is a universal weak equivalence.
\end{cor}

\begin{dfn}
A representable map $\X \to \Y$ of stacks on $\cgh$ (with respect to either the $\cgt$-topology or the open cover topology) is said to be shrinkable, locally-shrinkable, $\cgt$-locally shrinkable, quasi-shrinkable, or a universal weak equivalence, if and only if for every map $T \to \X$ from a topological space, the induced map $\X \times_{\Y} T \to T$ is.
\end{dfn}

\begin{lem}\label{lem:cgtshr}
Let $f:\X \to \Y$ and $g:\Y' \to \Y$ be morphisms in $\St_{\cgt}\left(\cgh\right)$ such that $g$ is a $\cgt$-epimorphism and the induced map $$\X \times_{\Y} \Y'  \to \Y'$$ is a representable $\cgt$-locally shrinkable map. Then $f$ is also representable and $\cgt$-locally shrinkable.
\end{lem}
\begin{proof}
Let $h:T \to \Y$ be arbitrary. Choose a $\cgt$-cover $\left(V_\alpha\right)$ of $T$ such that for all $\alpha,$ there is a $2$-commutative diagram
$$\xymatrix{V_\alpha \ar[r] \ar[d]_-{h_\alpha} & T \ar[d]^-{h}\\
\Y' \ar[r]^-{g} & \Y.}$$
Note, by assumption, the induced maps $$\X \times_{\Y} V_\alpha \to V_\alpha$$ are $\cgt$-locally shrinkable maps of topological spaces. By refining this $\cgt$-cover if necessary, we can arrange that each of these maps is in fact shrinkable. It follows that $\X \times_{\Y} T$ is a topological space, and that the collection $\left(\X \times_{\Y} V_\alpha\right)$ is a $\cgt$-cover of it. Since the restriction of $$\X \times_{\Y} T \to T$$ to each element of this cover is $$\X \times_{\Y} V_\alpha \to V_\alpha,$$ it follows that $$\X \times_{\Y} T \to T$$ is $\cgt$-locally shrinkable.
\end{proof}

\begin{thm}\label{thm:atlslcsh}
Let $\X \simeq \left[\G\right]_\cgt$ be a compactly generated stack. Then the atlas $\left\|\G\right\| \to \X$ is $\cgt$-locally shrinkable.
\end{thm}

\begin{proof}
In \cite{NoohiH}, it is shown that we have a $2$-Cartesian diagram

$$\xymatrix{ \left\|\Ee\G\right\| \ar[r]^{f} \ar[d] & \G_0 \ar[d]\\
\left\|\G\right\| \ar[r]^{\varphi} & \left[\G\right]\\}$$
with $f$ shrinkable. Now, the stackification $2$-functor $a_\cgt$ commutes with finite weak limits, hence, the following is also a $2$-Cartesian diagram:

$$\xymatrix{ \left\|\Ee\G\right\| \ar[r]^{f} \ar[d] & \G_0 \ar[d]\\
\left\|\G\right\| \ar[r]^{\bar \varphi} & \X.\\}$$
Since the map $\G_0 \to \X$ is a $\cgt$-epimorphism and $f$ is shrinkable, it follows from Lemma \ref{lem:cgtshr} that $\bar \varphi$ is $\cgt$-locally shrinkable.
\end{proof}

\begin{cor}\label{cor:uniwkatls}
Let $\X \simeq \left[\G\right]_\cgt$ be a compactly generated stack. Then the atlas $\left\|\G\right\| \to \X$ is a universal weak equivalence.
\end{cor}

\begin{proof}
This follows immediately from Lemma \ref{lem:cgqsh} and Corollary \ref{cor:tgood}.
\end{proof}

\begin{cor}\label{cor:Morinvhtpy}
Let $\phi:\G \to \h$ be a $\cgt$-Morita equivalence between two topological groupoids $\G$ and $\h$. Then $\phi$ induces a weak homotopy equivalence

$$\left\|\phi\right\|: \left\|\G\right\| \to \left\|\h\right\|.$$
\end{cor}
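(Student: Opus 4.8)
The plan is to deduce the statement from the theorem just proved — that for a compactly generated stack $\X\cong\left[\G\right]_\cgt$ the atlas $\left\|\G\right\|\to\X$ is a universal weak equivalence — together with the fact that a $\cgt$-Morita equivalence $\phi\colon\G\to\h$ induces an equivalence $\left[\phi\right]_\cgt\colon\left[\G\right]_\cgt\stackrel{\sim}{\to}\left[\h\right]_\cgt$ of compactly generated stacks. First I would record the $2$-commutative square
$$\xymatrix{\left\|\G\right\| \ar[d]_{\varphi_\G} \ar[r]^{\left\|\phi\right\|} & \left\|\h\right\| \ar[d]^{\varphi_\h} \\ \left[\G\right]_\cgt \ar[r]^-{\left[\phi\right]_\cgt} & \left[\h\right]_\cgt,}$$
which comes from functoriality of the fat geometric realization of the simplicial nerve and from the naturality of the universal principal bundle $\left\|\E\G\right\|$ in $\G$ (as in \cite{Andre}, \cite{NoohiH}). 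Writing $\X:=\left[\h\right]_\cgt$ and transporting $\varphi_\G$ along the equivalence $\left[\phi\right]_\cgt$, both $\varphi_\G$ and $\varphi_\h$ become universal weak equivalences with target $\X$, and the square reads $\varphi_\h\circ\left\|\phi\right\|\simeq\varphi_\G$.

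Next I would form the weak fibred product $P:=\left\|\G\right\|\times_\X\left\|\h\right\|$. Since both legs are $\cgt$-atlases, $P$ is representable by a compactly generated Hausdorff space, and the two projections $\mathrm{pr}_1\colon P\to\left\|\G\right\|$ and $\mathrm{pr}_2\colon P\to\left\|\h\right\|$ are pullbacks of the universal weak equivalences $\varphi_\h$ and $\varphi_\G$ respectively along maps from spaces, hence are weak homotopy equivalences. The $2$-commutativity of the square yields, by the universal property of $P$, a map $s\colon\left\|\G\right\|\to P$ with $\mathrm{pr}_1\circ s\simeq\mathrm{id}$ and $\mathrm{pr}_2\circ s\simeq\left\|\phi\right\|$; two-out-of-three for weak homotopy equivalences of spaces then forces $s$ to be one, and therefore $\left\|\phi\right\|\simeq\mathrm{pr}_2\circ s$ is a weak homotopy equivalence. (Alternatively, one can argue that $\left\|\G\right\|$ and $\left\|\h\right\|$ both realize the terminal object in the homotopy category of CW-complexes over $\X$ and that $\left\|\phi\right\|$ is a morphism over $\X$, but this requires the same commutativity input.)

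The step I expect to be the main obstacle is the construction of the $2$-commutative square, i.e.\ verifying that classifying the universal $\G$-bundle over $\left\|\G\right\|$ and then transporting along $\left[\phi\right]_\cgt$ agrees, up to a canonical $2$-isomorphism, with first applying $\left\|\phi\right\|$ and then classifying the universal $\h$-bundle over $\left\|\h\right\|$. This is essentially the naturality of $\G\mapsto\left\|\E\G\right\|$ and of the associated atlas, which is implicit in \cite{Andre} but should be made explicit here; once it is in place the remainder is formal, and the earlier remark that universal weak equivalences are local on the target for $\cgt$ can be used to streamline the representability bookkeeping.
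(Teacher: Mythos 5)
Your proposal is correct and follows essentially the same route as the paper: form the weak fibred product $\left\|\G\right\| \times_{\X} \left\|\h\right\|$ of the two classifying-space atlases over $\X \cong \left[\G\right]_\cgt \cong \left[\h\right]_\cgt$, note that both projections are weak equivalences because they are pullbacks of universal weak equivalences, and conclude that $\left\|\phi\right\|$ is a weak equivalence via the $2$-commuting square. Your write-up merely makes explicit the section $s$, the two-out-of-three step, and the naturality of $\G \mapsto \left\|\G\right\|$ that the paper leaves implicit in asserting the diagram $2$-commutes.
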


\begin{proof}

Let $\X:=\left[\G\right]_\cgt \simeq \left[\h\right]_\cgt$. Then each atlas $\left\|\G\right\| \to \X$ and $\left\|\h\right\| \to \X$ is a universal weak equivalence. The following diagram $2$-commutes (with the outer square Cartesian):

$$\xymatrix{ \left\|\G\right\| \times_\X \left\|\h\right\| \ar[r]^-{\beta} \ar[d]_{\alpha} & \left\|\h\right\| \ar[d]\\
\left\|\G\right\| \ar[r] \ar@{.>}[ur]_{\left\|\varphi\right\|} & \X.\\}$$
Since each atlas is a universal weak equivalence, $\alpha$ and $\beta$ are weak equivalences, and hence so is $\left\|\varphi\right\|$.
\end{proof}

\begin{ex}
Let $X$ be a topological space. Consider the inclusion of all its compact subsets $\left(K_\alpha \hookrightarrow X\right)$. This is a $\cgt$-cover, so the associated groupoid $$\mathcal{K}=\left(\coprod K_\alpha \cap K_\beta \rightrightarrows \coprod K_\alpha\right)$$ is $\cgt$-Morita equivalent to $X$. It follows from Corollary \ref{cor:Morinvhtpy} that $\left\|\K\right\|$ is weakly homotopy equivalent to $X$.
\end{ex}

We now copy Noohi in \cite{NoohiH} to give a functorial assignment to each compactly generated stack a weak homotopy type.

Given a bicategory $\C,$ denote the $1$-category obtained by identifying equivalent $1$-morphisms by $\tau_1\left(\C\right).$ Suppose we are given a full sub-bicategory $B$ of $\C$ which is in fact (equivalent to) a $1$-category, and is closed under pullbacks. For example, consider $\C$ to be the bicategory of compactly generated stacks, $\Ctst,$ and for $B$ to the category of compactly generated Hausdorff spaces $\cgh.$ Let $R$ be a class of morphisms in $B$ which contains all isomorphisms, and is stable under pullback. Let $\tilde R$ denote the class of morphisms $f:\X \to \Y$ in $\C$ such that for every $h:T \to \Y,$ with $T$ in $B,$ the weak pullback $\X \times_{\Y} T$ is in $B$ and the induced map $$\X \times_{\Y} T \to T$$ is in $R.$ In the case where $\C=\Ctst$ and $B=\cgh,$ then $f$ is a representable map with property $R$.

\begin{lem}\cite{NoohiH}\label{lem:Noohitheta}
In the set up just described, if for every object $\X$ of $\C$ there exists a morphism $$\varphi\left(\X\right):\Theta\left(\X\right) \to \X$$ in $\tilde R$ from an object $\Theta\left(\X\right)$ of $B,$ then there is an induced adjunction
$$\xymatrix{\tilde R^{-1}\tau_1\left(\C\right) \ar@<-.5ex>_-\Theta[r] & R^{-1}B \ar@<-.5ex>[l]_-y}$$
with $y \mspace{2mu} \rt \mspace{2mu} \Theta,$ and with $y$ fully faithful. Moreover, the components of the co-unit of this adjunction are in $\tilde R.$
\end{lem}

\begin{thm}\label{thm:functorhomtype}
There exists a functor $\Omega:\Ctst \to \operatorname{Ho}\left(\T\right)$ assigning to each compactly generated stack $\X,$ a weak homotopy type. Moreover, for each $\X,$ there is a $\cgt$-atlas $X \to \X,$ which is a universal weak equivalence from a space $X$ whose homotopy type is $\Omega\left(\X\right).$
\end{thm}
\begin{proof}
In the previous lemma, let $\C=\Ctst,$ $B=\cgh,$ and let $R$ be the class of universal weak equivalences. Use Corollary \ref{cor:uniwkatls} to pick for each compactly generated stack $\X,$ a $\cgt$-atlas which is a universal weak equivalence. Lemma \ref{lem:Noohitheta} implies that there is an induced adjunction
$$\xymatrix{\tilde R^{-1}\tau_1\left(\Ctst\right) \ar@<-.5ex>_-\Theta[r] & R^{-1}\cgh \ar@<-.5ex>[l]_-y},$$ where the unit of this adjunction is an equivalence, and the co-unit for $\X,$ $$y\Theta\left(\X\right) \to \X,$$ is the chosen atlas.

Let $S$ denote the class of weak homotopy equivalences in $\cgh.$ Let $T$ denote $y\left(S\right).$ Then, since $$y\left(S\right)=T,$$ and $$\Theta\left(T\right)=S,$$ it follows that there is an induced adjunction
$$\xymatrix{S^{-1} \left(\tilde R^{-1}\tau_1\left(\Ctst\right)\right) \ar@<-.5ex>_-{\bar\Theta}[r] & S^{-1}\left(R^{-1}\cgh\right) \ar@<-.5ex>[l]_-{\bar y}}.$$
Note that there are canonical equivalences $$S^{-1} \left(\tilde R^{-1}\tau_1\left(\Ctst\right)\right) \simeq S^{-1}\tau_1\left(\Ctst\right),$$ and $$S^{-1}\left(R^{-1}\cgh\right) \simeq S^{-1}\cgh.$$ Moreover, since every topological space has the weak homotopy type of a compactly generated Hausdorff space, $S^{-1}\cgh$ is equivalent to the homotopy category of spaces, $\operatorname{Ho}\left(\T\right).$ Note that the following diagram $2$-commutes
$$\xymatrix{& \tilde R^{-1}\tau_1\left(\Ctst\right) \ar[r]^-{\Theta} \ar[d] & R^{-1}\cgh \ar[d]\\
\Ctst \ar[ru] \ar[r] & S^{-1}\tau_1\left(\Ctst\right) \ar[r]^-{\bar \Theta} & \operatorname{Ho}\left(\T\right).}$$
Denote either (naturally isomorphic) composite by $\Omega:\Ctst \to \operatorname{Ho}\left(\T\right).$
\end{proof}

\subsection{Comparison with Topological Stacks}\label{sec:comp}

In this subsection, we will extend the results of the previous section to give a functorial assignment of a weak homotopy type to a wider class of stacks, which include all compactly generated stacks and all topological stacks. We will then show that for a given topological stack $\X,$ the induced map to its associated compactly generated stack $a_{\cgt}\left(\X\right)$ is a weak homotopy equivalence. Finally, we will show in what sense compactly generated stacks are to topological stacks what compactly generated spaces are to topological spaces (Theorem \ref{thm:kellyref}).

\begin{prop}\label{prop:qausitopol}
For a stack $\X$ over compactly generated Hausdorff spaces (with respect to the open cover topology) whose diagonal  $$\Delta:\X \to \X \times \X$$ is representable, the following conditions are equivalent:
\begin{itemize}
\item[i)] the $\cgt$-stackification of $\X$ is a compactly generated stack,
\item[ii)] there exists a topological space $X$ and a morphism $X \to \X$ such that, for all spaces $T$, the induced map $$T \times_{\X} X \to T$$ admits local sections with respect to the topology $\cgt$ (i.e. is a $\cgt$-covering morphism; See Definition \ref{dfn:covmor}),
\item[iii)] there exists a topological space $X$ and a morphism $X \to \X$ such that, for all compact Hausdorff spaces $T$, the induced map $$T \times_{\X} X \to T$$ admits local sections (i.e. is an epimorphism),
\item[iv)] there exists a topological space $X$ and a morphism $X \to \X$ such that, for all locally compact Hausdorff spaces $T$, the induced map $$T \times_{\X} X \to T$$ admits local sections (i.e. is an epimorphism),
\item[v)] there exists a topological stack $\bar \X$ and a map $q:\bar \X \to \X$ such that, for all locally compact Hausdorff spaces $T,$ $$q\left(T\right):\bar \X\left(T\right) \to \X\left(T\right)$$ is an equivalence of groupoids.
\end{itemize}
\end{prop}

\begin{proof}
Suppose that condition $i)$ is satisfied. Note that condition $ii)$ is equivalent to saying that there exists a $\cgt$-covering morphism $X \to \X$ from a topological space (See Definition \ref{dfn:covmor}.) Let $$p:X \to a_{\cgt}\left(\X\right)$$ be a locally compact Hausdorff atlas for the compactly generated stack  $a_{\cgt}\left(\X\right).$ Then it factors (up to equivalence) as $$X \stackrel{x}{\longrightarrow} \X \to a_{\cgt}\left(\X\right),$$ for some map $$x:X \to \X.$$ Note that the $\cgt$-stackification of $x$ is (equivalent to) $p,$ hence is an epimorphism in $\St_{\cgt}\left(\cgh\right).$ This implies $x$ is a $\cgt$-covering morphism. So $i) \Rightarrow ii).$

Since any $\cgt$-cover of a locally compact Hausdorff space can be refined by an open one, $ii) \left(\Rightarrow iv)\right) \Rightarrow iii).$

From Corollary \ref{cor:eql2tps}, it follows that there is an equivalence of bicategories $$\Lambda:\St\left(\ch\right) \to \St_{\cgt}\left(\cgh\right),$$ such that for every stack $\X$ on $\cgh$ with respect to the open cover topology, $$\Lambda\left(j^*\X\right) \simeq a_{\cgt}\left(\X\right).$$
Hence $iii) \left(\Rightarrow ii)\right) \Rightarrow iv).$

Suppose $iv)$ holds. Then $iv) \Rightarrow iii)$ trivially, and $iii) \Rightarrow ii)$ by the above argument. So there exists a $\cgt$-covering map $X \to \X.$ Hence, the induced map $X \to a_{\cgt}\left(\X\right)$ is an epimorphism (in particular, this implies $i)).$ Consider the induced map $$\alpha:\left[X \times_{\X} X \rightrightarrows X\right] \to \X.$$ It is a monomorphism, and since $X \to \X$ is a $\cgt$-covering morphism, $\alpha$ is too. Since stackification is left-exact, it preserves monomorphisms and hence $a_{\cgt}\left(\alpha\right)$ is an equivalence between the compactly generated stack $$\left[X \times_{\X} X \rightrightarrows X\right]_{\cgt}$$ and $a_{\cgt}\left(\X\right).$ Proposition \ref{prop:eqparat} implies that $\alpha$ satisfies $v).$ Hence, $iv) \Rightarrow v).$

Suppose that $v)$ holds for a morphism $q:\bar \X \to \X$ from a topological stack. Then $$j^*\left(\bar\X\right) \to j^*\X$$ is an equivalence. Hence, $a_{\cgt}\left(q\right)$ is an equivalence between $a_{\cgt}\left(\X\right)$ and the compactly generated stack $a_{\cgt}\left(\bar\X\right).$ Hence $v) \Rightarrow i).$
\end{proof}

\begin{dfn}\index{quasi-topological stack}
A stack $\X$ over compactly generated Hausdorff spaces (with respect to the open cover topology) whose diagonal  $$\Delta:\X \to \X \times \X$$ is representable, is \textbf{quasi-topological} if any of the equivalent conditions of Proposition \ref{prop:qausitopol} hold. Denote the full sub-bicategory of $\St\left(\cgh\right)$ consisting of the quasi-topological stacks by $\QCtst.$
\end{dfn}

The following proposition is immediate.
\begin{prop}
If $\X$ is a stack over compactly generated Hausdorff spaces which is topological, paratopological, or compactly generated, then it is quasi-topological.
\end{prop}



\begin{lem}\label{lem:pbklk}
Let $\X$ be a quasi-topological stack, and let $$h:T \to a_{\cgt}\left(\X\right)$$ be a map from a locally compact Hausdorff space. Then $T \times_{a_{\cgt}\left(\X\right)} \X$ is a topological space and the induced map $$T \times_{a_{\cgt}\left(\X\right)} \X \to T$$ is a homeomorphism.
\end{lem}
\begin{proof}
Let $X \to a_{\cgt}\left(\X\right)$ be a locally compact Hausdorff $\cgt$-atlas. Then it factors (up to equivalence) as $$X \stackrel{x}{\longrightarrow} \X \stackrel{\varsigma_{\X}}{\longrightarrow} a_{\cgt}\left(\X\right).$$
Moreover, as $T$ is locally compact Hausdorff, there  is a $2$-commutative lift $h'$
$$\xymatrix{ & \X \ar[d]^-{\varsigma_\X}\\T \ar[r]_-{h} \ar@{..>}[ru]^-{h'} & a_{\cgt}\left(\X\right).}$$
Consider the weak  pullback
$$\xymatrix{P \ar[d] \ar[r] & X \ar[d]^-{x}\\
T \ar[r]^-{h'} & \X.}$$
As a sheaf, $P$ assigns each space $Z,$ the set of triples $\left(f,g,\alpha\right)$ with $$f:Z \to T,$$ $$g:Z \to X,$$ and $$\alpha:xf \Rightarrow h'g.$$ Since the diagonal of $\X$ is representable, $P$ is represented by a compactly generated Hausdorff space.

Consider now the weak pullback  diagram
$$\xymatrix{P' \ar[d] \ar[r] & X \ar[d]^-{\varsigma_{\X}\circ x}\\
T \ar[r]^-{\varsigma_{\X} \circ h'} & a_{\cgt}\left(\X\right).}$$
The sheaf $P'$ is again representable, and it assigns each space $Z$ the set of triples $\left(f,g,\beta\right)$ with $$f:Z \to T,$$ $$g:Z \to X,$$ and $$\beta:\varsigma_{\X}\circ xf \Rightarrow \varsigma_{\X}\circ h'g.$$
Consider the induced map $P \to P'$ given by composition with $\varsigma_{\X}.$ Since for every locally compact Hausdorff space $S,$ $\varsigma_{\X}\left(S\right)$ is an equivalence of groupoids, it follows that the induced map $y_{\ch}\left(P\right) \to y_{\ch}\left(P'\right)$ is an isomorphism, where $$\yk:\cg \to \Skp$$
is the functor which assigns a compactly generated Hausdorff space $X$ the presheaf $S \mapsto \Hom_{\cg}\left(S,X\right).$ Since $y_{\ch}$ is fully-faithful, it follows that the induced map $P \to P'$ is an isomorphism.

Finally, regard the following $2$-commutative diagram:
$$\xymatrix{P' \ar[r] \ar[d] & X \ar[d]\\
T \ar[r]^-{h'} \ar[d]_{id_T} & \X \ar[d]^-{\varsigma_{\X}}\\
T \ar[r]^-{h} & a_{\cgt}\left(\X\right).}$$
The outer square is Cartesian, and so is the upper-square. It follows that $$\xymatrix{T \ar[r]^-{h'} \ar[d]_{id_T} & \X \ar[d]^-{\varsigma_{\X}}\\
T \ar[r]^-{h} & a_{\cgt}\left(\X\right)}$$ is Cartesian as well.
\end{proof}

\begin{cor}\label{cor:funhty}
For every quasi-topological stack $\X,$ there exists a representable universal weak equivalence $$\Theta\left(\X\right) \to \X,$$ from a topological space $\Theta\left(\X\right).$
\end{cor}

\begin{proof}
Let $\X$ be a quasi-topological stack, and let $X \to a_{\cgt}\left(\X\right)$ be a locally compact Hausdorff $\cgt$-atlas. Then it factors (up to equivalence) as $$X \stackrel{x}{\longrightarrow} \X \stackrel{\varsigma_{\X}}{\longrightarrow} a_{\cgt}\left(\X\right).$$ Denote by $\G$ the topological groupoid $$X \times_{\X} X \rightrightarrows X.$$ There is a canonical map $\left[\G\right] \to \X$ and the unit map $\varsigma_{\left[\G\right]}$ factors as $$\left[\G\right] \to \X \stackrel{\varsigma_{\X}}{\longrightarrow} a_{\cgt}\left(\X\right).$$ The composite $$\left\|\G\right\| \to \left[\G\right] \to \X \stackrel{\varsigma_{\X}}{\longrightarrow} a_{\cgt}\left(\X\right)$$ is a representable quasi-shrinkable morphism, by Theorem \ref{thm:atlslcsh}. From Lemma \ref{lem:pbklk}, it follows that $$\left\|\G\right\| \to \left[\G\right] \to \X$$ is a representable quasi-shrinkable morphism as well, and in particular, a representable universal weak equivalence, by Corollary \ref{cor:tgood}.
\end{proof}

\begin{thm}
There exists a functor $\Omega:\QCtst \to \operatorname{Ho}\left(\T\right)$ assigning to each quasi-topological stack $\X,$ a weak homotopy type. Moreover, for each $\X,$ there is a representable universal weak equivalence $$X \to \X,$$ from a space $X$ whose homotopy type is $\Omega\left(\X\right).$ Furthermore, we can arrange for the functor $\Omega$ to restrict to the one of Theorem \ref{thm:functorhomtype} on compactly generated stacks.
\end{thm}

\begin{proof}
Using the notation of Lemma \ref{lem:Noohitheta}, let $\C=\QCtst,$\\ $B=\cgh,$ and let $R$ be the class of universal weak equivalences. Using the notation of the proof of Corollary \ref{cor:funhty}, for each quasi-topological stack $\X,$ let $$\varphi\left(\X\right):\Theta\left(\X\right)=\left\|\G\right\| \to \left[\G\right] \to \X.$$ The rest is identical to the proof of Theorem \ref{thm:functorhomtype}.
\end{proof}

\begin{rmk}
This agrees with the functorial construction of a weak homotopy type of a topological or paratopological stack given in \cite{NoohiH}, by construction.
\end{rmk}

\begin{dfn}
A morphism $f:\X \to \Y$ in $\QCtst$ is a \textbf{weak homotopy equivalence} if $\Omega\left(f\right)$ is an isomorphism.
\end{dfn}

\begin{thm}
Let $\X$ be a quasi-topological stack. Then the unit map $$\varsigma_{\X}:\X \to a_\cgt \left(\X\right)$$ induces an equivalence of groupoids
$$\X\left(Y\right) \to a_\cgt\X\left(Y\right)$$
for all locally compact Hausdorff spaces $Y$, and $\varsigma_\X$ is a weak homotopy equivalence.
\end{thm}

\begin{proof}
The first statement follows immediately from Proposition \ref{prop:eqparat}. For the second, letting $R$ denote the class of universal weak equivalences, we can factor $\Omega$ as
$$\QCtst \to R^{-1} \QCtst \stackrel{\Theta}{\longrightarrow} R^{-1}\cgh \to \operatorname{Ho}\left(\T\right).$$ To show $\Omega\left(\varsigma_\X\right)$ is an isomorphism, it suffices to show $\Theta\left(\varsigma_\X\right)$ is. From \cite{NoohiH}, an arrow between two spaces $X$ and $Y$ in $R^{-1}\cgh$ is a span $\left(r,g\right)$ of the form
$$\xymatrix{ & T \ar[ld]_-{r} \ar[rd]^-{g} & \\
X & & Y,}$$ with $r$ a universal weak equivalence. Moreover, if $f:\X \to \Y$ is a morphism of quasi-topological stacks, $\Theta\left(f\right)$ is given by the span $\left(w,f'\right)$ provided by the diagram
$$\xymatrix{\Theta\left(\X\right) \times_{\Y} \Theta\left(\Y\right) \ar[d]_-{w} \ar[rr]^-{f'} & & \Theta\left(\Y\right) \ar[d]^-{\varphi\left(\Y\right)} \\
\Theta\left(\X\right) \ar[r]^-{\varphi\left(\X\right)} & \X \ar[r]^-{f} & \Y.}$$
Such a span is an isomorphism if and only if $f'$ is a universal weak equivalence. It follows that $\Theta\left(\varsigma_\X\right)$ is given by the span defined by the diagram
$$\xymatrix{\Theta\left(\X\right) \times_{\Y} \Theta\left(a_\cgt \left(\X\right)\right) \ar[d]_-{w} \ar[rr]^-{f'} & & \Theta\left(a_\cgt \left(\X\right)\right) \ar[d]^-{\varphi\left(a_\cgt \left(\X\right)\right)} \\
\Theta\left(\X\right) \ar[r]^-{\varphi\left(\X\right)} & \X \ar[r]^-{\varsigma_\X} & a_\cgt \left(\X\right).}$$
Notice that $$\Theta\left(\X\right) \stackrel{\varphi\left(\X\right)}{-\!\!\!-\!\!\!-\!\!\!-\!\!\!-\!\!\!\longrightarrow} \X \stackrel{\varsigma_\X}{-\!\!\!-\!\!\!-\!\!\!-\!\!\!-\!\!\!\longrightarrow} a_\cgt \left(\X\right)$$ is a representable universal weak equivalence. It follows that $f'$ is as well, so we are done.
\end{proof}

\begin{cor}\label{prop:comp1}
Let $\X$ be a topological or paratopological stack. Then the unit map $$\varsigma_{\X}:\X \to a_\cgt \left(\X\right)$$ induces an equivalence of groupoids
$$\X\left(Y\right) \to a_\cgt\X\left(Y\right)$$
for all locally compact Hausdorff spaces $Y$. Moreover, $\varsigma_\X$ is a weak homotopy equivalence.
\end{cor}

In particular, to any topological stack, there is a canonically associated compactly generated stack of the same weak homotopy type which restricts to the same stack over locally compact Hausdorff spaces. Conversely, if $$\Y \simeq \left[\h\right]_\cgt$$ is a compactly generated stack, $\left[\h\right]$ is an associated topological stack for which the same is true.

\begin{thm}\index{mapping stack}
Let $\X$ and $\Y$ be topological stacks such that $\Y$ admits a locally compact Hausdorff atlas. Then $\Map\left(\Y,\X\right)$ is a paratopological stack, $\Map\left(a_\cgt\left(\Y\right),a_\cgt\left(\X\right)\right)$ is a compactly generated stack, and there is a canonical weak homotopy equivalence
$$\Map\left(\Y,\X \right) \to \Map\left(a_\cgt\left(\Y\right),a\left(\X\right)\right).$$
Moreover, $\Map\left(\Y,\X \right)$ and $\Map\left(a_\cgt\left(\Y\right),a_\cgt\left(\X\right)\right)$ restrict to the same stack over locally compact Hausdorff spaces.
\label{thm:hommap}
\end{thm}

\begin{proof}
The fact that $\Map\left(\Y,\X\right)$ is a paratopological stack follows from Theorem \ref{thm:cw}, and that $\Map\left(a_\cgt\left(\Y\right),a_\cgt\left(\X\right)\right)$ is a compactly generated stack follows from Theorem \ref{thm:carclos}. To prove the rest, it suffices to prove that $$a_{\cgt}\left(\Map\left(\X,\Y\right)\right)\simeq \Map\left(a_\cgt\left(\X\right),a_{\cgt}\left(\Y\right)\right).$$ For this, it is  enough to show that they restrict to the same stack over compact Hausdorff spaces. Let $T$ be a compact Hausdorff space. Then $$a_{\cgt}\left(\Map\left(\X,\Y\right)\right)$$ assigns $T$ the groupoid $$\Hom\left(T \times \X,\Y\right),$$ since it agrees with $\Map\left(\X,\Y\right)$ along locally compact Hausdorff spaces. From Corollary \ref{cor:tiredxx}, since $T \times \X$ admits a locally compact atlas, this is in turn equivalent to the groupoid $$\Hom\left(a_{\cgt}\left(T \times \X\right),a_{\cgt}\left(\Y\right)\right) \simeq \Map\left(a_\cgt\left(\X\right),a_{\cgt}\left(\Y\right)\right)\left(T\right).$$
\end{proof}

We end this paper by showing exactly in what sense compactly generated stacks are to topological stacks what compactly generated spaces are to topological spaces:

Recall that there is an adjunction

$$\xymatrix@C=1.5cm{\cgr \ar@<-0.5ex>[r]_-{v} & \T \ar@<-0.5ex>[l]_-{k},}$$
exhibiting compactly generated spaces as a co-reflective subcategory of the category of topological spaces, and for any space $X,$ the co-reflector $$vk\left(X\right) \to X$$ is a weak homotopy equivalence.

We now present the $2$-categorical analogue of this statement:

\begin{thm}\label{thm:kellyref}
There is a $2$-adjunction
$$\xymatrix@C=1.5cm{\Ctst \ar@<-0.5ex>[r]_-{v} & \mathfrak{\Tst} \ar@<-0.5ex>[l]_-{k},}$$
$$v \rt \mspace{2mu} k,$$
exhibiting compactly generated stacks as a co-reflective sub-bicategory of topological stacks, and for any topological stack $\X$, the co-reflector  $$vk\left(\X\right) \to \X$$ is a weak homotopy equivalence. A topological stack is in the essential image of the $2$-functor $$v:\Ctst \to \Tst$$ if and only if it admits a locally compact Hausdorff atlas.
\end{thm}
\begin{proof}
Let us first start with the $2$-functor $$k:\Tst \to \Ctst.$$ We define it to be the restriction of the stackification $2$-functor $$a_{\cgt}:Gpd^{\cgh^{op}} \to St_{\cgt}\left(\cgh\right)$$ to $\Tst.$ Note that since every open cover is a $\cgt$-cover, for all topological groupoids $\G$ and $\h,$ there is a canonical full and faithful functor $$Bun_\h\left(\G\right) \to Bun^{\cgt}_\h\left(\G\right).$$ In fact, it is literally an inclusion on the level of objects. These assemble into a homomorphism of bicategories $$\tilde k:Bun\cgh Gpd \to Bun^{\cgt}\cgh Gpd$$ which is the identify on objects, $1$-morphisms, and $2$-morphisms (but it is not $2$-categorically full and faithful). Composing with the canonical equivalences,

$$\Tst \stackrel{\sim}{\longrightarrow} Bun\cgh Gpd \stackrel{\tilde k}{\longrightarrow} Bun^{\cgt}\cgh Gpd \stackrel{\sim}{\longrightarrow} \Ctst$$
is a factorization (up to equivalence) of $a_{\cgt}|_{\Tst}.$
We will construct a left $2$-adjoint to $\tilde k$. For all topological groupoids $\G,$ let $\tilde v\left(\G\right)$ be the \v{C}ech groupoid $\G_{\K}$ of $\G$ with respect to the $\cgt$-cover of $\G_0$ by all its compact subsets. In particular, $\tilde v\left(\G\right)$ has a locally compact Hausdorff object space. The canonical map $\G_{\K} \to \G$ is a $\cgt$-Morita equivalence. Denote the associated $\cgt$-principal $\G$-bundle over $\G_{\K}$ by ${1^k}_\G$. Since $\left(\G_{\K}\right)_0$ is locally compact Hausdorff, ${1^k}_\G$ is an ordinary principal bundle $$1^{k}_{\G} \in Bun_\G\left(\tilde v\left(\G\right)\right)_0.$$ Since $\G_{\K} \to \G$ is a $\cgt$-Morita equivalence, there exists a $\cgt$-principal $\G_\K$-bundle over $\G,$ $r_\G$ and isomorphisms
\begin{eqnarray*}
\alpha_\G:r_\G \otimes {1^k}_\G &\stackrel{\sim}{\longrightarrow}& id_{\G}\\
\beta_\G:id_{\G_\K} &\stackrel{\sim}{\longrightarrow}& {1^k}_\G \otimes r_\G,\\
\end{eqnarray*}
where the tensor product symbol $\otimes$ denotes composition in the bicategory of principal bundles.
The assignment $\G \mapsto \tilde v\left(\G\right)=\G_\K$ extends to a homomorphism of bicategories $$\tilde v:Bun^{\cgt}\cgh Gpd \stackrel{k}{\longrightarrow} Bun\cgh Gpd,$$ by
\begin{eqnarray*}
\tilde v_{\h,\G}:Bun^{\cgt}_\G\left(\h\right)_0 &\to& Bun_{\tilde v\left(\G\right)}\left(\tilde v\left(\h\right)\right)\\
P &\mapsto& r_\h \otimes P \otimes {1^k}_\G,\\
\end{eqnarray*}
and similarly on $2$-cells.
Note that for $\G$ and $\h$ topological groupoids, there is a natural equivalence of groupoids
\begin{eqnarray*}
\Hom\left(\tilde v\left(\G\right),\h\right) &=& Bun_{\h}\left(\G_\K\right)\\
&=& Bun^{\cgt}_\h\left(\G_\K\right) \mbox{(since $\G_\K$ has locally compact Hausdorff object space)}\\
&\simeq& Bun^{\cgt}_\h\left(\G\right) \mbox{(since $\G_\K$ is $\cgt$-Morita equivalent to $\G$)}\\
&=& \Hom\left(\G,k\left(\h\right)\right),
\end{eqnarray*}
which sends a principal $\G$-bundle $P$ over $\h_\K$ to $P \otimes r_\G.$
An inverse for this equivalence is given by sending $$P \in Bun^{\cgt}_\G\left(\h\right)$$ to $P \otimes 1^k_\h.$
These equivalences define an adjunction of bicategories, $\tilde v \rt \mspace{2mu} \tilde k.$ The unit of this adjunction is given by $$r_\G:\G \to \G_\K=\tilde k \tilde v\left(\G\right) \mbox{ in $Bun^{\cgt}\cgh Gpd$,}$$ which is an equivalence. It follows that $v$ is bicategorically full and faithful. The co-unit is given by $${1^k}_\G:\tilde v \tilde k\left(\G\right)=\G_\K \to \G  \mbox{ in $Bun\cgh Gpd$.}$$
The induced adjunction $$\xymatrix@C=1.5cm{\Ctst \simeq Bun^{\cgt}\cgh Gpd \ar@<-0.5ex>[r]_-{\tilde v} & GpdBun\cgh\simeq \mathfrak{\Tst}\ar@<-0.5ex>[l]_-{\tilde k},}$$ is the desired $v \rt k.$
The $2$-functor $v$ sends a compactly generated stack equivalent to $\left[\G\right]_{\cgt}$ to $\left[\G_\K\right],$ which has a locally compact Hausdorff atlas. From the general theory of adjunctions, the essential image is precisely the sub-bicategory of topological stacks for over which $k=a_{\cgt}$ restricts to a full and faithful $2$-functor. We have already shown that the essential image is contained in those topological stacks which admit a locally compact Hausdorff atlas. By Corollary \ref{cor:fflk}, $a_{\cgt}$ restricted to this sub-bicategory is full and faithful, hence the essential image of $v$ is topological stacks which admit a locally compact Hausdorff atlas.

It remains to show that the co-unit is a weak homotopy equivalence. Let $\left[\G\right]$ be a topological stack. Then the co-unit is given by the canonical map $$\varepsilon_{\left[\G\right]}:\left[\G_K\right] \to \left[\G\right].$$ Notice that the following diagram is $2$-commutative:
$$\xymatrix{\left[\G_\K\right] \ar[r]^-{\varepsilon_{\left[\G\right]}}
\ar[rd]_-{\varsigma_{\left[\G_\K\right]}} & \left[\G\right] \ar[d]^-{\varsigma_{\left[\G\right]}} \\
& a_{\cgt}\left(\left[\G\right]\right).}$$
By Corollary \ref{prop:comp1}, the maps $\varsigma_{\left[\G\right]}$ and $\varsigma_{\left[\G_\K\right]}$ are weak homotopy equivalences. It follows that so is $\varepsilon_{\left[\G\right]}$.
\end{proof}

\begin{alphasection}
\section{Stacks}
\label{A:1}

Let $\left(\C,J\right)$ be a small Grothendieck site and let $Gpd$ denote the bicategory of groupoids\footnote{Technically, we mean those groupoids which are equivalent to a small category.}. Denote by $\pst$ the bicategory of weak presheaves $\C^{op} \to Gpd$. That is, weak contravariant $2$-functors from $\C$ to $Gpd$. We call this the bicategory of \textbf{weak presheaves} on $\C$. There exists a canonical inclusion $$\left(\mspace{2mu} \cdot \mspace{2mu}\right)^{id}:\ps \to \pst,$$ where, each presheaf $F$ is sent to the weak presheaf which assigns to each object $C$ the category $\left(F(C)\right)^{id}$ whose objects are $F\left(C\right)$ and whose arrows are all identities. If $C$ is an object of $\C$, we usually denote $\left(y_C\right)^{id}$ simply by $C$.

We also have a version of the Yoneda lemma:

\begin{lem}\cite{FGA}
\textbf{The $2$-Yoneda Lemma:}
If $C$ is an object of $\C$ and $\X$ a weak presheaf, then there is a natural equivalence of groupoids

$$\Hom_{\pst}\left(C,\X\right) \simeq \X\left(C\right).$$
\end{lem}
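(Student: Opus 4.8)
The plan is to imitate the classical Yoneda argument in the $2$-categorical setting: I would exhibit mutually pseudo-inverse functors between the groupoids $\Hom_{\pst}\left(C,\X\right)$ and $\X\left(C\right)$, and then record that the resulting equivalence is natural in $C$ and in $\X$. First I would define the \emph{evaluation} functor $E\colon \Hom_{\pst}\left(C,\X\right)\to\X\left(C\right)$ sending a pseudo-natural transformation $\eta\colon C\to\X$ to the object $\eta_C\left(\mathrm{id}_C\right)\in\X\left(C\right)$, and a modification $m\colon\eta\Rightarrow\eta'$ to the component of $m_C$ at $\mathrm{id}_C$, a morphism $\eta_C\left(\mathrm{id}_C\right)\to\eta'_C\left(\mathrm{id}_C\right)$ in $\X\left(C\right)$. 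In the other direction I would define $\Upsilon\colon\X\left(C\right)\to\Hom_{\pst}\left(C,\X\right)$ by sending $x\in\X\left(C\right)$ to the pseudo-natural transformation $\widehat{x}$ whose component at $D$ is the functor $\widehat{x}_D\colon\Hom_\C\left(D,C\right)\to\X\left(D\right)$, $f\mapsto f^*\left(x\right)$, the pseudo-naturality $2$-cells being supplied by the composition constraints of the weak $2$-functor $\X$; a morphism $\varphi\colon x\to y$ in $\X\left(C\right)$ is sent to the modification with components $f^*\left(\varphi\right)$.

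Next I would check that $E\circ\Upsilon\cong\mathrm{id}_{\X\left(C\right)}$: by construction $E\left(\Upsilon\left(x\right)\right)=\mathrm{id}_C^*\left(x\right)$, and the unit constraint $\mathrm{id}_C^*\cong\mathrm{id}_{\X\left(C\right)}$ of the weak $2$-functor $\X$ provides a natural isomorphism $E\circ\Upsilon\Rightarrow\mathrm{id}$, its naturality being one of the coherence axioms. For the composite $\Upsilon\circ E$, given $\eta\colon C\to\X$ with $x:=\eta_C\left(\mathrm{id}_C\right)$, I would build a modification $\eta\Rightarrow\widehat{x}$ whose component at $D$, evaluated at $f\in\Hom_\C\left(D,C\right)$, is the isomorphism $\eta_D\left(f\right)\cong f^*\left(\eta_C\left(\mathrm{id}_C\right)\right)$ obtained from the pseudo-naturality $2$-cell of $\eta$ along $f\colon D\to C$ applied to $\mathrm{id}_C\in C\left(C\right)$, using that $y_C\left(f\right)$ sends $\mathrm{id}_C$ to $f$. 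The crucial simplification throughout is that the prestack $C=\left(y_C\right)^{id}$ has only identity morphisms in each $C\left(D\right)$, so the pseudo-naturality and modification axioms collapse to easily verifiable identities.

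The main obstacle I expect is purely bookkeeping: verifying that the family of isomorphisms $\eta_D\left(f\right)\cong f^*\left(\eta_C\left(\mathrm{id}_C\right)\right)$ genuinely assembles into a modification, i.e., is compatible with the composition constraints of $\X$ and of $\widehat{x}$, and that both natural isomorphisms $E\circ\Upsilon\Rightarrow\mathrm{id}$ and $\Upsilon\circ E\Rightarrow\mathrm{id}$ respect the relevant coherence data. This amounts to repeated application of the coherence axioms for the weak $2$-functor $\X$ and for pseudo-natural transformations; no genuine difficulty arises, only diagram-chasing. Finally, I would record naturality of the equivalence: for a morphism $g\colon C'\to C$ in $\C$ and a morphism $\X\to\X'$ of prestacks, the two composites around the corresponding square of groupoids agree up to a canonical natural isomorphism, again by the coherence axioms. (Since the statement is attributed to \cite{FGA}, one may alternatively simply cite that reference for the details.)
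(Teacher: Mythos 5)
Your proposal is correct: it is the standard bicategorical Yoneda argument (evaluation at $\mathrm{id}_C$, with pseudo-inverse $x\mapsto\widehat{x}$, the coherence constraints of the weak 2-functor $\X$ supplying the required natural isomorphisms), and the simplification you note from $C=\left(y_C\right)^{id}$ having only identity arrows is exactly what makes the verification routine. The paper itself gives no proof, citing \cite{FGA} instead, and your write-up is essentially the argument found there, so there is nothing of substance to compare beyond that.
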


\begin{rmk}
The bicategory $\pst$ is both complete and cocomplete; weak limits are computed ``pointwise'':

$$\left(\hl \X_i\right)\left(X\right)= \hl \X_i\left(X\right)$$
where the weak limit to the right is computed in $Gpd$. Similarly for weak colimits.
\end{rmk}

\begin{dfn}\label{dfn:stack}
A weak presheaf $\X$ is called a \textbf{stack} if for every object $C$ and covering sieve $S$, the natural map

$$ \Hom_{\pst}\left(C,\X\right) \to \Hom_{\pst}\left(S,\X\right)$$
is an equivalence of groupoids.

If this map is fully faithful, $\X$ is called \textbf{separated} (or a \textbf{prestack}). Although it is not standard, if this map is faithful, we will call it \textbf{weakly separated}.

\end{dfn}
We denote the full sub-bicategory of $\pst$ consisting of those weak presheafs that are stacks by $St_J\left(\C\right)$.

It is immediate from the definition that the weak limit of any small diagram of stacks is again a stack.

If $\mathcal{B}$ is a basis for the topology $J$, then it suffices to check this condition for every sieve of the form $S_{\mathcal{U}}$, where $\mathcal{U}$ is a covering family. Namely, a weak presheaf is a stack if and only if for every covering family $\mathcal{U}=\left(U_i \to C\right)_i$ the induced map

\begin{equation*}
\X(C) \to \hl \left[ {\prod {\X\left(U_i\right)}} \rrarrow {\prod {\X\left(U_{ij}\right)}} \rrrarrow {\prod {\X\left(U_{ijk}\right)}}\right]
\end{equation*}
is an equivalence of groupoids.

If this map is fully faithful, $\X$ is is separated and if this map is faithful, it is weakly separated.

The associated groupoid

$$\mathfrak{Des}\left(\X,\mathcal{U}\right):= \hl \left[ {\prod {\X\left(U_i\right)}} \rrarrow {\prod {\X\left(U_{ij}\right)}} \rrrarrow {\prod {\X\left(U_{ijk}\right)}}\right],$$
obtained as weak limit of the above diagram of groupoids, is called the category of \textbf{descent data} for $\X$ at $\mathcal{U}$.

\begin{prop}
If $J$ is subcanonical and $\left(U_i \to C\right)_i$ is a covering family for an object $C$, then, \textbf{in the bicategory of stacks}

\begin{equation*}
C \simeq \hc \left[{\coprod{U_{ijk}}} \rrrarrow {\coprod{U_{ij}}} \rrarrow {\coprod{U_{i}}}\right]
\end{equation*}
\label{covlim}
\end{prop}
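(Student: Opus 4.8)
The plan is to show that the truncated simplicial object on the right has, in the $2$-category $St_J\left(\C\right)$, exactly the universal property enjoyed by the representable $C$, and then conclude by the $2$-Yoneda Lemma. Write $\check C_\bullet$ for the diagram $\coprod U_{ijk}\rrrarrow\coprod U_{ij}\rrarrow\coprod U_i$, the face maps being induced by the projections of the fibre products $U_{ij}=U_i\times_C U_j$ and $U_{ijk}=U_i\times_C U_j\times_C U_k$, and set $\mathcal C_\bullet:=\hc\check C_\bullet$, the weak colimit formed \emph{in} $St_J\left(\C\right)$. Since $J$ is subcanonical, each $U_i$ and $C$ genuinely lies in $St_J\left(\C\right)$, so this makes sense; moreover the composites $U_{ij}\to U_i\to C$ and $U_{ij}\to U_j\to C$ literally coincide, so the maps $U_i\to C$ assemble into a $2$-cocone under $\check C_\bullet$, and the universal property of the colimit produces a canonical comparison morphism $\theta:\mathcal C_\bullet\longrightarrow C$. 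It then suffices to prove that $\theta$ induces, for every stack $\X$, an equivalence of groupoids $\theta^{*}:\Hom_{St_J\left(\C\right)}\left(C,\X\right)\to\Hom_{St_J\left(\C\right)}\left(\mathcal C_\bullet,\X\right)$, natural in $\X$; by the Yoneda argument in the bicategory $St_J\left(\C\right)$ this forces $\theta$ to be an equivalence.

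Next I would identify the two hom-groupoids. On one side, the $2$-Yoneda Lemma gives $\Hom_{St_J\left(\C\right)}\left(C,\X\right)\cong\X\left(C\right)$. On the other side, recall that the inclusion $St_J\left(\C\right)\hookrightarrow\pst$ is fully faithful with a left $2$-adjoint (stackification), so a weak colimit in $St_J\left(\C\right)$ is the stackification of the corresponding (pointwise) weak colimit in $\pst$, and mapping such a stackification into a stack agrees with mapping the underlying prestack colimit into that stack. Since $\pst$ is cocomplete and mapping out of a weak colimit computes the weak limit of the mapping-out groupoids, this yields
$$\Hom_{St_J\left(\C\right)}\left(\mathcal C_\bullet,\X\right)\cong\hl\left[\Hom_{\pst}\left(\coprod U_i,\X\right)\rrarrow\Hom_{\pst}\left(\coprod U_{ij},\X\right)\rrrarrow\Hom_{\pst}\left(\coprod U_{ijk},\X\right)\right].$$
Using $\Hom_{\pst}\left(\coprod_\alpha A_\alpha,\X\right)\cong\prod_\alpha\Hom_{\pst}\left(A_\alpha,\X\right)$ together with $2$-Yoneda applied to each representable summand, the right-hand side becomes $\hl\left[\prod\X\left(U_i\right)\rrarrow\prod\X\left(U_{ij}\right)\rrrarrow\prod\X\left(U_{ijk}\right)\right]$, which is precisely the descent groupoid $\mathfrak{Des}\left(\X,\mathcal{U}\right)$ of Appendix \ref{A:1}.

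Finally, chasing through these identifications one checks that $\theta^{*}$ corresponds exactly to the canonical comparison map $\X\left(C\right)\to\mathfrak{Des}\left(\X,\mathcal{U}\right)$ sending a section to its restrictions along $\mathcal{U}=\left(U_i\to C\right)_i$; since $\X$ is a stack and $\mathcal{U}$ is a covering family, this is an equivalence of groupoids by the stack condition in the form recorded after Definition \ref{dfn:stack}. All the identifications above are natural in $\X$, so $\theta^{*}$ is an equivalence for every stack $\X$, and Yoneda gives $C\cong\hc\check C_\bullet$ in $St_J\left(\C\right)$. I expect the only genuinely delicate point to be the middle paragraph: justifying carefully that weak colimits of stacks are ``stackified pointwise prestack colimits'' and that this stackification is invisible to morphisms into a stack, and then verifying that the abstract map $\theta^{*}$ coincides \emph{on the nose} with the descent comparison (using that $\X$ is groupoid-valued, hence $1$-truncated, so that the $2$-truncated diagram $\check C_\bullet$ already carries all the descent information) rather than merely being equivalent to it.
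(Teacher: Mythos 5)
Your argument is correct: reducing to maps into an arbitrary stack $\X$, using the adjunction $St_J \rightt i$ (so that mapping the stackified colimit into a stack is mapping the prestack colimit), identifying the resulting weak limit with $\mathfrak{Des}\left(\X,\mathcal{U}\right)$ via 2-Yoneda, invoking the descent form of the stack condition, and concluding by bicategorical Yoneda is exactly the standard proof, and it uses only machinery the paper itself sets up (the 2-Yoneda Lemma, the stackification adjunction, and the descent-data reformulation of Definition \ref{dfn:stack}); note also that subcanonicity enters precisely to guarantee that $C$ and the $U_i$ are themselves objects of $St_J\left(\C\right)$. The paper records Proposition \ref{covlim} in its review appendix without proof, so there is no competing argument to compare against; your write-up simply supplies the expected details, and the only conventions you implicitly share with the paper are the existence (or presheaf-level interpretation) of the fibre products $U_{ij}$, $U_{ijk}$ and the fact that the descent condition may be checked on covering families generating covering sieves.
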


We will often simply write $$C \simeq \underset{U_i \rightarrow C} \hc U_i.$$

\begin{dfn}\label{dfn:repr}
A morphism $\varphi:\X \to \Y$ of stacks is called \textbf{representable} if for any object $C \in \C$ and any morphism $C \to \Y$, the weak pullback $C \times_{\Y} \X$ in the category of stacks is (equivalent to) an object $D$ of $\C$.
\end{dfn}

We can now define a $2$-functor $$\left(\mspace{2mu} \cdot \mspace{2mu}\right)^{+}:\pst \to \pst$$ by

\begin{equation*}
\X^{+}\left(C\right):=\underset{S\in Cov(C)} \hc \Hom_{\pst}\left(S,F\right).
\end{equation*}

We can alternatively define $\left(\mspace{2mu} \cdot \mspace{2mu}\right)^{+}$ by the equation

\begin{equation*}
\X^{+}\left(C\right):=\underset{\mathcal{U} \in cov(C)} \hc \mathfrak{Des}\left(\X,\mathcal{U}\right)
\end{equation*}

and obtain a naturally equivalent $2$-functor.

\begin{rmk}
The weak colimit in either definition must be indexed over a suitable bicategory of covers.
\end{rmk}

$\X$ is separated if and only if the canonical map $\X \to \X^+$ is a fully faithful, and weakly separated if and only if it is faithful. $\X$ is a stack if and only if this map is an equivalence.

If $\X$ is separated, then $\X^+$ is a stack, and if $\X$ is only weakly separated, then $\X^+$ is separated. Furthermore, $\X^+$ is always weakly separated, for any $\X$. Hence, $\X^{+++}$ is always a stack.

\begin{dfn}\label{dfn:stackification}
We denote by $a_J$ the $2$-functor $\X \mapsto \X^{+++}$. It is called the \textbf{stackification} $2$-functor \index{stackification}.
\end{dfn}
 If $\X$ is separated, $a_J\left(\X\right) \simeq \X^+$, and if $\X$ is a stack, $a_J\left(\X\right) \simeq \X$. The $2$-functor $a_J$ is left $2$-adjoint to the inclusion $$i:St_J\left(\C\right) \hookrightarrow \pst$$ and preserves finite weak limits.

\begin{rmk}
$St_J\left(\C\right)$ is both complete and cocomplete. Since $i$ is a right adjoint, it follows that the computation of weak limits of stacks can be done in the category $\pst$, hence can be done ``pointwise''. To compute the weak colimit of a diagram of stacks, one must first compute it in $\pst$ and then apply the stackification functor $a_J$.
\end{rmk}

We end by remarking that the bicategory $St_J\left(\C\right)$ is Cartesian closed. The exponent $\X^{\Y}$ of two stacks is given by

\begin{equation*}
\X^{\Y}\left(C\right)=\Hom_{\pst}\left(\Y \times C, \X\right),
\end{equation*}
and satisfies
\begin{equation*}
\Hom_{\pst}\left(\Z,\X^{\Y}\right) \simeq \Hom_{\pst}\left(\Y \times \Z, \X\right)
\end{equation*}
for all stacks $\Z$.

\label{B:1}

\section{Topological Groupoids and Topological Stacks}\label{sec:topgpd}
\subsection{Definitions and Some Examples}
This section is a review of topological groupoids and topological stacks. The material is entirely standard aside from the fact that some of the standard results are stated for more general Grothendieck topologies.

\begin{dfn}
A \textbf{topological groupoid} is a groupoid object in $\T,$ the category of topological spaces. Explicitly, it is a diagram

$$\xymatrix{ {\G_1 \times _{\G_0}\G_1} \ar[r]^(0.6){m} & \G_1 \ar@<+.7ex>^s[r]\ar@<-.7ex>_t[r] \ar@(ur,ul)^i & \G_0 \ar@/^1.65pc/^u [l] }$$
of topological spaces and continuous maps satisfying the usual axioms. Forgetting the topological structure (i.e. applying the forgetful functor from $\T$ to $\Set$), one obtains an ordinary (small) groupoid.
\end{dfn}

Topological groupoids form a bicategory with continuous functors as $1$-morphisms and continuous natural transformations as $2$-morphisms. We denote the bicategory of topological groupoids by $\T Gpd$.

\begin{rmk}
$\T Gpd$ has weak fibered products and arbitrary products. Since it is a $\left(2,1\right)$-category, by \cite{htt} it follows that it is a complete bicategory.
\end{rmk}


\begin{dfn}
Given a topological space $X$, we denote by $\left(X\right)^{id}$ the topological groupoid whose object and arrow space are both $X$ and all of whose structure maps are the identity morphism of $X$. The arrow space is the collection of all the identity arrows for the objects $X$.
\end{dfn}

\begin{dfn}
Given a topological space $X$, the \textbf{pair groupoid} $Pair\left(X\right)$ is the topological groupoid whose object space is $X$ and whose arrow space is $X \times X$, where an element $$\left(x,y\right) \in X \times X$$ is viewed as an arrow from $y$ to $x$ and composition is defined by the rule $$\left(x,y\right) \cdot \left(y,z\right)=\left(x,z\right).$$
\end{dfn}

\begin{dfn}
Given a continuous map $\phi:U \to X$, the \textbf{relative pair groupoid} $Pair\left(\phi\right)$ is defined to be the topological groupoid whose arrow space is the fibered product $U \times_{X} U$ and object space is $U$, where an element

$$\left(x,y\right) \in U \times_{X} U \subset U \times U$$ is viewed as an arrow from $y$ to $x$ and composition is defined by the rule $$\left(x,y\right) \cdot \left(y,z\right)=\left(x,z\right).$$  The pair groupoid of a space $X$ is the relative pair groupoid of the unique map from $X$ to the one-point space.
\end{dfn}

\begin{dfn}\label{dfn:cech}
Given a topological groupoid $\G$ and a continuous map $f:X \to \G_0$, there is an \textbf{induced groupoid} $f^*\left(\G\right)$, which is a topological groupoid whose object space is $X$ such that arrows between $x$ and $y$ in $f^*\left(\G\right)$ are in bijection with arrows between $f(x)$ and $f(y)$ in $\G$. When $X=\coprod\limits_\alpha  {U_\alpha  }$ with $\mathcal{U}=\left(U_\alpha \hookrightarrow X\right)_\alpha$ an open cover of $\G_0$ and $X \to \G_0$ the canonical map, $f^*\left(\G\right)$ is denoted by $\G_{\mathcal{U}}$. If in addition to this, $\G=\left(T\right)^{id}$ for some topological space $T$, then this is called the \textbf{\v{C}ech groupoid} associated to the cover $\mathcal{U}$ of $T$ and is denoted by $T_{\mathcal{U}}$.
\end{dfn}

\begin{rmk}
If the open cover $\mathcal{U}$ is instead a cover for a different Grothendieck topology on $\T$, the above still makes sense. This will be important later.
\end{rmk}

\begin{rmk}
A \v{C}ech groupoid is just a pair groupoid, for a map of the form $\coprod\limits_\alpha \to X.$
\end{rmk}

\subsection{Principal Bundles}

Principal bundles for topological groups, and more generally for topological groupoids, are classical objects of study. However, principal bundles (and many other objects involving a local triviality condition) should not be thought of as objects associated to the \emph{category} $\T$, but rather as objects associated to the \emph{Grothendieck site} $\left(\T,\mathcal{O}\right)$, where $\mathcal{O}$ is the \textbf{open cover topology} on $\T$. This Grothendieck topology is defined by declaring a family of maps $\left(O_\alpha \to X\right)_\alpha$ to be a covering family if and only if it constitutes an open cover of $X$. The concept of principal bundles generalizes to other Grothendieck topologies on $\T$ and we will need this generality later when we introduce the compactly generated Grothendieck topology. For the remainder of this subsection, let $J$ be an arbitrary subcanonical Grothendieck topology on $\T$.

\begin{dfn}
Given a topological groupoid $\G$, a (left) \textbf{$\G$-space} is a space $E$ equipped with a \textbf{moment map} $\mu:E \to \G_0$ and an \textbf{action map} $\rho:\G_1 \times_{\G_0} E \to E$,
where
$$\xymatrix{\G_1 \times_{\G_0} E \ar[r]  \ar[d] & E \ar^-{\mu}[d] \\
\G_1 \ar^-{s}[r] & \G_0\\}$$
is the fibered product, such that the following conditions hold:

\begin{itemize}
\item[i)] $\left(gh\right) \cdot e = g \cdot \left(h \cdot e\right)$ whenever $e$ is an element of $E$ and $g$ and $h$ elements of $\G_1$ with domains such that the composition makes sense
\item[ii)] $1_{\mu\left(e\right)} \cdot e =e$ for all $e \in E$
\item[iii)] $\mu\left(g \cdot e\right) = t \left(g\right)$ for all $g \in \G_1$ and $e \in E$.
\end{itemize}
\end{dfn}

\begin{dfn}
Suppose that $\G \acts E$ is a $\G$-space. Then the \textbf{action groupoid} $\G \ltimes E$ is defined to be the topological groupoid whose arrow space is $\G \times_{\G_0} E$ and whose object space is $E$. An element $$\left(g,e\right) \in \G \times_{\G_0} E \subset \G \times E$$ is viewed as an arrow from $e$ to $g \cdot e$. Composition is defined by the rule

$$\left(g,e\right) \cdot \left(h,g\cdot e\right)= \left(hg,e\right).$$
\end{dfn}

\begin{dfn}\label{dfn:tran}
Given a topological groupoid $\G$, the \textbf{translation groupoid} $\Ee \G$ is defined to be the action groupoid $\G \ltimes \G_1$ with respect to the action of $\G$ on $\G_1$ by multiplication.
\end{dfn}

\begin{dfn}
A (left) \textbf{$\G$-bundle} over a space $X$  (with respect to $J$) is a (left) $\G$-space $P$ equipped with $\G$-invariant \textbf{projection map} $$\pi:P \to X$$ which \textbf{admits local sections} with respect to the Grothendieck topology $J$. This last condition means that there exists a covering family $\mathcal{U}=\left(U_i \to X\right)_i$ in $J$ and morphisms $\sigma_i:U_i \to P$ called \textbf{local sections} such that the following diagram commutes for all $i$:

$$\xymatrix{& P \ar[rd]^-{f} &\\
U_i \ar[ru]^-{\sigma_i}  \ar[rr] & & X.}$$
\label{dfn:locs}

This condition is equivalent to requiring that the projection map is a $J$-cover.

Such a $\G$-bundle is called ($J$)-\textbf{principal} if the induced map, $$\G_1 \times_{\G_0} P \to P \times_{X} P$$ is a homeomorphism.
\end{dfn}

We typically denote such a principal bundle as

$$\xymatrix @R=2pc @C=0.15pc {\G_1  \ar@<+.7ex>[d] \ar@<-.7ex>[d] & \acts & P \ar_{\mu}[lld] \ar^{\pi}[d] \\
\G_0 && X.}$$

\begin{rmk}
To ease terminology, for the rest of this section, the term principal bundle, will refer to a $J$-principal bundle for our fixed topology $J$.
\end{rmk}

\begin{dfn}\label{dfn:unitbundle}
Any topological groupoid $\G$ determines a principal $\G$-bundle over $\G_0$ by

$$\xymatrix @R=2pc @C=0.15pc {\G_1  \ar@<+.7ex>[d] \ar@<-.7ex>[d] & \acts & \G_1 \ar_{t}[lld] \ar^{s}[d] \\
\G_0 && \G_0}$$
called the \textbf{unit bundle}, $1_\G$.
\end{dfn}

\begin{dfn}
Given $P$ and $P'$, two principal $\G$-bundle over $X$, a continuous map $f:P \to P'$ is a \textbf{map of principal bundles} if it respects the projection maps and is $\G$-equivariant. It is easy to check that any such map must be an isomorphism of principal bundles.
\end{dfn}

\begin{dfn}
Let $f:Y \to X$ be a map and suppose that $P$ is a principal $\G$-bundle over $X$. Then we can give $Y\times_{X} P \to Y$ the structure of a principal $\G$-bundle $f^*\left(P\right)$ over $Y$, called the \textbf{pull-back bundle}, in the obvious way.
\end{dfn}

\begin{dfn}\label{dfn:gaugegpd}
If $\G$ is topological groupoid and $P$ is a principal $\G$-bundle over $X$, then we define the \textbf{gauge groupoid} $Gauge\left(P\right)$ to be the following topological groupoid:

The fibered product

$$\xymatrix{P \times_{\G_0} P \ar[r] \ar[d] & P \ar[d]^-{\mu}\\
P \ar[r]^-{\mu} & \G_0}$$
admits a left-$\G$-action with moment map $\tilde \mu\left(\left(p,q\right)\right)=\mu\left(p\right)$ via

$$g \cdot \left(p,q\right)=\left(g\cdot p,g \cdot q\right).$$
The arrow space of $Gauge\left(P\right)$ is the quotient ${\raise0.7ex\hbox{$P \times_{\G_0} P$} \!\mathord{\left/
 {\vphantom {P \times_{\G_0} P \G}}\right.\kern-\nulldelimiterspace}
\!\lower0.7ex\hbox{$\G$}}$ and the object space is $X$. An equivalence class $\left[\left(p,q\right)\right]$ is viewed as an arrow from $\pi\left(p\right)$ to $\pi\left(q\right)$ (which is well defined as $\pi$ is $\G$-invariant.) Composition is determined by the rule

$$\left[\left(p,q\right)\right] \cdot \left[\left(q',r\right)\right]=\left[\left(p,g \cdot r\right)\right]$$
where $g$ is the unique element of $\G_1$ such that $g \cdot q'=q$.
\end{dfn}

\begin{dfn}\label{dfn:bibundle}
Let $\G$ and $\h$ be topological groupoids. A (left) \textbf{principal $\G$-bundle over $\h$} is a (left) principal $\G$-bundle

$$\xymatrix @R=2pc @C=0.15pc {\G_1  \ar@<+.7ex>[d] \ar@<-.7ex>[d] & \acts & P \ar_{\mu}[lld] \ar^{\nu}[d] \\
\G_0 && \h_0}$$
over $\h_0$, such that $P$ also has the structure of a right $\h$-bundle with moment map $\nu$, with the $\G$ and $\h$ actions commuting in the obvious sense. We typically denote such a bundle by

$$\xymatrix @R=2pc @C=0.15pc {\G_1  \ar@<+.7ex>[d] \ar@<-.7ex>[d] & \acts & P \ar_{\mu}[lld] \ar^{\nu}[rrd] & \acted & \h_1 \ar@<+.7ex>[d] \ar@<-.7ex>[d]\\
\G_0 & & & &\h_0.}$$
\end{dfn}

\begin{rmk}
To a continuous functor $\varphi:\h  \to \G$, one can canonically associate a principal $\G$-bundle over $\h$. It is defined by putting the obvious (right) $\h$-bundle structure on the total space of the pullback bundle $\varphi_0^*\left(1_\G\right)=\G_1 \times_{\G_0} \h_0$.
\end{rmk}

\subsection{Bicategories of Topological Groupoids}
\begin{dfn}\label{dfn:JMor}
A continuous functor $\varphi:\h \to \G$ between two topological groupoids is a $J$-\textbf{Morita equivalence} if the following two properties hold:

\begin{itemize}
\item[i)] (Essentially Surjective)

The map $t \circ pr_1:\G_1 \times_{\G_0} \h_0 \to \G_0$ admits local sections with respect to the topology $J$, where $\G_1 \times_{G_0} \h_0$ is the fibered product

$$\xymatrix{\G_1 \times_{G_0} \h_0 \ar[r]^-{pr_2} \ar[d]_-{pr_1} & \h_0 \ar[d]^-{\varphi} \\
\G_1 \ar[r]^-{s} & \G_0.}$$
\item[i)] (Fully Faithful)

The following is a fibered product

$$\xymatrix{\h_1 \ar[r]^-{\varphi} \ar[d]_-{\left\langle {s,t} \right\rangle} &\G_1 \ar[d]^-{\left\langle {s,t} \right\rangle}  \\
\h_0 \times \h_0 \ar[r]^-{\varphi \times \varphi} & \G_0 \times \G_0.}$$
\end{itemize}
\end{dfn}

\begin{rmk}
If $\mathcal{U}$ is a $J$-cover of the object space $\G_0$ of a topological groupoid $\G$, then the induced map $\G_{\mathcal{U}} \to \G$ is a $J$-Morita equivalence.
\end{rmk}

\begin{rmk}
We will again suppress the reference to the Grothendieck topology $J$; for the rest of the section a Morita equivalence will implicitly mean a $J$-Morita equivalence for our fixed topology $J$. A Morita equivalence with respect to the open cover topology will be called an ordinary Mortia equivalence.
\end{rmk}

\begin{rmk}
The property of being a Morita equivalence is weaker than being an equivalence in the bicategory $\T Gpd$. In fact, a Morita equivalence is an equivalence in $\T Gpd$ if and only if $t \circ pr_1$ admits a global section. However, any Morita equivalence does induce an equivalence in the bicategory $Gpd$ after applying the forgetful $2$-functor. Morita equivalences are sometimes referred to as weak equivalences.
\end{rmk}

We denote by $W_J$ the class of Morita equivalences. The class $W_J$ admits a right calculus of fractions in the sense of \cite{Dorette}. There is a bicategory $\T Gpd\left[W_J^{-1}\right]$ obtained from the bicategory $\T Gpd$ by formally inverting the Morita equivalences. A $1$-morphism from $\h$ to $\G$ in this bicategory is a diagram of continuous functors

$$\xymatrix{&\K \ar[ld]_{w} \ar[rd]&\\
\h & & \G}$$
such that $w$ is a Morita equivalence.
\begin{dfn}\label{dfn:genhom}
Such a diagram is called a \textbf{generalized homomorphism}.
\end{dfn}
There is also a well defined notion of a $2$-morphism. For details see \cite{Dorette}.

There is in fact another bicategory which one can construct from the bicategory $\T Gpd$. This bicategory, denoted by $Bun^J \T Gpd$ again has the same objects as $\T Gpd$. A $1$-morphism between two topological groupoids $\h$ and $\G$ is a left principal $\G$-bundle over $\h$ (with respect to $J$). There is a well defined way of composing principal bundles, for details see \cite{Poisson} or \cite{HSMaps}. A $2$-morphism between two such principal bundles is a biequivariant map (a map which is equivariant with respect to the left $\G$-action and the right $\h$-action). A principal $\G$-bundle over $\h$ is an equivalence in this bicategory if and only if the underlying $\h$-bundle is also principal.

By the remark following definition \ref{dfn:bibundle}, there is a canonical inclusion $$\T Gpd \hookrightarrow Bun^J \T Gpd$$ which sends Morita equivalences to equivalences. Therefore, there is an induced  map $$\T Gpd\left[W_J^{-1}\right] \to Bun^J \T Gpd$$ of bicategories.

\begin{thm}\label{thm:bunmor}
The induced map $$\T Gpd\left[W_J^{-1}\right] \to Bun^J \T Gpd$$ is an equivalence of bicategories.
\end{thm}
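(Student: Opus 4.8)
The plan is to exhibit an explicit pseudo-inverse to the induced map $\T Gpd\left[W_J^{-1}\right] \to Bun^J \T Gpd$ and check that it is well-defined on 2-cells and compatible with composition. First I would recall the two constructions relating the models. In one direction, a generalized homomorphism $\h \xleftarrow{w} \K \xrightarrow{\psi} \G$ is sent to the composite principal $\G$-bundle over $\h$ obtained by composing the $\G$-bundle over $\K$ associated to $\psi$ (namely $\psi_0^*(1_\G) = \G_1 \times_{\G_0} \K_0$, as in the remark following Definition \ref{dfn:bibundle}) with the principal $\K$-bundle over $\h$ coming from $w$; the fact that $w$ is a $J$-Morita equivalence is exactly what makes this $\K$-bundle principal, and so the composite is a genuine left-principal $\G$-bundle over $\h$. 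In the other direction, given a left-principal $\G$-bundle $P$ over $\h$ with moment maps $\mu:P\to\G_0$ and $\nu:P\to\h_0$, one forms the topological groupoid $\K_P$ whose object space is $P$ and which sits in a span $\h \xleftarrow{w_P} \K_P \xrightarrow{\psi_P} \G$: here $\psi_P$ is built from the $\G$-action (the arrow space maps to $\G_1$ via the principality homeomorphism $\G_1\times_{\G_0}P \cong P\times_\h P$), and $w_P$ records $\nu$. The condition that $P$ admits local sections with respect to $J$ is precisely what makes $w_P$ a $J$-Morita equivalence, so this span is a legitimate generalized homomorphism.

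Next I would verify that these assignments are mutually inverse up to coherent 2-isomorphism, and that they extend to the 2-cells of each bicategory. Starting from a span and passing to the associated bundle and back should return an isomorphic span; the comparison groupoid is the fibred product of $\K$ with $\K_P$ over $\h$ and $\G$, and one checks directly that both projection functors are Morita equivalences, which gives the required invertible 2-cell in $\T Gpd\left[W_J^{-1}\right]$. Conversely, the bundle obtained from $\K_P$ is canonically isomorphic to $P$ as a bi-bundle. For 2-cells, a 2-morphism of spans induces a biequivariant map of the associated composite bundles, and a biequivariant map between composite bundles can be traced back to a 2-morphism of spans; both checks are diagram chases using the universal properties of the fibred products involved. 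Finally one confirms the assignment is (pseudo-)functorial: it sends the composite of two spans to the composite of the corresponding bundles, up to the coherence isomorphisms already built into the bicategory structures on both sides — this is where the associativity constraints of principal-bundle composition (from \cite{Poisson} or \cite{HSMaps}) must be matched against those of the calculus of fractions (from \cite{Dorette}).

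Since the induced map $\T Gpd\left[W_J^{-1}\right] \to Bun^J \T Gpd$ is already given, it suffices to show it is essentially surjective and fully faithful as a homomorphism of bicategories, i.e. essentially surjective on objects (immediate, both have the same objects) and an equivalence on each hom-category $\Hom(\h,\G)$. So an alternative, slightly more economical organization is: show the functor of hom-categories $\Hom_{\T Gpd[W_J^{-1}]}(\h,\G) \to \Hom_{Bun^J\T Gpd}(\h,\G)$ is essentially surjective (every principal $\G$-bundle over $\h$ arises from a span, via the $\K_P$ construction above) and fully faithful (2-morphisms of spans correspond bijectively to biequivariant maps). I would carry the proof out in this second form, as it isolates exactly what must be checked.

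The main obstacle I expect is not any single construction — each of the spans, bundles, and comparison maps above is more or less forced — but the bookkeeping of coherence: verifying that the pseudo-inverse respects composition and the associativity/unit constraints of both bicategories simultaneously. In particular, proving that $w_P$ and the various comparison functors are genuinely $J$-Morita equivalences requires using the defining condition of a $J$-principal bundle (the homeomorphism $\G_1\times_{\G_0}P \cong P\times_\h P$ together with the $J$-local-section property of $\pi$) in a way that is uniform in $J$, which is why the statement is phrased for an arbitrary subcanonical $J$; no special feature of the open-cover topology is used. For a paper at this level one would reasonably cite \cite{Poisson}, \cite{HSMaps}, and \cite{Dorette} for the bicategorical identities and present only the construction of the pseudo-inverse together with the verification that the Morita-equivalence conditions hold, leaving the coherence diagram chases to the reader.
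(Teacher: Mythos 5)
Your overall strategy is sound, but it is worth noting that the paper itself does not prove this theorem at all: it records that the statement is well known, citing \cite{Poisson}, Section 2.6, for a 1-categorical version and \cite{Dorette}, Section 3.4, for the bicategorical result. So what you are proposing is a self-contained version of the standard argument that those references carry out, and your second, ``hom-wise'' organization (the induced map is the identity on objects, so it suffices to show each functor $\Hom_{\T Gpd\left[W_J^{-1}\right]}\left(\h,\G\right) \to \Hom_{Bun^J \T Gpd}\left(\h,\G\right)$ is an equivalence of categories) is exactly the right way to isolate what must be checked.

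The one step that would fail as you have written it is the construction of $\K_P$. You take its object space to be $P$ and its arrow space to be, in effect, $P\times_{\h_0}P\cong \G_1\times_{\G_0}P$ (the relative pair groupoid of $\nu$), with $w_P$ ``recording $\nu$''. That groupoid has arrows only over identity arrows of $\h$, so $w_P$ cannot be fully faithful in the sense of Definition \ref{dfn:JMor} whenever $\h$ has non-identity arrows: the required fibred product $\left(\K_{P,0}\times\K_{P,0}\right)\times_{\h_0\times\h_0}\h_1$ is strictly larger than $P\times_{\h_0}P$. The correct intermediate groupoid is the double action groupoid $\G\ltimes P\rtimes\h$, with arrow space $\G_1\times_{\G_0}P\times_{\h_0}\h_1$; then the $J$-local sections of $\nu$ give essential surjectivity of the projection $w_P$ to $\h$, the principality homeomorphism $\G_1\times_{\G_0}P\cong P\times_{\h_0}P$ gives its full faithfulness, and the projection to $\G$ supplies $\psi_P$. (Alternatively, one can avoid any new groupoid: choose a $J$-covering family $\mathcal{U}$ of $\h_0$ over which $\nu$ admits sections and extract from them a cocycle, i.e.\ a functor $\h_{\mathcal{U}}\to\G$, whose associated bundle is isomorphic to $P$; this proves essential surjectivity on hom-categories directly.) With that correction, the remainder of your outline --- the bijection between 2-cells of spans and biequivariant maps, and the coherence bookkeeping, which can reasonably be delegated to \cite{Dorette} and \cite{Poisson}, \cite{HSMaps} --- goes through and is uniform in the subcanonical topology $J$, as needed for the paper's later application to $\cgt$.
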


This theorem is well known. A $1$-categorical version of this theorem is proven in \cite{Poisson}, Section 2.6. The general result follows easily from \cite{Dorette}, Section 3.4.

\subsection{Topological Stacks}\label{app:topst}

Consider the bicategory $Gpd^{\T^{op}}$\ of weak presheafs over $\T$ \footnote{Technically, this is not well defined due to set-theoretic issues, however, this can be overcame by careful use of Grothendieck universes. We will not dwell on this and all such similar size issues in this paper.}, that is contravariant (possibly weak) $2$-functors from the category $\T$ into the $2$-categeory of (essentially small) groupoids $Gpd$. Let $G$ be a topological group. A standard example would be the weak presheaf that assigns to each space the category of principal $G$-bundles over that space (this category is a groupoid). More generally, let $\G$ be a topological groupoid. Then $\G$ determines a weak presheaf on $\T$ by the rule
\begin{equation*}
X \mapsto \Hom_{\T Gpd}\left(\left(X\right)^{(id)},\G\right).
\end{equation*}

This defines an extended Yoneda $2$-functor $\tilde y:\T Gpd \to Gpd^{\T^{op}}$ and we have the obvious commutative diagram

$$\xymatrix{\T  \ar[d]_{\left(\mspace{2mu} \cdot \mspace{2mu}\right)^{(id)}} \ar[r]^{y} & \Set^{\T^{op}} \ar^{\left(\mspace{2mu} \cdot \mspace{2mu}\right)^{(id)}}[d]\\
\T Gpd \ar_{\tilde y}[r] & Gpd^{\T^{op}}.}$$

\begin{rmk}
$\tilde y$ preserves all weak limits.
\label{lem:ylim}
\end{rmk}

Given a subcanonical Grothendieck topology $J$ on $\T$, we denote by $\left[\G\right]_J$ the associated stack on $\left(\T,J\right)$, $a_J \circ \tilde y\left(\G\right)$, where $a_J$ is the stackification $2$ functor (Definition \ref{dfn:stackification}).

It can be checked that

\begin{equation*}
\left[\G\right]_J \left(X\right) \simeq \underset{\mathcal{U} \in Cov\left(X\right)} \hc \Hom_{\T Gpd} \left(X_{\mathcal{U}},\G\right),
\end{equation*}
where the weak $2$-colimit above is taken over a suitable bicategory of $J$-covers. For details in the case $J$ is the open cover topology, see \cite{Andre}.

\begin{rmk}
Since $a_J$ preserves finite weak limits, it follows that $\left[\mspace{2mu} \cdot \mspace{2mu}\right]_J$ does as well.
\end{rmk}

\begin{dfn}
A stack $\X$ on $\left(\T,J\right)$ is \textbf{presentable} if it is equivalent to $\left[\G\right]_J$ for some topological groupoid $\G$. In this case, $\G$ is said to be a \textbf{presentation} of $\X$.

We denote the full sub-bicategory of $St_J\left(\T\right)$ consisting of presentable stacks by $\Prst$.
\label{dfn:presentable}
\end{dfn}

\begin{dfn}\label{dfn:tst}
A \textbf{topological stack} is a presentable stack for the open cover topology on $\T$. We shall denote the topological stack associated to a topological groupoid $\G$ by $\left[\G\right]$. We shall also denote the bicategory of topological stacks by $\Tst$.
\end{dfn}

\begin{thm}
The $2$-functor

$$a_{J} \circ \tilde y:\T Gpd \to \Prst$$
induces an equivalence of bicategories

$$P_J:\T Gpd \left[W_J^{-1}\right] \stackrel{\sim}{\longrightarrow} \Prst.$$
\label{thm:bicatp1}
\end{thm}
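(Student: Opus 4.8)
The plan is to prove Theorem \ref{thm:bicatp1} by showing that $St_J \circ \tilde y$ inverts Morita equivalences, so that it factors through the bicategory of fractions $\T Gpd\left[W_J^{-1}\right]$, and then to check that the induced functor $P_J$ is a biequivalence by verifying essential surjectivity on objects, and full faithfulness on hom-groupoids. First I would recall that $\T Gpd\left[W_J^{-1}\right]$ is, by Theorem \ref{thm:bunmor}, equivalent to $Bun^J\T Gpd$, the bicategory with left principal bundles as $1$-morphisms; it is often more convenient to argue with the latter, since composing with $St_J \circ \tilde y$ there amounts to computing $\left[\mspace{2mu}\cdot\mspace{2mu}\right]_J$ on honest bundle data. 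So the real content is: (a) $\left[\mspace{2mu}\cdot\mspace{2mu}\right]_J$ sends $J$-Morita equivalences to equivalences of stacks, and (b) for any two topological groupoids $\h,\G$, the induced map
$$\Hom_{Bun^J\T Gpd}\left(\h,\G\right) \;\longrightarrow\; \Hom_{St_J\left(\T\right)}\left(\left[\h\right]_J,\left[\G\right]_J\right)$$
is an equivalence of groupoids, and (c) every presentable stack is in the image.

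For step (a), the key point is that if $\varphi:\h\to\G$ is a $J$-Morita equivalence then the associated principal $\G$-bundle over $\h$ is invertible in $Bun^J\T Gpd$ (its underlying $\h$-bundle is principal exactly because $\varphi$ is fully faithful and essentially surjective in the $J$-sense), and $St_J\circ\tilde y$ visibly sends equivalences in $Bun^J\T Gpd$ to equivalences of stacks. A cleaner route: show directly from the colimit formula $\left[\G\right]_J(X)\cong \hc_{\mathcal{U}\in Cov(X)}\Hom_{\T Gpd}(X_{\mathcal U},\G)$ that a Morita equivalence $\h\to\G$ induces, for each $X$, an equivalence on descent groupoids — this uses that $X_{\mathcal U}$ has the "right" kind of object space for the fully-faithful/essentially-surjective conditions to take effect, together with the fact that refining covers is cofinal. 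Step (c) is immediate from Definition \ref{dfn:presentable}: $\Prst$ is defined as the essential image of $St_J\circ\tilde y$, so $P_J$ is essentially surjective by construction.

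The main obstacle is step (b), full faithfulness on hom-groupoids. For this I would use the bundle model and the Yoneda-type computation: a $1$-morphism $\left[\h\right]_J\to\left[\G\right]_J$ of stacks, restricted along an atlas $\h_0\to\left[\h\right]_J$, produces (by the $2$-Yoneda lemma applied to the representable $\h_0$) an object of $\left[\G\right]_J(\h_0)$, i.e. a principal $\G$-bundle over $\h_0$ after choosing a cover, together with descent data over $\h_1$ expressing compatibility — which is precisely a principal $\G$-bundle over the groupoid $\h$. Conversely such bundle data glues to a morphism of stacks because $\left[\G\right]_J$ is a stack. One must check these two constructions are mutually inverse up to coherent $2$-isomorphism and that the same bookkeeping works one categorical level up for $2$-morphisms (biequivariant maps $\leftrightarrow$ $2$-morphisms of stacks). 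This is a somewhat lengthy but essentially formal descent argument; the subtle points are the cofinality of covers in the indexing $2$-category of the weak colimit, and checking that the local-section condition built into "principal bundle with respect to $J$" matches exactly the epimorphism condition for $J$-stacks. I would remark that for $J$ the open cover topology this is exactly the classical equivalence (proved e.g. in \cite{Poisson}, \cite{Dorette}, \cite{NoohiF}), and that the argument is insensitive to the choice of subcanonical $J$, so it applies verbatim to $\cgt$; this last observation is what makes the theorem reusable as Theorems \ref{thm:bicatcg1} and \ref{thm:bicatcg2}.
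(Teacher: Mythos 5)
Your outline is correct, but it is worth noting that the paper does not actually prove Theorem \ref{thm:bicatp1}: it declares the result well known and delegates it to the literature, citing \cite{Dorette} (Section 3.4) for the localization machinery, \cite{DiffGerb} for the ``dictionary lemma'' computations in the Lie case, and \cite{Naum} for the algebraic analogue, with the remark that the general subcanonical-$J$ case follows by the same easy application of \cite{Dorette}. What you propose is essentially a reconstruction of the content of those citations: step (a) is exactly the observation that the bibundle attached to a functor $\varphi:\h \to \G$ has principal underlying $\h$-bundle precisely when $\varphi$ satisfies the two conditions of Definition \ref{dfn:JMor} (your identification of ``admits local $J$-sections of $t \circ pr_1$'' with essential surjectivity and of the pullback square with full faithfulness matches the paper's definitions verbatim), step (c) is indeed immediate from Definition \ref{dfn:presentable}, and step (b) is the descent argument underlying the dictionary lemmas. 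The trade-off is the usual one: the paper's route, via Pronk's universal property of the bicategory of fractions, is short but outsourced; yours is self-contained but must carry the bookkeeping you acknowledge. Two small points if you write it out: in step (b) the descent data over $\h_1$ must also be required to satisfy the cocycle condition over $\h_1 \times_{\h_0} \h_1$ (this is where Lemma \ref{lem:319} enters), and the identification of $\left[\G\right]_J\left(T\right)$ with the groupoid of $J$-principal $\G$-bundles over $T$, which you use implicitly when restricting along the atlas $\h_0$, appears in the paper only as a corollary of this very theorem, so you should derive it directly from the colimit formula for $\left[\G\right]_J$ to keep the argument non-circular.
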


This theorem is well known. For example, see \cite{Dorette} for the case of \'etale topological groupoids and topological stacks with an \'etale atlas, and also for \'etale Lie groupoids and \'etale differentiable stacks. The preprint \cite{diffg} contains much of the necessary ingredients for the proof of the case of general Lie groupoids and differentiable stacks in its so called Dictionary Lemmas. Similar statements in the case of algebraic stacks can be found in \cite{Naum}. The general theorem follows again from an easy application of \cite{Dorette}, Section 3.4.

Hence all three bicategories, $\Prst$, $\T Gpd \left[W_J^{-1}\right]$ and $Bun^J\T Gpd$ are equivalent.

\begin{cor}
If $\G$ is a topological groupoid, then the associated presentable stack $\left[\G\right]_J$ (is equivalent to one that) assigns to each topological space $T$, the groupoid of principal $\G$-bundles over $T$, $Bun^J_{\G}\left(T\right)$.
\end{cor}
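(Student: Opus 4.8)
The plan is to read the statement off the chain of biequivalences
$$\Prst \;\simeq\; \T Gpd\left[W_J^{-1}\right] \;\simeq\; Bun^J\T Gpd$$
furnished by Theorems \ref{thm:bicatp1} and \ref{thm:bunmor}, combined with the $2$-Yoneda Lemma. Fix a topological space $T$. Since $J$ is subcanonical, the representable prestack $\left(T\right)^{(id)}$ is already a stack, so $\left[\left(T\right)^{(id)}\right]_J \cong \left(T\right)^{(id)}$, and the $2$-Yoneda Lemma gives a natural equivalence
$$\left[\G\right]_J\left(T\right) \;\cong\; \Hom_{St_J\left(\T\right)}\left(\left(T\right)^{(id)},\left[\G\right]_J\right) \;=\; \Hom_{\Prst}\left(\left[\left(T\right)^{(id)}\right]_J,\left[\G\right]_J\right).$$
Applying the inverse of the biequivalence $P_J$ of Theorem \ref{thm:bicatp1} and then the biequivalence of Theorem \ref{thm:bunmor} turns the right-hand side into
$$\Hom_{\T Gpd\left[W_J^{-1}\right]}\left(\left(T\right)^{(id)},\G\right) \;\cong\; \Hom_{Bun^J\T Gpd}\left(\left(T\right)^{(id)},\G\right).$$

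The next step is to identify this last Hom-groupoid with $Bun^J_\G\left(T\right)$ by unwinding definitions. A $1$-morphism $\left(T\right)^{(id)} \to \G$ in $Bun^J\T Gpd$ is a left principal $\G$-bundle over the topological groupoid $\left(T\right)^{(id)}$; because the arrow space of $\left(T\right)^{(id)}$ is $T$ with every structure map the identity, the right $\left(T\right)^{(id)}$-action is uniquely determined and carries no information, so such a $1$-morphism is precisely a left principal $\G$-bundle over the space $T$ (Definition \ref{dfn:locs}), the local-section condition being taken with respect to $J$ in both cases. Similarly a $2$-morphism is a biequivariant map, which reduces to a $\G$-equivariant map over $T$, i.e. (necessarily) an isomorphism of principal $\G$-bundles over $T$. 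Hence $\Hom_{Bun^J\T Gpd}\left(\left(T\right)^{(id)},\G\right) \cong Bun^J_\G\left(T\right)$, and therefore $\left[\G\right]_J\left(T\right) \simeq Bun^J_\G\left(T\right)$.

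It then remains to check that this equivalence is natural in $T$, i.e. that it assembles into an equivalence of prestacks; since $\left[\G\right]_J$ is a stack, it follows that $T \mapsto Bun^J_\G\left(T\right)$ is a stack equivalent to $\left[\G\right]_J$, which is the assertion. All the identifications above are natural: the $2$-Yoneda equivalence is natural in $T$, and the pseudofunctors implementing the two biequivalences send the morphism $\left(f\right)^{(id)}:\left(T'\right)^{(id)} \to \left(T\right)^{(id)}$ induced by $f:T' \to T$ to precomposition, which under the final identification is exactly the pullback $f^{*}$ of principal bundles, i.e. the restriction map of the prestack $T \mapsto Bun^J_\G\left(T\right)$.

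The only real work is this last piece of bookkeeping: verifying that the chain of equivalences is compatible with the structure maps and with the associativity and unit coherence data of the bicategories involved, so that it genuinely defines a morphism of prestacks rather than merely a pointwise family of equivalences. I expect this coherence check (and the careful identification of composition in $Bun^J\T Gpd$ with pullback of bundles) to be the main obstacle; everything else is a direct translation. Once it is in place, a morphism of prestacks that is a pointwise equivalence of groupoids is an equivalence of stacks, completing the proof. (Alternatively, one could argue more hands-on, using $\left[\G\right]_J\left(T\right) \cong \hc_{\mathcal{U}} \Hom_{\T Gpd}\left(T_{\mathcal{U}},\G\right)$ and recognizing the indexed descent data as gluing data for principal $\G$-bundles on a $J$-cover, but the route through the biequivalences is cleaner.)
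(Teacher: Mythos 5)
Your argument is correct and is exactly the route the paper intends: the corollary is stated there as an immediate consequence of the chain of biequivalences $\Prst \simeq \T Gpd\left[W_J^{-1}\right] \simeq Bun^J\T Gpd$ from Theorems \ref{thm:bicatp1} and \ref{thm:bunmor}, combined with the 2-Yoneda Lemma and the observation that a principal $\G$-bundle over $\left(T\right)^{(id)}$ is just a principal $\G$-bundle over $T$. Your extra care about naturality in $T$ and the coherence of the identifications is a reasonable elaboration of what the paper leaves implicit, so no gap.
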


\begin{dfn}\cite{Metzler}\label{dfn:covmor}
A morphism $\varphi:\X \to \Y$ between weak presheaves is said to be a $J$-\textbf{covering morphism} if for every space $X$ and every object $x \in \Y\left(X\right)_0$, there exists a $J$-covering family $\mathcal{U}=\left(f_i:U_i \to X\right)_i$ of $X$ such that for each $i$ there exists an object $x_i \in \X(U_i)_0$ and a(n) (iso)morphism $$\alpha_i:\varphi\left(\X\left(f_i\right)\left(x_i\right)\right) \to \Y\left(f_i\right)\left(x\right).$$

If $\X$ and $\Y$ are both $J$-stacks, $J$-covering morphisms are referred to as $J$-\textbf{epimorphisms}\cite{Metzler},\cite{NoohiF}.
\end{dfn}

There is a more intrinsic description of presentable stacks:

\begin{dfn}
A $J$-\textbf{atlas} for a stack $\X$ over $\left(\T,J\right)$ is a \emph{representable} (see Definition \ref{dfn:repr}) $J$-epimorphism $p:X \to \X$ from a topological space $X$.
\end{dfn}

\begin{prop}\cite{NoohiF}
A stack $\X$ is presentable if and only if it has an atlas.
\label{prop:atlas}
\end{prop}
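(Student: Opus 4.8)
The plan is to prove the two implications separately, using the description (from the corollary to Theorem~\ref{thm:bicatp1}) that $\left[\G\right]_J(T)$ is the groupoid $\operatorname{Bun}^{J}_{\G}(T)$ of $J$-principal $\G$-bundles over $T$.

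Suppose first that $\X \cong \left[\G\right]_J$. I claim the canonical morphism $p\colon \G_0 \to \left[\G\right]_J$ — which sends a map $f\colon T \to \G_0$ to the pulled-back unit bundle $f^*(1_\G)$ — is a $J$-atlas. That $p$ is a $J$-epimorphism follows from the definition of a $\G$-bundle: given $x \in \left[\G\right]_J(T)$, i.e.\ a $J$-principal $\G$-bundle $\pi\colon P \to T$, the map $\pi$ admits local sections over some $J$-cover $\left(U_i \to T\right)_i$ (Definition~\ref{dfn:locs}), and composing each section with the moment map $\mu\colon P \to \G_0$ yields maps $U_i \to \G_0$ whose classified unit bundles are isomorphic to $x|_{U_i}$; hence $p$ is a $J$-covering morphism, and so a $J$-epimorphism. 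For representability (Definition~\ref{dfn:repr}), one checks by a direct computation that for any $T \to \X$ classifying a bundle $P$, the weak fibre product $T \times_{\X} \G_0$ is represented by the total space $P$, with $P \to T$ the projection and $P \to \G_0$ the moment map: a map $S \to T \times_{\X} \G_0$ is the same as a map $S \to T$ together with a trivialization of the pulled-back bundle over $S$, i.e.\ a section, i.e.\ a map $S \to P$ over $T$.

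Conversely, suppose $p\colon X \to \X$ is a $J$-atlas. Since $p$ is representable and representability is stable under base change, $\G_1 := X \times_{\X} X$ and all iterated fibre products $X \times_{\X} X \times_{\X} X$, etc., are topological spaces. Taking the two projections as source and target, the diagonal $X \to X \times_{\X} X$ as unit, the swap map as inverse, and the map $X \times_{\X} X \times_{\X} X \to X \times_{\X} X$ induced by the two outer projections as composition, one obtains a topological groupoid $\G$ with $\G_0 = X$; the groupoid axioms hold formally by the universal properties of the fibre products. A continuous functor $\left(T\right)^{id} \to \G$ is simply a continuous map $T \to X = \G_0$, so composing with $p$ defines a morphism of prestacks $\tilde y(\G) \to \X$, which descends to a morphism $\Phi\colon \left[\G\right]_J \to \X$ because $\X$ is a stack. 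By construction the composite $\G_0 = X \to \left[\G\right]_J \xrightarrow{\Phi} \X$ equals $p$, so $\Phi$ is a $J$-epimorphism, and $X \times_{\left[\G\right]_J} X \cong \G_1 \cong X \times_{\X} X$.

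The crux — and the main obstacle — is to upgrade $\Phi$ to an equivalence, i.e.\ to show that $\X$ is effectively the stackification of the prestack of $\G$-cocycles. Given $x \in \X(T)$, using that $p$ is a $J$-covering morphism, choose a $J$-cover $\left(T_i \to T\right)_i$, objects $\xi_i \in X(T_i) = \Hom(T_i,X)$, and isomorphisms $p(\xi_i) \cong x|_{T_i}$; on double overlaps these isomorphisms compose to give maps $T_{ij} \to X \times_{\X} X = \G_1$, and the cocycle identity on triple overlaps is automatic, so we obtain a generalized homomorphism $T_{\mathcal U} \to \G$, i.e.\ an object of $\left[\G\right]_J(T)$ mapping to $x$; this gives essential surjectivity of $\Phi(T)$. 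Full faithfulness is checked the same way, descending morphisms in $\X(T)$ to morphisms of $\G$-cocycles after passing to a common refinement. (Equivalently, since $X \times_{\left[\G\right]_J} X \cong X \times_{\X} X$, one may argue that $\Phi$ is simultaneously a representable $J$-epimorphism and a monomorphism, hence an equivalence.) All of this descent bookkeeping is legitimate precisely because $\X$ is a $J$-stack and $J$ is subcanonical, so the representable presheaves in play genuinely are sheaves.
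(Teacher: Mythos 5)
Your argument is correct and follows essentially the same route the paper (after the statement, citing \cite{NoohiF}) sketches: the canonical representable $J$-epimorphism $\G_0 \to \left[\G\right]_J$ gives the atlas for a presentable stack, and conversely an atlas $p:X\to\X$ yields the groupoid $X\times_{\X}X\rightrightarrows X$ whose associated stack is $\X$. You simply supply the details (local sections of principal bundles for the epimorphism, the total space as the fibre product for representability, and the descent argument for the equivalence) that the paper leaves to the reference.
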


Suppose that $p:X \to \X$ is an atlas. Consider the weak fibered product

$$\xymatrix{X \times_{\X} X \ar[d] \ar[r] & X \ar[d]^-{p}\\
X \ar[r]^-{p} & \X.}$$

Then $X \times_{\X} X \rightrightarrows X$ has the structure of a topological groupoid whose associated stack is $\X$.

Conversely, given a topological groupoid $\G$, the canonical map of groupoids $\left(\G_0\right)^{id} \to \G$ (which is the identity objects and $u$ on arrows)
produces a morphism $p:\G_0 \to \left[\G\right]_{J}$ which is a representable epimorphism.

We end this section by stating a technical lemma.

\begin{lem}\label{lem:319}
The presentable stack $\left[\G\right]_{J}$ is the weak colimit of the following diagram of representables:
$$\h_2 \rrrarrow \h_1 \rrarrow \h_0.$$
where the maps are the face maps of the truncated simplicial diagram.
\end{lem}

\begin{proof}
This is the content of \cite{NoohiF} Proposition 3.19.
\end{proof}

\end{alphasection}

\bibliography{Main}

\begin{thebibliography}{10}

\bibitem{SGA}
{\em Th\'eorie des topos et cohomologie \'etale des sch\'emas. {T}ome 3}.
\newblock Lecture Notes in Mathematics, Vol. 305. Springer-Verlag, Berlin,
  1973.
\newblock S{\'e}minaire de G{\'e}om{\'e}trie Alg{\'e}brique du Bois-Marie
  1963--1964 (SGA 4), Dirig{\'e} par M. Artin, A. Grothendieck et J. L.
  Verdier. Avec la collaboration de P. Deligne et B. Saint-Donat.

\bibitem{Aoki}
Masao Aoki.
\newblock Hom stacks.
\newblock {\em Manuscripta Math.}, 119(1):37--56, 2006.

\bibitem{diffg}
Kai Behrend and Ping Xu.
\newblock Differentiable stacks and gerbes, 2006.
\newblock \href{http://arxiv.org/abs/math/0605694}{arXiv:math/0605694}.

\bibitem{Chen}
Weimin Chen.
\newblock On a notion of maps between orbifolds. {I}. {F}unction spaces.
\newblock {\em Commun. Contemp. Math.}, 8(5):569--620, 2006.

\bibitem{Dold}
Albrecht Dold.
\newblock Partitions of unity in the theory of fibrations.
\newblock {\em Ann. of Math. (2)}, 78:223--255, 1963.

\bibitem{FGA}
Barbara Fantechi, Lothar G{\"o}ttsche, Luc Illusie, Steven~L. Kleiman, Nitin
  Nitsure, and Angelo Vistoli.
\newblock {\em Fundamental algebraic geometry}, volume 123 of {\em Mathematical
  Surveys and Monographs}.
\newblock American Mathematical Society, Providence, RI, 2005.
\newblock Grothendieck's FGA explained.

\bibitem{Andre}
David Gepner and Andr\'e Henriques.
\newblock Homotopy theory of orbispaces.
\newblock \href{http://arxiv.org/abs/math/0701916}{arXiv:math/0701916}, 2007.

\bibitem{AndreH}
André Haefliger.
\newblock On the space of morphisms between \'etale groupoids, 2007.
\newblock \href{http://arxiv.org/abs/0707.4673}{arXiv:0707.4673}.

\bibitem{Loop}
Ernesto Lupercio and Bernardo Uribe.
\newblock Loop groupoids, gerbes, and twisted sectors on orbifolds.
\newblock In {\em Orbifolds in mathematics and physics ({M}adison, {WI},
  2001)}, volume 310 of {\em Contemp. Math.}, pages 163--184. Amer. Math. Soc.,
  Providence, RI, 2002.

\bibitem{htt}
Jacob Lurie.
\newblock {\em Higher topos theory}, volume 170 of {\em Annals of Mathematics
  Studies}.
\newblock Princeton University Press, Princeton, NJ, 2009.

\bibitem{Ieke}
Saunders Mac~Lane and Ieke Moerdijk.
\newblock {\em Sheaves in geometry and logic}.
\newblock Universitext. Springer-Verlag, New York, 1994.
\newblock A first introduction to topos theory, Corrected reprint of the 1992
  edition.

\bibitem{Metzler}
David Metzler.
\newblock Topological and smooth stacks.
\newblock \href{http://arxiv.org/abs/math/0306176}{arXiv:math/0306176}, 2003.

\bibitem{Class}
Ieke Moerdijk.
\newblock Classifying toposes and foliations.
\newblock {\em Ann. Inst. Fourier (Grenoble)}, 41(1):189--209, 1991.

\bibitem{Weak}
Ieke Moerdijk.
\newblock On the weak homotopy type of \'etale groupoids.
\newblock In {\em Integrable systems and foliations/{F}euilletages et
  syst\`emes int\'egrables ({M}ontpellier, 1995)}, volume 145 of {\em Progr.
  Math.}, pages 147--156. Birkh\"auser Boston, Boston, MA, 1997.

\bibitem{Poisson}
Ieke Moerdijk and Janez Mr{\v{c}}un.
\newblock Lie groupoids, sheaves and cohomology.
\newblock In {\em Poisson geometry, deformation quantisation and group
  representations}, volume 323 of {\em London Math. Soc. Lecture Note Ser.},
  pages 145--272. Cambridge Univ. Press, Cambridge, 2005.

\bibitem{paraprod}
Kiiti Morita.
\newblock On the product of paracompact spaces.
\newblock {\em Proc. Japan Acad.}, 39:559--563, 1963.

\bibitem{HSMaps}
Janez Mr{\v{c}}un.
\newblock Stability and invariants of hilsum-skandalis maps.
\newblock \href{http://arxiv.org/abs/math/0506484}{arXiv:math/0506484v1}, 1996.

\bibitem{Naum}
Niko Naumann.
\newblock The stack of formal groups in stable homotopy theory.
\newblock {\em Adv. Math.}, 215(2):569--600, 2007.

\bibitem{NoohiF}
Behrang Noohi.
\newblock Foundations of topological stacks i.
\newblock \href{http://arxiv.org/abs:math/0503247}{arXiv:math/0503247}, 2005.

\bibitem{NoohiH}
Behrang Noohi.
\newblock Homotopy types of topological stacks.
\newblock \href{http://arxiv.org/abs/0808.3799}{arXiv:0808.3799}, 2010.

\bibitem{NoohiM}
Behrang Noohi.
\newblock Mapping stacks of topological stacks.
\newblock {\em J. Reine Angew. Math.}, 646:117--133, 2010.

\bibitem{Olsson}
Martin~C. Olsson.
\newblock {$\underline {\rm Hom}$}-stacks and restriction of scalars.
\newblock {\em Duke Math. J.}, 134(1):139--164, 2006.

\bibitem{Dorette}
Dorette Pronk.
\newblock Etendues and stacks as bicategories of fractions.
\newblock {\em Compositio Mathematica}, 102(3):243--303, 1996.

\bibitem{Steen}
N.~E. Steenrod.
\newblock A convenient category of topological spaces.
\newblock {\em Michigan Math. J.}, 14:133--152, 1967.

\end{thebibliography}
\bibliographystyle{hplain}
\end{document}